\theoremstyle{plain} \numberwithin{equation}{section}
\newtheorem{Theorem}{Theorem}
\newtheorem{Lemma}[Theorem]{Lemma}
\newtheorem{Proposition}[Theorem]{Proposition}
\newtheorem{Corollary}[Theorem]{Corollary}
\theoremstyle{remark}
\title[Fourier method]
{Fourier method for one dimensional Schr\"odinger operators with
singular periodic potentials}
\author{Plamen Djakov}
\author{Boris Mityagin}
\begin{document}

\address{Sabanci University, Orhanli,
34956 Tuzla, Istanbul, Turkey}

 \email{djakov@sabanciuniv.edu}

\address{Department of Mathematics,
The Ohio State University,
 231 West 18th Ave,
Columbus, OH 43210, USA} \email{mityagin.1@osu.edu}

\begin{abstract}
By using quasi--derivatives, we develop a Fourier method for
studying the spectral properties of one dimensional Schr\"odinger
operators with periodic singular potentials.
\end{abstract}

\maketitle

\section{Introduction}

Our goal in this paper is to develop a Fourier method for studying
the spectral properties (in particular, spectral gap asymptotics)
of the Schr\"odinger operator
\begin{equation}
\label{0.1}  L(v) y = -y^{\prime \prime} + v(x) y, \quad x \in \mathbb{R},
\end{equation}
where $v$ is a singular potential such that
\begin{equation}
\label{0.2}
v(x) = v(x+\pi), \quad v \in H^{-1}_{loc} (\mathbb{R}).
\end{equation}

 In the case where the potential $v$ is
a real $L^2 ([0,\pi])$--function, it is well known by the
Floquet--Lyapunov theory (see \cite{E,Kuch,MW,W}), that the
spectrum of $L$ is  absolutely continuous and has a band--gap
structure, i.e., it is a union of closed intervals separated by
{\em spectral gaps} $$ (-\infty, \lambda_0), \;
(\lambda^-_1,\lambda^+_1), \; (\lambda^-_2,\lambda^+_2),  \cdots,
(\lambda^-_n,\lambda^+_n), \cdots .$$ The points $(\lambda^\pm_n)
$ are defined by the spectra of (\ref{0.1}) considered on the
interval $[0,\pi],$ respectively, with periodic  (for even $n$)
and anti--periodic (for odd $n$) boundary conditions $(bc):$

(a) periodic $\quad Per^+: \quad y(\pi)= y(0), \;y^\prime (\pi)=
y^\prime (0);   $

(b) antiperiodic $\quad  Per^-: \quad y(\pi)= - y(0), \;y^\prime
(\pi)= - y^\prime (0);
$

So,  one may consider the appropriate bases in $L^2 ([0,\pi]),$
which leads to a transformation of the periodic or anti--periodic
Hill--Schr\"odinger operator into an operator acting in an
$\ell^2$--sequence space.  This makes possible to develop a
Fourier method for investigation of spectra, and especially,
spectral gap asymptotics (see \cite{KM1,KM2}, where the method has
been
 used to estimate the gap asymptotics in terms of
potential smoothness). Our papers \cite{DM3,DM5} (see also the
survey \cite{DM15}) give further development of that approach and
provide  a detailed analysis of (and extensive bibliography on)
 the intimate relationship between the smoothness of the potential $v$ and
 the decay rate of the corresponding spectral gaps
(and deviations  of Dirichlet eigenvalues)
under the assumption  $v \in L^2 ([0,\pi]). $

But now singular potentials $v \in H^{-1} $ bring a lot
of new
technical problems even in the framework of the same basic
scheme as in \cite{DM15}.

First of them is to give proper understanding of the boundary
conditions (a) and (b) or their broader interpretation and careful
definition of the corresponding operators and their domains. This
is done by using quasi--derivatives. To a great extend we follow
the approach suggested (in the context of second order o.d.e.) and
developed by A. Savchuk and A. Shkalikov  \cite{SS00,SS03} (see
also \cite{SS01,SS05,SS06})
 and
 R. Hryniv and Ya. Mykytyuk \cite{HM01} (see also
\cite{HM02}-\cite{HM066}). For specific potentials see W. N.
Everitt and A. Zettl \cite{EZ79,EZ86}.

E. Korotyaev \cite{Kor03,Kor07} follows a different approach but
it works only in the case of a {\em real} potential $v.$

It is known (e.g., see \cite{HM01},   Remark 2.3, or
Proposition~\ref{prop01} below) that every $\pi$--periodic potential
$ v \in H^{-1}_{loc} (\mathbb{R}) $ has the form $$  v = C +
Q^\prime, \quad \text{where} \;\; C= const, \;\; Q \;\; \text{is} \;
\pi -\text{periodic}, \quad Q \in L^2_{loc} (\mathbb{R}). $$
Therefore, one may introduce  the ``quasi--derivative`` $u =
y^\prime -Qy $ and replace the distribution equation $-y^{\prime
\prime} + vy = 0 $ by the following system of two linear equations
with coefficients in $L^1_{loc} (\mathbb{R})$
\begin{equation}
\label{00.1}
y^\prime = Qy + u, \quad  u^\prime = (C-Q^2)y - Qu.
\end{equation}
By  the  Existence--Uniqueness theorem for systems of linear
o.d.e.  with $L^1_{loc} (\mathbb{R})$--coefficients (e.g., see
\cite{At,Naim}), the Cauchy initial value problem for the
system~(\ref{00.1}) has, for each pair of numbers $(a,b),$ a
unique solution $(y,u) $ such that $y(0) =a, \; u(0)= b.$

Moreover, following A. Savchuk and A. Shkalikov \cite{SS00,SS03},
one may consider various boundary value problems on the interval
$[0,\pi]).$  In particular,  let us consider the periodic or
anti--periodic boundary conditions $Per^\pm, $ where

($a^*$) $\quad   Per^+: \quad y(\pi)= y(0), \;\left
( y^\prime - Qy \right ) (\pi)= \left ( y^\prime - Qy  \right )
(0). $

($b^*$)  $\quad  Per^-: \quad y(\pi)= -y(0), \;\left
( y^\prime - Qy \right ) (\pi)= - \left ( y^\prime - Qy  \right )
(0). $

R. Hryniv and Ya. Mykytyuk \cite{HM01} used also the system
(\ref{00.1}) in order to give complete analysis of the spectra of
the Schr\"odinger operator with real--valued periodic
$H^{-1}$--potentials. They showed, that as in the case of periodic
$L^2_{loc} (\mathbb{R})$--potentials, the Floquet theory for the
system (\ref{00.1}) could be used to explain that if $v$ is
real--valued, then $L(v) $ is a self--adjoint operator having
absolutely continuous spectrum with band--gap structure, and the
spectral gaps are determined by the spectra of the corresponding
Hill--Schr\"odinger operators $L_{Per^\pm} $ defined in the
appropriate domains of  $L^2 ([0,\pi]) $--functions, and
considered, respectively, with the boundary conditions ($a^*$) and
($b^*$).

In Section 2 we use the same quasi--derivative approach to define
the domains of the operators $L(v)$ for complex--valued potentials
$v,$  and to explain how their spectra are described in terms of
the corresponding Lyapunov function. From a technical point of
view, our approach is different  from the approach of R. Hryniv
and Ya. Mykytyuk \cite{HM01}: they consider only the self--adjoint
case and use a quadratic form to define the domain of $L(v),$
while we consider the non--self--adjoint case as well and use the
Floquet theory and the resolvent method (see Lemma \ref{lem001}
and Theorem \ref{thm1}).

Sections 3 and 4 contains the core results of this paper. In
Section 3 we define and study the operators $L_{Per^\pm}$ which
arise when considering the Hill--Schr\"odinger operator $L(v)$
with the adjusted boundary conditions ($a^*$)
 and ($b^*$). We meticulously explain what is the Fourier
 representation of these
 operators\footnote{Maybe it is
worth to mention that T. Kappeler and C. M\"ohr
\cite{KMo} analyze "periodic and Dirichlet eigenvalues of
Schr\"odinger operators with singular potential" but they never
tell how these operators (or boundary conditions) are defined on
the interval, i.e., in a Hilbert space $L^2 ([0,\pi]).$ At some
point they jump without any justification  or explanation into
weighted $\ell^2 $--sequence spaces (an analog of Sobolev spaces
$H^{a} $)  and consider the same sequence space operators we are
used to in the regular case, i.e., if $v \in L^2_{per}
(\mathbb{R}).$ But without formulating which Sturm--Liouville
problem is considered, what are the corresponding boundary
conditions, what is the domain of the operator, etc.,  it is not
possible to pass from a {\it non-defined} differential operator to its
Fourier representation.}
 in Proposition~\ref{prop001} and Theorem~\ref{thm001}.

 In Section 4 we use the same approach as
in Section 3 to define and study the Hill--Schr\"odinger operator
$L_{Dir} (v) $  with Dirichlet  boundary conditions $ Dir: \;\;
y(0) = y(\pi ) = 0.$ Our main result there is Theorem~\ref{thm2.2}
which gives the Fourier representation  of the operator $L_{Dir}
(v). $

In Section 5 we use the Fourier representations of the operators $
L_{Per^\pm }$ and $L_{Dir} $ to study the localization of their
spectra (see Theorem~\ref{3.1}). Of course, Theorem~\ref{3.1}
gives also a rough asymptotics of the eigenvalues $\lambda^+_n,
\lambda^-_n, \mu_n $ of these operators. But we are interested to
find the asymptotics of spectral gaps $\gamma_n =\lambda^+_n -
\lambda^-_n$ in the self--adjoint case, or the asymptotics of both
$\gamma_n$ and the deviations $ \mu_n - (\lambda^+_n
+\lambda^-_n)/2 $ in the non--self--adjoint case, etc. Our results
in that direction are presented without proofs in
Section~6.\vspace{3mm}

{\em Acknowledgment.} The authors thank Professors Rostyslav
Hryniv, Andrei Shkalikov and Vadim Tkachenko for very useful
discussions of many questions of spectral analysis of differential
operators, both related and unrelated to the main topics of this
paper.

\section{Preliminary results}

1. The operator \ref{0.1} has a second term $v y$  with
$v \in (\ref{0.2}).$ First of all, let us specify the structure of
periodic functions and distributions in $H^1_{loc} (\mathbb{R})$
and  $H^{-1}_{loc} (\mathbb{R}).$

The Sobolev space  $H^1_{loc} (\mathbb{R})$ is defined as the
space of functions $f(x) \in  L^2_{loc} (\mathbb{R})$ which are
absolutely continuous and have their derivatives $f^\prime (x) \in
L^2_{loc} (\mathbb{R}).$ Therefore, for every $T>0, $
\begin{equation}
\label{0.3}
\|f\|_{1,T}^2 = \int_{-T}^T \left ( |f(x)|^2 + |f^\prime (x)|^2 \right ) < \infty.
\end{equation}

Let $\mathcal{D} (\mathbb{R}) $ be the space of all $C^\infty$--functions
on $\mathbb{R} $ with compact support, and let
$\mathcal{D} ([-T,T]) $  be the subset of all
$\varphi \in \mathcal{D} (\mathbb{R}) $ with
$supp \,\varphi \subset [-T,T].$

By definition, $H^{-1}_{loc} (\mathbb{R}) $ is the space of
distributions $v$ on $\mathbb{R} $ such that
\begin{equation}
\label{0.4} \forall T>0 \;\; \exists C(T): \quad |\langle v,
\varphi \rangle | \leq C(T) \|\varphi \|_{1,T} \quad \forall
\varphi \in \mathcal{D} ([-T,T]).
\end{equation}
Of course, since $$ \int_{-T}^T |\varphi (x)|^2 dx \leq T^2
\int_{-T}^T |\varphi^\prime (x)|^2 dx, $$ the condition
(\ref{0.4}) is equivalent to
\begin{equation}
\label{0.5} \forall T>0 \;\; \exists C(T): \quad |\langle v,
\varphi \rangle | \leq  \tilde{C}(T) \|\varphi^\prime \|_{L^2
([-T,T])} \quad \forall \varphi \in \mathcal{D} ([-T,T]).
\end{equation}
Set
\begin{equation}
\label{0.6} \mathcal{D}_1 (\mathbb{R}) = \{ \varphi^\prime :
\; \;   \varphi \in  \mathcal{D} (\mathbb{R}) \},
\quad
\mathcal{D}_1  ([-T,T])  =  \{ \varphi^\prime :
\; \;   \varphi \in  \mathcal{D} ([-T,T]) \}
\end{equation}
and consider  the linear functional
\begin{equation}
\label{0.7} q (\varphi^\prime ) := -\langle  v, \varphi \rangle,
 \qquad \varphi^\prime \in \mathcal{D}_1 (\mathbb{R}).
\end{equation}
In view of (\ref{0.5}), for each $T>0,$  $q(\cdot)$ is a
continuous linear functional defined in the space $D_1
([-T,T])\subset L^2 ([-T,T]).$ By Riesz Representation Theorem
there exists a function $Q_T (x) \in L^2 ([-T,T])$ such that
\begin{equation}
\label{0.7a} q(\varphi^\prime ) = \int_{-T}^T Q_T (x)
\varphi^\prime (x) dx \quad
 \forall   \varphi \in  \mathcal{D} ([-T,T]).
 \end{equation}
The function $Q_T $ is uniquely determined up to an additive
constant because in $L^2 ([-T,T])$ only constants are orthogonal
to $\mathcal{D}_1 ([-T,T]).$ Therefore, one can readily see that
there is a function $Q(x) \in L^2_{loc} (\mathbb{R})  $ such that
$$ q(\varphi^\prime ) = \int_{-\infty}^\infty  Q (x)
\varphi^\prime (x) dx  \qquad
 \forall   \varphi \in  \mathcal{D} (\mathbb{R}), $$
where the function $Q $ is uniquely determined up to an additive
constant. Thus, we have $$ \langle v, \varphi  \rangle = -
q(\varphi^\prime ) = -\langle Q, \varphi^\prime  \rangle  =
\langle Q^\prime, \varphi  \rangle, $$ i.e.,
\begin{equation}
\label{0.8}
v = Q^\prime.
\end{equation}

A distribution $v \in H^{-1}_{loc} (\mathbb{R})$ is called
{\em periodic of period} $\pi$ if
\begin{equation}
\label{0.9}
\langle v, \varphi (x) \rangle = \langle v, \varphi (x-\pi) \rangle
\qquad  \forall   \varphi \in  \mathcal{D} (\mathbb{R}).
\end{equation}

L. Schwartz \cite{Sch} gave an equivalent definition of a {\em
periodic of period} $\pi$ distribution in the following way: Let
$$ \omega: \; \mathbb{R} \to S^1 = \mathbb{R}/\pi\mathbb{Z}, \quad
\omega (x) = x  \mod  \pi. $$ A distribution $ F \in
\mathcal{D}^\prime (\mathbb{R}) $ is {\em periodic } if, for some
$f \in \left (C^\infty (S^1) \right )^\prime, $ we have
$$ F(x) = f(\omega (x)), \quad \text {i.e.,} \;\;
\langle \varphi, F \rangle = \langle \Phi , f \rangle, $$
where
$$ \Phi = \sum_{k \in \mathbb{Z}} \varphi (x-k\pi ).$$

Now, if $v$ is periodic and  $ Q \in L^2_{loc} (\mathbb{R} ) $ is
chosen so that
 (\ref{0.8}) holds, we have by (\ref{0.9})
 $$ \int_{-\infty}^\infty Q(x+\pi) \varphi^\prime  (x) dx =
 \int_{-\infty}^\infty Q(x) \varphi^\prime (x-\pi) =
 \int_{-\infty}^\infty Q(x) \varphi^\prime (x) dx, $$
 i.e.,
 $$ \int_{-\infty}^\infty [Q(x+\pi ) - Q(x) ] \varphi^\prime (x) dx = 0
\qquad  \forall   \varphi \in  \mathcal{D} (\mathbb{R}). $$
Thus, there exists a constant $c $ such that
$$ Q(x+\pi ) - Q(x)  = c \quad a.e. $$
Consider the function
$$ \tilde{Q} (x) = Q(x) - \frac{c}{\pi} x ; $$
then we have
$ \tilde{Q} (x+\pi) =
\tilde{Q} (x)  $   a.e., so
$\tilde{Q} $ is  $\pi$--periodic, and
$$ v = \tilde{Q}^\prime  +  \frac{c}{\pi}.$$
Let
\begin{equation}
\label{0.11}
\tilde{Q} (x) =  \sum_{m \in 2\mathbb{Z}} q(m) e^{imx}
\end{equation}
be the Fourier series expansion of the function $\tilde{Q} \in L^2([0,\pi]). $
Set
\begin{equation}
\label{0.12}
V(0) = \frac{c}{\pi}, \quad V(m) =  im q(m) \quad \text{for} \;\; m \neq 0.
\end{equation}
All this leads to the following statement.

\begin{Proposition}
\label{prop01}
Every $\pi$--periodic distribution  $v \in H^{-1}_{loc} (\mathbb{R}) $
has the form
\begin{equation}
\label{0.13} v =  C + Q^\prime, \qquad  Q \in L^2_{loc}
(\mathbb{R}), \quad Q (x+\pi) \stackrel{a.e.}{=} Q(x)
\end{equation}
with
\begin{equation}
\label{0.13a} q(0)  = \frac{1}{\pi}
\int_0^\pi Q(x) dx = 0,
\end{equation}
and can be written as a
converging in $H_{loc}^{-1} (\mathbb{R}) $
Fourier series
\begin{equation}
\label{0.14} v =  \sum_{m \in 2\mathbb{Z}} V(m) e^{imx}
\end{equation}
with
\begin{equation}
\label{0.15} V(0) = C, \quad V(m) =  im q(m) \quad \text{for} \;\;
m \neq 0,
\end{equation}
where $q(m) $ are the Fourier coefficients of $Q.$
Of course,
\begin{equation}
\label{0.16} \|Q\|^2_{L^2([0,\pi])}  =  \sum_{m \neq 0}
\frac{|V(m)|^2}{m^2}.
\end{equation}
\end{Proposition}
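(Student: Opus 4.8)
The plan is to assemble the proof from the computations already carried out in the paragraphs immediately preceding the statement, organizing them into the three assertions of Proposition~\ref{prop01}: the decomposition \eqref{0.13}--\eqref{0.13a}, the Fourier expansion \eqref{0.14}--\eqref{0.15}, and the norm identity \eqref{0.16}. First I would recall that an arbitrary $v \in H^{-1}_{loc}(\mathbb{R})$ can be written as $v = Q_0'$ for some $Q_0 \in L^2_{loc}(\mathbb{R})$, unique up to an additive constant; this is exactly the Riesz-representation argument \eqref{0.4}--\eqref{0.8}. Then, invoking the periodicity hypothesis \eqref{0.9} and the displayed computation that follows it, one gets $Q_0(x+\pi) - Q_0(x) = c$ a.e.\ for some constant $c$; replacing $Q_0$ by $\tilde Q(x) = Q_0(x) - (c/\pi)x$ yields a genuinely $\pi$-periodic function with $v = \tilde Q' + c/\pi$, which is the form \eqref{0.13} with $C = c/\pi$ and $Q = \tilde Q$. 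Finally, since only $Q$ up to a constant is determined, I would normalize by subtracting its mean over a period, so that \eqref{0.13a} holds; this choice of $Q$ is then unique.

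Next I would establish the Fourier series \eqref{0.14}. Expand the $\pi$-periodic $L^2([0,\pi])$ function $Q = \tilde Q$ in its Fourier series $Q = \sum_{m \in 2\mathbb{Z}} q(m) e^{imx}$ as in \eqref{0.11}, where $q(0) = 0$ by the normalization \eqref{0.13a}. Differentiating term by term in the sense of distributions gives $Q' = \sum_{m \neq 0} im\, q(m) e^{imx}$, and adding the constant $C = V(0)$ produces \eqref{0.14} with the coefficients \eqref{0.15}. Here I must be a little careful about the mode of convergence: the partial sums of $Q$ converge to $Q$ in $L^2([0,\pi])$, hence in $L^2_{loc}(\mathbb{R})$ after periodic extension, hence in $H^{-1}_{loc}(\mathbb{R})$; and since distributional differentiation is continuous from $L^2_{loc}$ into $H^{-1}_{loc}$, the differentiated series converges to $v - C$ in $H^{-1}_{loc}(\mathbb{R})$, which justifies the phrase ``converging in $H^{-1}_{loc}(\mathbb{R})$'' in the statement.

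The norm identity \eqref{0.16} is then immediate from Parseval's identity for the $\pi$-periodic function $Q$ on $[0,\pi]$: with the normalization $\|e^{imx}\|_{L^2([0,\pi])}^2 = \pi$ absorbed into the coefficient convention, $\|Q\|_{L^2([0,\pi])}^2 = \sum_{m \in 2\mathbb{Z}} |q(m)|^2 = \sum_{m \neq 0} |q(m)|^2$, and substituting $q(m) = V(m)/(im)$ for $m \neq 0$ from \eqref{0.15} gives $\sum_{m \neq 0} |V(m)|^2/m^2$.

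The only genuine subtlety — and thus the step I would treat most carefully — is the passage from the local statements to the periodic ones: namely, deducing from \eqref{0.9} that the a.e.-defined difference $Q_0(\cdot + \pi) - Q_0(\cdot)$ is an honest constant (not merely constant up to the ambiguity in $Q_0$), and then checking that the correction $\tilde Q(x) = Q_0(x) - (c/\pi)x$ is legitimately periodic with the same distributional derivative up to the constant $c/\pi$. Everything else is the standard Riesz-representation and Fourier-series bookkeeping already displayed in the text, together with the elementary continuity of $d/dx : L^2_{loc} \to H^{-1}_{loc}$; I would simply cite the preceding paragraphs rather than repeat those computations.
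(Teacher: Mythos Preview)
Your proposal is correct and follows exactly the argument laid out in the paragraphs preceding the statement: the Riesz-representation step \eqref{0.4}--\eqref{0.8}, the periodicity correction $\tilde Q(x)=Q_0(x)-(c/\pi)x$, the Fourier expansion \eqref{0.11}--\eqref{0.12}, and Parseval for \eqref{0.16}. The only additions you make---the explicit normalization $q(0)=0$ and the verification that term-by-term differentiation converges in $H^{-1}_{loc}$---are routine details that the paper leaves implicit.
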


{\em Remark.}  R. Hryniv and Ya. Mykytyuk \cite{HM01}, (see Theorem
3.1 and Remark 2.3) give a more general claim about  the structure
of uniformly bounded $H^{-1}_{loc}
(\mathbb{R})$--distributions.\vspace{3mm}

2.  In view of (\ref{0.4}), each distribution $v \in H^{-1}_{loc}
(\mathbb{R}) $ could be considered as a linear functional on the
space $H^1_{oo} (\mathbb{R})$ of functions in $H^1_{loc}
(\mathbb{R})$ with compact support. Therefore, if $v \in
H^{-1}_{loc} (\mathbb{R}) $ and $y \in H^1_{loc} (\mathbb{R}),$
then the differential expression $ \ell (y)= - y^{\prime \prime} +
v\cdot y $ is well--defined by $$\langle - y^{\prime \prime} +
v\cdot y, \varphi \rangle = \langle y^\prime, \varphi^\prime
\rangle + \langle v, y\cdot \varphi \rangle $$ as a distribution
in $H^{-1}_{loc} (\mathbb{R}). $ This observation suggests to
consider the Schr\"odinger operator $-d^2/dx^2 + v $ in the domain
\begin{equation}
\label{0.17} D(L(v))= \left \{y \in H^1_{loc} (\mathbb{R}) \cap
L^2 (\mathbb{R}) \;: \quad - y^{\prime \prime} + v\cdot y \in L^2
(\mathbb{R}) \right \}.
\end{equation}
Moreover, suppose $v =C + Q^\prime, $ where $C$ is a constant and
$ Q $ is a $\pi$--periodic function such that
\begin{equation}
\label{0.18} Q \in L^2 ([0,\pi]), \quad q(0) = \frac{1}{\pi}
\int_0^\pi Q(x) dx = 0.
\end{equation}
Then the differential expression $ \ell (y) = - y^{\prime \prime}
+ vy $ can be written in the form
\begin{equation}
\label{001} \ell (y) = - \left ( y^\prime - Qy  \right )^\prime -
Q y^\prime + C y.
\end{equation}
Notice that $$\ell (y) = - \left ( y^\prime - Qy  \right )^\prime
- Q y^\prime + C y = f \in L^2 (\mathbb{R}) $$ {\em if and only if
$$ u = y^\prime - Qy  \in W^1_{1,loc} (\mathbb{R}) $$ and the pair
$(y,u)$ satisfies the system of differential equations}
\begin{equation}
\label{0.31}  \left | \begin{array}{l}  y^\prime = Qy +u,\\
u^\prime = (C- Q^2) y - Qu + f. \end{array}  \right.
\end{equation}
Consider the corresponding homogeneous system
\begin{equation}
\label{0.32}  \left | \begin{array}{l}  y^\prime = Qy +u,\\
u^\prime = (C- Q^2) y - Qu. \end{array}  \right.
\end{equation}
with initial data
\begin{equation}
\label{0.33} y(0) = a, \quad u(0) = b.
\end{equation}
Since the coefficients $1, Q, C- Q^2 $  of the system (\ref{0.32})
are in $L^1_{loc}(\mathbb{R}),$ the standard existence--uniqueness
theorem for linear systems of  equations with
$L^1_{loc}(\mathbb{R})$--coefficients (e.g., see M. Naimark
\cite{Naim}, Sect.16, or F. Atkinson \cite{At}) guarantees that
for any pair of numbers $(a,b)$ the system (\ref{0.32}) has a
unique solution $(y,u)$ with $y,u \in W^1_{1,loc} (\mathbb{R})$
such that (\ref{0.33}) holds.

On the other hand, the coefficients of the system (\ref{0.32}) are
$\pi$--periodic, so one may apply the classical Floquet theory.

Let $(y_1,u_1) $ and $(y_2, u_2)$ be the solutions of
 (\ref{0.32}) which satisfy $y_1 (0)=1, u_1 (0) =0$ and
 $y_2(0)=0, u_2(0) =1.$  By the Caley--Hamilton theorem
 the Wronskian
 $$ det \begin{pmatrix} y_1 (x)  & y_2 (x) \\
 u_1 (x) & u_2 (x) \end{pmatrix}
\equiv 1 $$ because the trace of the coefficient matrix of the
system (\ref{0.32}) is zero.

If $(y(x),u(x)) $ is a solution of (\ref{0.32}) with initial data  $(a,b),$ then
$(y(x+\pi), u(x+\pi) ) $ is a solution also, correspondingly with
initial data
$$ \begin{pmatrix}  y(\pi)\\ u(\pi)  \end{pmatrix}
=  M \begin{pmatrix}  a\\ b  \end{pmatrix} , \quad
M=\begin{pmatrix}  y_1 (\pi) & y_2 (\pi) \\
 u_1 (\pi)  & u_2 (\pi)  \end{pmatrix}. $$
Consider the characteristic equation of the {\em monodromy matrix}
$M:$
\begin{equation}
\label{0.34}
\rho^2 - \Delta \rho + 1 = 0, \qquad \Delta =  y_1 (\pi) + u_2 (\pi) .
\end{equation}

Each root $\rho $ of the characteristic equation  (\ref{0.34})
gives a rise of a special solution $  (\varphi(x), \psi (x)) $
 of (\ref{0.32}) such that
 \begin{equation}
 \label{0.341}
 \varphi (x+\pi) = \rho \cdot \varphi (x), \quad \psi (x+\pi) = \rho
 \cdot \psi (x) .
 \end{equation}
Since the product of the
roots of (\ref{0.34}) equals $1,$
the roots have the form
\begin{equation}
\label{0.35} \rho^\pm = e^{\pm  \tau \pi}, \qquad \tau = \alpha+i
\beta,
\end{equation}
where $ \beta \in [0, 2] $ and $\alpha=0 $ if the roots are on the
unit circle or $ \alpha >0 $ otherwise.

In the case where the equation (\ref{0.34}) has two distinct
roots, let $  (\varphi^\pm, \psi^\pm)$ be special solutions of
(\ref{0.32}) that correspond to the roots (\ref{0.35}), i.e., $$
(\varphi^\pm (x+\pi), \psi^\pm (x+\pi) ) = \rho^\pm \cdot
(\varphi^\pm (x), \psi^\pm  (x) ). $$ Then one can readily see
that the functions $$ \tilde{\varphi}^\pm (x) = e^{\mp \tau x}
\varphi^\pm (x), \quad \tilde{\psi}^\pm (x) = e^{\mp \tau x}
\psi^\pm (x) $$ are $\pi$--periodic, and we have
\begin{equation}
\label{0.350}
 \varphi^\pm (x) =e^{\pm \tau x} \tilde{\varphi}^\pm (x),
\quad \psi^\pm (x) =e^{\pm \tau x} \tilde{\psi}^\pm (x).
\end{equation}

Consider the case where (\ref{0.34}) has a double root $\rho = \pm
1.$ If its geometric multiplicity equals 2
 (i.e., the matrix $M$ has two linearly independent eigenvectors),
then the equation (\ref{0.32}) has, respectively, two linearly
independent solutions $  (\varphi^\pm, \psi^\pm)$  which are
periodic if $\rho =1 $ or anti-periodic if $\rho = -1.$

Otherwise,  (if $M$ is a Jordan matrix), there are two linearly
independent vectors $\begin{pmatrix} a^+\\ b^+
\end{pmatrix} $ and $\begin{pmatrix} a^-\\ b^- \end{pmatrix} $
such that
\begin{equation}
\label{0.320} M
\begin{pmatrix} a^+\\ b^+ \end{pmatrix} = \rho
\begin{pmatrix} a^+\\ b^+ \end{pmatrix}, \quad
M
\begin{pmatrix} a^-\\ b^- \end{pmatrix} =\rho
\begin{pmatrix} a^-\\ b^- \end{pmatrix}
+ \rho \kappa  \begin{pmatrix} a^+\\ b^+ \end{pmatrix}, \quad
\rho=\pm 1, \; \kappa \neq 0.
\end{equation}
Let $  (\varphi^\pm, \psi^\pm)$
 be the corresponding solutions of (\ref{0.32}). Then we
have
\begin{equation}
\label{0.321}
\begin{pmatrix} \varphi^+ (x+\pi) \\ \psi^+ (x+\pi)  \end{pmatrix}
= \rho
\begin{pmatrix} \varphi^+ (x) \\ \psi^+ (x)  \end{pmatrix},
\qquad
\begin{pmatrix} \varphi^- (x+\pi) \\ \psi^- (x+\pi)  \end{pmatrix}
= \rho
\begin{pmatrix} \varphi^- (x) \\ \psi^- (x)  \end{pmatrix}
+ \rho \kappa \begin{pmatrix} \varphi^+ (x) \\ \psi^+ (x)
\end{pmatrix}.
\end{equation}
Now, one can easily see that the functions $\tilde{\varphi}^-$ and
$ \tilde{\psi}^-$ given by $$
\begin{pmatrix} \tilde{\varphi}^- (x) \\ \tilde{\psi}^- (x)
\end{pmatrix} =
\begin{pmatrix} \varphi^- (x) \\ \psi^- (x)  \end{pmatrix} -
\frac{\kappa x}{\pi} \begin{pmatrix} \varphi^+ (x) \\ \psi^+ (x)
\end{pmatrix} $$
are $\pi$--periodic (if $\rho =1$) or anti--periodic (if
$\rho=-1$). Therefore, the solution $\begin{pmatrix} \varphi^- (x)
\\ \psi^- (x)
\end{pmatrix} $ can be written in the form
\begin{equation}
\label{0.330}
\begin{pmatrix} \varphi^- (x) \\ \psi^- (x)
\end{pmatrix}=
\begin{pmatrix} \tilde{\varphi}^- (x) \\ \tilde{\psi}^-
(x)  \end{pmatrix} + \frac{ \kappa x}{\pi} \begin{pmatrix} \varphi^+ (x)
\\ \psi^+ (x)
\end{pmatrix},
\end{equation}
i.e., it is a linear combination of periodic (if $\rho =1$), or
anti--periodic (if $\rho = -1 $) functions with coefficients $1$
and $\kappa x /\pi.$

The following lemma shows how the properties of the solutions of
(\ref{0.31}) and (\ref{0.32}) depend on the roots of the
characteristic equation (\ref{0.34}).

\begin{Lemma}
\label{lem01} (a) The homogeneous system (\ref{0.32}) has no
nonzero solution $(y,u) $ with $ y \in L^2 (\mathbb{R}). $
Moreover, if the roots of the characteristic equation (\ref{0.34})
lie on the unit circle, i.e., $  \alpha =0 $ in the
representation (\ref{0.35}), then (\ref{0.32}) has no nonzero
solution $(y,u) $ with $ y \in L^2 ((-\infty,0] ) $ or $ y \in L^2
([0, +\infty ) ). $

 (b)  If
 $  \alpha=0 $ in the representation (\ref{0.35}), then there are functions
$f \in L^2 (\mathbb{R}) $ such that the corresponding
non-homogeneous system (\ref{0.31}) has no solution $(y, u)$ with
$ y \in L^2 (\mathbb{R}). $

 (c)   If  the roots of the characteristic equation (\ref{0.34})
lie outside the unit circle, i.e.,
 $  \alpha > 0 $ in the representation (\ref{0.35}), then
the non-homogeneous system (\ref{0.31}) has, for each $f \in L^2
(\mathbb{R}),$ a unique solution $ (y,u) = (R_1 (f), R_2 (f)) $
such that   $R_1 $ is a linear continuous operator  from $L^2
(\mathbb{R}) $ into $W^1_2 (\mathbb{R}), $  and $R_2 $ is a linear
continuous operator  in $L^2 (\mathbb{R}) $ with a range in
$W^1_{1,loc} (\mathbb{R}).$

\end{Lemma}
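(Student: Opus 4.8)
The whole argument runs through the powers $M^{n}$ of the monodromy matrix and the Floquet representations (\ref{0.341})--(\ref{0.350}). \emph{Part (a).} Let $(y,u)$ solve (\ref{0.32}) with initial vector $\xi_{0}=(y(0),u(0))$. Since the coefficients are $\pi$-periodic and $\det M=1$, the pair $(y(\cdot+n\pi),u(\cdot+n\pi))$ is the solution with initial vector $M^{n}\xi_{0}$ for every $n\in\mathbb{Z}$. The map $\xi\mapsto y_{\xi}|_{[0,\pi]}\in C[0,\pi]$ is linear and injective: if $y_{\xi}\equiv0$ on $[0,\pi]$ then $y_{\xi}(0)=0$ and $y_{\xi}'=0$ a.e., hence $u_{\xi}=y_{\xi}'-Qy_{\xi}=0$ and $\xi=0$. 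So $\xi\mapsto\|y_{\xi}\|_{L^{2}[0,\pi]}$ is a norm on $\mathbb{C}^{2}$, and there are $c_{0},C_{0}>0$ with
$$c_{0}\,|M^{n}\xi_{0}|\ \le\ \|y\|_{L^{2}([n\pi,(n+1)\pi])}\ \le\ C_{0}\,|M^{n}\xi_{0}|,\qquad n\in\mathbb{Z}.$$
Thus $y\in L^{2}(\mathbb{R})$ forces $M^{n}\xi_{0}\to0$ both as $n\to+\infty$ and as $n\to-\infty$; decomposing $\xi_{0}$ along the generalized eigenspaces of $M$, the first limit requires every eigenvalue entering $\xi_{0}$ to have modulus $<1$ and the second requires modulus $>1$, which is possible only for $\xi_{0}=0$, so $(y,u)\equiv0$. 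If moreover $\alpha=0$ in (\ref{0.35}), both eigenvalues of $M$ lie on the unit circle, so already $M^{n}\xi_{0}\to0$ as $n\to+\infty$ forces $\xi_{0}=0$; this rules out $y\in L^{2}([0,\infty))$, and symmetrically $y\in L^{2}((-\infty,0])$.

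\emph{Part (b).} Assume $\alpha=0$, fix $R>0$, and let $\Lambda_{R}\colon L^{2}[0,R]\to\mathbb{C}^{2}$ send $f$ to $(y_{f}(R),u_{f}(R))$, where $(y_{f},u_{f})$ solves (\ref{0.31}) on $[0,R]$ with $y_{f}(0)=u_{f}(0)=0$. Variation of parameters against a fundamental matrix of (\ref{0.32}) expresses $\Lambda_{R}f$ as an invertible $2\times2$ matrix — its determinant equals the nonvanishing constant Wronskian of (\ref{0.32}) — applied to $\bigl(\int_{0}^{R}\varphi^{+}f,\ \int_{0}^{R}\varphi^{-}f\bigr)$; since $\varphi^{+},\varphi^{-}$ are linearly independent on $[0,R]$, $\Lambda_{R}$ is onto $\mathbb{C}^{2}$, so $\Lambda_{R}\not\equiv0$. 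Pick any $f\in L^{2}[0,R]$, extended by $0$ to $\mathbb{R}$, with $\Lambda_{R}f\neq0$. If $(y,u)$ were an $L^{2}(\mathbb{R})$ solution of (\ref{0.31}) for this $f$, then on $(-\infty,0]$ and on $[R,\infty)$ the pair $(y,u)$ solves the homogeneous system with $y$ square-integrable on a half-line, so by part (a) $y\equiv0$ there (and then $u\equiv0$ too); in particular $y(0)=u(0)=0$, whence $(y,u)|_{[0,R]}=(y_{f},u_{f})$ by uniqueness of the Cauchy problem, and $\Lambda_{R}f=(y(R),u(R))=0$ — a contradiction.

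\emph{Part (c).} Now $\alpha>0$, so the roots (\ref{0.35}) are distinct, $|\rho^{+}|=e^{\alpha\pi}>1>e^{-\alpha\pi}=|\rho^{-}|$, and by (\ref{0.350}) the Floquet solutions and their quasi-derivatives satisfy $|\varphi^{+}(x)|+|\psi^{+}(x)|\le Ce^{\alpha x}$, $|\varphi^{-}(x)|+|\psi^{-}(x)|\le Ce^{-\alpha x}$ with $\tilde\varphi^{\pm},\tilde\psi^{\pm}$ continuous and $\pi$-periodic, and with constant nonzero Wronskian $W$. Set
$$y(x)=\tfrac1W\Bigl(\varphi^{+}(x)\!\int_{x}^{\infty}\!\varphi^{-}f+\varphi^{-}(x)\!\int_{-\infty}^{x}\!\varphi^{+}f\Bigr),\qquad u(x)=\tfrac1W\Bigl(\psi^{+}(x)\!\int_{x}^{\infty}\!\varphi^{-}f+\psi^{-}(x)\!\int_{-\infty}^{x}\!\varphi^{+}f\Bigr);$$
for $f\in L^{2}(\mathbb{R})$ the integrals converge absolutely by Cauchy--Schwarz and the exponential bounds, and differentiating (and using the equations for $\varphi^{\pm},\psi^{\pm}$) shows $(y,u)$ solves (\ref{0.31}). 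From $|y(x)|+|u(x)|\le C'\int_{\mathbb{R}}e^{-\alpha|x-s|}|f(s)|\,ds$, Young's inequality gives $\|y\|_{L^{2}}+\|u\|_{L^{2}}\le C''\|f\|_{L^{2}}$ and $\|y\|_{\infty}\le C''\|f\|_{L^{2}}$. Splitting $\mathbb{R}$ into the period intervals $I_{n}=[n\pi,(n+1)\pi]$ and convolving an $\ell^{1}$ sequence with $\bigl(\|f\|_{L^{2}(I_{n})}\bigr)_{n}\in\ell^{2}$ yields $\bigl(\sup_{I_{n}}|y|\bigr)_{n}\in\ell^{2}$, so $\|Qy\|_{L^{2}}^{2}=\sum_{n}\int_{I_{n}}|Q|^{2}|y|^{2}\le\|Q\|_{L^{2}(I_{0})}^{2}\sum_{n}\sup_{I_{n}}|y|^{2}<\infty$; hence $y'=Qy+u\in L^{2}(\mathbb{R})$ and $R_{1}:=(f\mapsto y)$ is bounded from $L^{2}(\mathbb{R})$ into $W^{1}_{2}(\mathbb{R})$, while $R_{2}:=(f\mapsto u)$ is bounded on $L^{2}(\mathbb{R})$ with $u'=(C-Q^{2})y-Qu+f\in L^{1}_{loc}(\mathbb{R})$, i.e.\ $R_{2}f\in W^{1}_{1,loc}(\mathbb{R})$. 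Uniqueness is immediate: the difference of two solutions is an $L^{2}(\mathbb{R})$ solution of (\ref{0.32}), which vanishes by part (a).

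\emph{Where the difficulty sits.} The only genuinely non-routine point is part (b): inspecting one variation-of-parameters formula does not suffice, one must use the rigidity from part (a) (no nonzero half-line $L^{2}$ solution of the homogeneous system when $\alpha=0$) to reinterpret (\ref{0.31}) on $[0,R]$ as an over-determined boundary value problem. In part (c) the one thing to watch is that $Q$ is only locally $L^{2}$, not bounded, so $Qy\in L^{2}(\mathbb{R})$ must come from the per-period $\ell^{2}$ estimate rather than from the $L^{\infty}$ bound on $y$ alone.
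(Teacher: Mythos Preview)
Your proof is correct. Part (b) is essentially the paper's argument; parts (a) and (c) take genuinely different routes.

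For part (a), the paper works case by case with the explicit Floquet form (\ref{0.350}): when $\alpha>0$ the two exponential pieces blow up on opposite half-lines; when $\tau=i\beta$ with $\beta$ rational the solution is periodic; when $\beta$ is irrational they compute Ces\`aro averages $\frac{1}{T}\int_{0}^{T}y(x)e^{-i(k\pm\beta)x}\,dx\to C^{\pm}\tilde\varphi^{\pm}_{k}$ and show these cannot vanish if $y\in L^{2}([0,\infty))$; the Jordan case is handled separately via (\ref{0.330}). Your norm-equivalence argument on $\mathbb{C}^{2}$ bypasses all of this: it reduces $y\in L^{2}$ on a half-line to $\sum_{n\ge0}|M^{n}\xi_{0}|^{2}<\infty$, hence $M^{n}\xi_{0}\to0$, which is impossible for a $2\times2$ matrix with spectrum on the unit circle unless $\xi_{0}=0$. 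This is shorter and treats the diagonalizable and Jordan cases uniformly.

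For part (c), both proofs write down the same variation-of-parameters formula (\ref{0.344})--(\ref{0.354}) and get $\|y\|_{L^{2}}+\|u\|_{L^{2}}\lesssim\|f\|_{L^{2}}$ the same way. The difference is in $y'\in L^{2}$: the paper differentiates the explicit formula, writes $y'=v^{+}(\varphi^{+})'+v^{-}(\varphi^{-})'$, and estimates each piece using $d\tilde\varphi^{\pm}/dx\in L^{2}([0,\pi])$ via a block-by-block geometric sum. You instead go back to the equation $y'=Qy+u$ and control $Qy$ by the per-period $\ell^{2}$ bound on $\sup_{I_{n}}|y|$ coming from the $\ell^{1}*\ell^{2}$ convolution structure of the kernel $e^{-\alpha|x-s|}$. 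Your route is cleaner conceptually (it uses only the system (\ref{0.31}) and the pointwise kernel bound, not the fine structure of $\varphi^{\pm}$), while the paper's route gives slightly more explicit constants in terms of $C_{\pm},K_{\pm}$.
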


\begin{proof}

(a) In view of the above discussion
(see the text from (\ref{0.34}) to (\ref{0.330})), if the characteristic
equation (\ref{0.34}) has two  distinct roots $\rho = e^{\pm \tau \pi},$
then each solution $(y,u)$ of the homogeneous system (\ref{0.32})
is a linear combination of the special solutions, so  $$ y(x) =
C^+ e^{\tau x} \tilde{\varphi}^+ (x) + C^-  e^{-\tau x}
\tilde{\varphi}^- (x),$$
 where $\tilde{\varphi}^+ $ and $  \tilde{\varphi}^- $ are
$\pi$--periodic functions in $H^1.$

In the case where the real part of $\tau $ is strictly positive,
i.e., $ \tau = \alpha + i \beta $ with $\alpha >0,$ one can
readily see that $e^{\tau x} \tilde{\varphi}^+ (x) \not \in L^2
([0,\infty))$ but $e^{\tau x} \tilde{\varphi}^+ (x)  \in L^2
((-\infty,0]),$
 while $e^{-\tau x} \tilde{\varphi}^- (x) \in L^2
([0,\infty))$ but $e^{-\tau x} \tilde{\varphi}^- (x) \not \in L^2
((-\infty, 0])).$ Therefore,  if $y \not \equiv 0 $ we have $y
\not \in L^2 (\mathbb{R}).$

Next we consider the case where $\tau = i \beta $ with $\beta \neq
0, 1. $ The Fourier series of the functions $\tilde{\varphi}^+ (x)
$ and $\tilde{\varphi}^- (x)$ $$ \tilde{\varphi}^+ \sim \sum_{k
\in 2\mathbb{Z}} \tilde{\varphi}^+_k e^{ikx}, \quad
\tilde{\varphi}^- \sim \sum_{k \in 2\mathbb{Z}}
\tilde{\varphi}^-_k e^{ikx} $$ converge uniformly in $\mathbb{R} $
because $ \tilde{\varphi}^+, \tilde{\varphi}^- \in H^1. $
Therefore, we have $$y(x) =C^+ \sum_{k \in 2\mathbb{Z}}
\tilde{\varphi}^+_k e^{i(k+\beta)x}+ C^- \sum_{k \in 2\mathbb{Z}}
\tilde{\varphi}^-_k e^{i(k-\beta)x}, $$ where the series on the
right converge uniformly on $\mathbb{R}.$ If $ \beta $ is a
rational number, then $y $ is a periodic function, so $ y \not \in
L^2 ((-\infty,0])$ and $ y \not \in L^2 ([0,\infty)).$

If $\beta $ is an irrational number, then
\begin{equation}
\label{0.340} \lim_{T\to \infty} \frac{1}{T} \int_0^T y(x) e^{-
i(k \pm \beta)x}  dx = C^\pm \tilde{\varphi}^\pm_k \quad \forall k
\in 2\mathbb{Z}.
\end{equation}
On the other hand, if $y \in L^2 ([0,\infty)) $, then the Cauchy
inequality implies $$ \left | \frac{1}{T} \int_0^T y(x) e^{- i(k
\pm \beta)x} dx \right | \leq \frac{1}{T}  \left (\int_0^T 1\cdot
dx \right )^{1/2}  \left (\int_0^T |y(x)|^2 dx \right )^{1/2} \leq
\frac{\|y\|_{L^2 ([0,\infty))} }{\sqrt{T}} \to 0.$$ But, in view
of (\ref{0.340}), this is impossible if $y \neq 0.$ Thus $y \not
\in L^2 ([0,\infty)). $  In a similar way, one can see that $y
\not \in L^2 ((-\infty,0]) $.

Finally, if the characteristic equation (\ref{0.34}) has a double
root $\rho = \pm  1, $ then either every solution $(y,u)$ of
(\ref{0.32}) is periodic or anti--periodic, and so $y \not \in L^2
([0,\infty) $ and $y \not \in L^2 ((-\infty,0]), $ or it is a
linear combination of some special solutions (see (\ref{0.330}),
and the preceding discussion), so we have $$ y(x) = C^+ \varphi^+
(x) + C^- \tilde{\varphi}^- + C^- \frac{\kappa x}{\pi} \varphi^+
(x),$$ where the functions $\varphi^+ $ and $\tilde{\varphi}^- $
are periodic or anti--periodic. Now one can easily see that $y
\not \in L^2 ([0,\infty) $ and $y \not \in L^2 ((-\infty,0]), $
which completes the proof of (a). \vspace{2mm}

 (b) Let
$ (\varphi^\pm, \psi^\pm)$ be special solutions of (\ref{0.32})
that correspond to the roots (\ref{0.35}) as above. We may assume
without loss of generalities that the Wronskian  of the solutions
$(\varphi^+, \psi^+) $
 and $(\varphi^-, \psi^-) $ equals $1$ because these
 solutions are determined up to constant multipliers.

The standard method of variation of constants leads to the
following solution $(y,u)$ of the non--homogeneous system
(\ref{0.31}):
\begin{equation}
\label{0.351}
y = v^+ (x) \varphi^+ (x)  + v^- (x) \varphi^- (x),
\quad u= v^+ (x) \psi^+ (x)  + v^- (x) \psi^- (x),
\end{equation}
where $ v^+ $ and $v^- $ satisfy
\begin{equation}
\label{0.352}
 \frac{dv}{dx}^+ \cdot \varphi^+   + \frac{dv}{dx}^- \cdot
  \varphi^- = 0,
\quad \frac{dv}{dx}^+ \cdot  \psi^+   + \frac{dv}{dx}^- \cdot
\psi^- = f,
\end{equation}
so
\begin{equation}
\label{0.353} v^+ (x) = -\int_0^x \varphi^- (t) f(t) dt + C^+,
\quad v^- (x) = \int_0^x \varphi^+ (t) f(t) dt +C^-.
\end{equation}

Assume that the characteristic equation (\ref{0.34}) has roots of
the form $ \rho = e^{i \beta \pi}, \, \beta \in [0,2).$
 Take any function $f \in L^2 (\mathbb{R})$ with compact support,
say $\text{supp}\, f \subset (0,T).$  By (\ref{0.351}) and
(\ref{0.353}), if $(y,u)$ is a solution of the non-homogeneous
system (\ref{0.31}), then the restriction of $(y,u)$ on the
intervals $(-\infty, 0) $ and
 $[T, \infty )$ is a solution of the homogeneous system
(\ref{0.32}). So, by (a), if $y \in L^2 (\mathbb{R}) $ then
$y\equiv 0$ on the intervals $(-\infty, 0) $ and
 $[T, \infty ). $ This may happen if only if
 the constants $C^\pm $ in (\ref{0.353}) are zeros, and we have
 $$ \int_0^T \varphi^- (t) f(t) dt = 0, \quad
\int_0^T \varphi^+ (t) f(t) dt = 0. $$

Hence, if $f$ is not orthogonal to the functions $\varphi^\pm, $
on the interval $[0,T],$ then the non--homogeneous system
(\ref{0.31}) has no solution $(y,u)$ with $y\in L^2(\mathbb{R}).$
This completes the proof of (b). \vspace{2mm}

(c) Now we consider the case where the characteristic equation
(\ref{0.34})  has roots of the form (\ref{0.35}) with $\alpha >0.$
Let $(\varphi^\pm, \psi^\pm) $ be the corresponding special
solutions. By (\ref{0.351}), for each $f \in L^2 (\mathbb{R}),$
the non-homogeneous system (\ref{0.31}) has a solution of the form
$(y,u)=(R_1 (f), R_2 (f),$ where
\begin{equation}
\label{0.344} R_1 (f) = v^+ (x) \varphi^+ (x)  + v^- (x) \varphi^-
(x), \quad R_2 (f)= v^+ (x) \psi^+ (x)  + v^- (x) \psi^- (x),
\end{equation}
and (\ref{0.352}) holds.  In order to have a solution
 that vanishes at $\pm \infty $ we set (taking into account
 (\ref{0.350}))
\begin{equation}
\label{0.354} v^+ (x) =  \int_x^\infty e^{-\tau t}
\tilde{\varphi}^- (t) f(t) dt, \quad v^- (x) =  \int_{-\infty}^x
e^{\tau t} \tilde{\varphi}^+ (t) f(t) dt.
\end{equation}

Let  $ C_\pm = \max \{ |\tilde{\varphi}^\pm (x)|: \; x \in
[0,\pi]\}.$ By (\ref{0.350}), we have
\begin{equation}
\label{0.355}
|\varphi^\pm (x)|   \leq C_\pm  \cdot e^{\pm \alpha x}.
\end{equation}
Therefore, by the Cauchy inequality,  we get $$ |v^+ (x) |^2 \leq
C^2_- \left | \int_x^\infty e^{-\alpha t } |f(t)| dt \right |^2
\leq C^2_-  \left  (    \int_x^\infty e^{-\alpha t} dt \right )
\cdot
 \left  (    \int_x^\infty e^{-\alpha t} |f(t)|^2 dt \right ),
 $$
 so
\begin{equation}
\label{0.356}
  |v^+ (x) |^2
 \leq
\frac{C^2_-}{\alpha  }
e^{-\alpha x} \int_x^\infty e^{-\alpha t} |f(t)|^2 dt.
\end{equation}
 Thus, by (\ref{0.355}),
 $$
 \int_{-\infty}^\infty \left | v^+(x) \right |^2 \left | \varphi^+(x) \right |^2 dx
 \leq   \frac{C^2_- C_+^2}{\alpha }
 \int_{-\infty}^\infty e^{\alpha x} \int_x^\infty e^{-\alpha t} |f(t)|^2 dt dx $$
 $$ \leq  \frac{C^2_- C_+^2}{\alpha }
 \int_{-\infty}^\infty |f(t)|^2  \left ( \int_{-\infty}^t e^{\alpha (x-t)} dx
 \right ) dt = \frac{C^2_- C_+^2}{\alpha^2 } \|f\|^2_{L^2(\mathbb{R})}. $$

In an analogous way one may prove that
$$
 \int_{-\infty}^\infty \left | v^-(x) \right |^2 \left | \varphi^-(x) \right |^2 dx
 \leq \frac{C^2_- C_+^2}{\alpha^2 } \|f\|^2_{L^2(\mathbb{R})}. $$
In view of (\ref{0.351}), these estimates prove that $ R_1 $ is a continuous
operator in $L^2 (\mathbb{R}). $

Next we estimate the $L^2 (\mathbb{R})$--norm of $y^\prime =
\frac{d}{dx} R_1 (f). $ In view of (\ref{0.352}), we have
$$y^\prime (x)  =  v^+ (x) \cdot  \frac{d\varphi}{dx}^+ (x)  + v^-
(x) \cdot \frac{d\varphi}{dx}^- (x).  $$

By (\ref{0.350}), $$ v^+ (x) \cdot  \frac{d\varphi}{dx}^+ (x) =
\alpha v^+ (x)  \varphi^+  + v^+ (x)  e^{\alpha x}
\frac{d\tilde{\varphi}}{dx}^+. $$ Since the $L^2
(\mathbb{R})$--norm of $ v^+ (x)  \varphi^+ $ has been estimated
above, we need to estimate only the $L^2 (\mathbb{R})$--norm of
$v^+ (x)  e^{\alpha x}  d\tilde{\varphi}^+/dx. $ By (\ref{0.356}),
we have $$ \int_{-\infty}^\infty \left | v^+ (x)  e^{\alpha x}
d\tilde{\varphi}^+/dx \right |^2  dx \leq \frac{C_-^2}{\alpha}
\int_{-\infty}^{\infty} \left | d\tilde{\varphi}^+/dx \right |^2
e^{\alpha x}  \int_x^\infty e^{-\alpha t} |f(t)|^2 dt dx $$ $$=
\frac{C_-^2}{\alpha} \int_{-\infty}^{\infty} |f(t)|^2   \left (
\int_{-\infty}^t \left | d\tilde{\varphi}^+/dx \right |^2
e^{\alpha (x-t)}  dx  \right ) dt. $$ Firstly, we estimate the
integral in the parentheses. Notice that the function
$d\varphi^\pm /dx $ (and therefore, $ d\tilde{\varphi}^\pm/dx $ )
are in the space $L^2 ([0,\pi])$ due to the first equation in
(\ref{0.32}). Therefore,
\begin{equation}
\label{0.358} K_\pm^2 = \int_0^\pi \left |
\frac{d\tilde{\varphi}}{dx}^\pm (x) \right |^2 dx  < \infty.
\end{equation}
We have
 $$ \int_{-\infty}^t
\left | d\tilde{\varphi}^+/dx \right |^2 e^{\alpha (x-t)}  dx  =
\sum_{n=0}^\infty \int_{-(n+1)\pi}^{-n \pi} \left |
\frac{d\tilde{\varphi}}{dx}^+ (\xi+t) \right |^2  e^{\alpha \xi}
d\xi $$ $$ \leq K^2_+ \cdot \sum_{n=0}^\infty e^{-\alpha n \pi } =
\frac{K^2_+}{1- \exp (-\alpha \pi )} < (1+\alpha \pi)
\frac{K^2_+}{\alpha \pi}. $$ Thus, $$ \int_{-\infty}^\infty \left
| v^+ (x)  e^{\alpha x} d\tilde{\varphi}^+/dx \right |^2  dx \leq
(1+\alpha \pi) \frac{C_-^2 K^2_+}{\alpha^2 \pi} \|f\|^2 . $$

In an analogous way it follows that $$ \int_{-\infty}^\infty \left
| v^- (x)  e^{\alpha x}  \frac{d\tilde{\varphi}}{dx}^- \right |^2
dx \leq (1+\alpha \pi) \frac{C_+^2 K^2_-}{\alpha^2 \pi} \|f\|^2 ,
$$ so the operator $R_1 $ act continuously from $L^2 (\mathbb{R})
$ into the space $W^1_2 (\mathbb{R}).$

The proof of the fact that
 the operator $R_2 $ is continuous in $L^2 (\mathbb{R}) $
 is omitted because  essentially it is
 the same (we only replace $\varphi^\pm $ with
 $\psi^\pm $ in the proof that $R_1$ is a
 continuous operator in $L^2 (\mathbb{R})$).
  \end{proof}

We need also the following lemma.

\begin{Lemma}
\label{lem001} Let $H$ be a Hilbert space with product
$(\cdot,\cdot),$ and let $$ A: \; D(A) \to H, \quad B: \; D(B) \to
H $$ be (unbounded) linear operators with domains $D(A)$ and
$D(B), $ such that
\begin{equation}
\label{0.390} (A f,g) = (f,B g) \quad \text{for} \;\; f\in D(A),
\; g\in D(B).
\end{equation}
If there is a $\lambda \in \mathbb{C} $ such that the operators
$A-\lambda $ and $B-\overline{\lambda} $ are surjective, then

(i)   $ D(A) $  and $D(B) $ are dense in $H;$

(ii)  $A^* = B $ and $  B^* = A, $ where $ A^* $ and $B^*$ are,
respectively, the adjoint operators of $A$ and $B.$
\end{Lemma}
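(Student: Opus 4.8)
The plan is to prove the two assertions in the natural order: first density of the domains, then the identification of the adjoints.

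\textbf{Density of the domains.} I would show $D(A)$ is dense by showing $D(A)^\perp = \{0\}$. Suppose $h \perp D(A)$, i.e. $(Af, h) = 0$ is not quite what we want; instead take $h$ with $(f,h)=0$ for all $f \in D(A)$. Since $A - \lambda$ is surjective, given any $w \in H$ there is $f \in D(A)$ with $(A-\lambda)f = w$. I want to produce $h$ as something in the range of $B - \overline\lambda$. Concretely: since $B - \overline\lambda$ is surjective, pick $g \in D(B)$ with $(B-\overline\lambda)g = h$. Then for every $f \in D(A)$, using \eqref{0.390},
\[
(f,h) = (f,(B-\overline\lambda)g) = ((A-\lambda)f, g).
\]
If $h \perp D(A)$ this says $((A-\lambda)f,g) = 0$ for all $f\in D(A)$; but $A-\lambda$ is surjective, so $(w,g)=0$ for all $w\in H$, hence $g=0$, hence $h = (B-\overline\lambda)g = 0$. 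This proves $D(A)$ is dense. By the symmetric hypothesis (the roles of $A, \lambda$ and $B, \overline\lambda$ are interchangeable in \eqref{0.390} after conjugation), the same argument gives $D(B)$ dense. So (i) holds, and in particular $A^*$ and $B^*$ are well-defined closed operators.

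\textbf{Identification of the adjoints.} I will show $A^* = B$; then $B^* = A$ follows by the symmetry of the hypotheses (or by taking adjoints once more, using that $A$ turns out to be closed as the adjoint of the densely defined $B$). The inclusion $B \subseteq A^*$ is immediate from \eqref{0.390}: for $g \in D(B)$, the functional $f \mapsto (Af,g) = (f,Bg)$ is bounded on $D(A)$, so $g \in D(A^*)$ and $A^*g = Bg$. For the reverse inclusion $D(A^*) \subseteq D(B)$, take $g \in D(A^*)$ and set $h := (A^* - \overline\lambda)g \in H$. Since $B - \overline\lambda$ is surjective, choose $\tilde g \in D(B)$ with $(B - \overline\lambda)\tilde g = h$. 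By the inclusion just proved, $\tilde g \in D(A^*)$ with $A^*\tilde g = B\tilde g$, so $(A^* - \overline\lambda)(g - \tilde g) = 0$, i.e. $(A^* - \overline\lambda)(g-\tilde g)=0$. Now for any $f\in D(A)$,
\[
0 = \big(f,(A^*-\overline\lambda)(g-\tilde g)\big) = \big((A-\lambda)f,\, g - \tilde g\big),
\]
and since $A - \lambda$ is surjective this forces $g - \tilde g = 0$, so $g = \tilde g \in D(B)$ and $A^*g = Bg$. Hence $A^* = B$, and symmetrically $B^* = A$.

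\textbf{Main obstacle.} The only real subtlety is making sure the surjectivity hypotheses are used on the correct operator at the correct moment — the argument is a ``Schur-type'' bootstrap where one solves an equation for $B-\overline\lambda$ to manufacture the candidate element and then uses surjectivity of $A-\lambda$ to kill an orthogonality relation. There is no genuine analytic difficulty; no closability needs to be assumed separately since density of $D(A)$ (which we prove first) already guarantees $A^*$ exists, and the equality $A^* = B$ then shows $A^*$ is in fact $B$ with the stated domain. One should double-check that \eqref{0.390} is genuinely symmetric under $(A,\lambda) \leftrightarrow (B,\overline\lambda)$: replacing $f$ by $g$ and conjugating gives $(Bg,f) = (g,Af)$, which is the same relation with the two operators swapped, so every step above applies verbatim to yield the $B$-statements.
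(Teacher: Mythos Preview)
Your proof is correct and follows essentially the same approach as the paper's. Both the density argument and the identification of adjoints proceed exactly as in the paper: one writes the given element as $(B-\overline{\lambda})g$ using surjectivity of $B-\overline{\lambda}$, converts the inner product via \eqref{0.390} (or the definition of $A^*$), and then invokes surjectivity of $A-\lambda$ to conclude that the resulting orthogonality forces the element to vanish; your explicit separation of the inclusion $B\subseteq A^*$ in part~(ii) is a minor cosmetic difference only.
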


\begin{proof}
We need to explain only that $D(A)$ is dense in $H$ and $A^* = B $
because one can replace the roles of $A$ and $ B.$

To prove that $D(A) $ is dense in $H,$ we need to show that if $h$
is orthogonal to $D(A) $ then $h=0.$ Let $$ (f, h) = 0 \quad
\forall f \in D(A). $$ Since the operator $B- \overline {\lambda}
$ is surjective, there is $g \in D(B) $ such that $h = (B-
\overline {\lambda})g. $ Therefore, by (\ref{0.390}), we have $$
0= (f, h) = (f,(B- \overline {\lambda})g) = ((A-\lambda)f, g)
\quad \forall f \in D(A), $$ which yields $ g=0 $ because the
range of $A-\lambda $ is $H.$ Thus, $ h =(B- \overline {\lambda})g
=0. $ Hence (i) holds.

Next we prove (ii). If $g^* \in Dom (A^*), $ then we have

\begin{equation}
\label{0.38}
(A-\lambda)f, g^*) = (f,w) \quad \forall f \in D(A),
\end{equation}
 where
$w=(A^* - \overline{\lambda})g^*.$ Since the operator $B-
\overline {\lambda} $ is surjective, there is $g \in D(B) $ such
that $w = (B- \overline {\lambda})g. $ Therefore, by (\ref{0.390})
and (\ref{0.38}), we have $$ ((A-\lambda)f, g^*) = (f,(B-
\overline {\lambda})g) =((A-\lambda)f, g) \quad \forall f \in
D(A),$$ which implies that $g^* = g$ (because the range of $A
-\lambda $ is equal to $H$) and $(A^* - \overline{\lambda})g^* =
(B - \overline{\lambda})g^*, $ i.e., $A^* g^* = B g^*. $ This
completes the proof of (ii).

\end{proof}

Consider the Schr\"odinger operator with a spectral parameter $$
L(v) -\lambda = -d^2/dx^2 + (v- \lambda), \quad \lambda \in
\mathbb{C}.$$ In view of the formula (\ref{0.13} in Proposition
\ref{prop01}, we may assume without loss of generality that
\begin{equation}
\label{0.39}
 C = 0, \quad v= Q^\prime,
\end{equation}
because a change of $C $ results in a shift of the spectral
parameter $\lambda.$

Replacing  $C$ by $ - \lambda $
in the homogeneous system (\ref{0.32}),
we get
\begin{equation}
\label{0.44}  \left | \begin{array}{l}  y^\prime = Qy +u,\\
u^\prime = (-\lambda - Q^2) y - Qu. \end{array}  \right.
\end{equation}
Let $(y_1 (x;\lambda),u_1(x;\lambda)) $  and $(y_2(x;\lambda),
u_2(x;\lambda))$ be the solutions of
 (\ref{0.44}) which satisfy the initial conditions
 $y_1 (0;\lambda)=1, u_1 (0;\lambda) =0  $ and
 $y_2(0;\lambda)=0, u_2(0;\lambda) =1.$
 Since these solutions depend analytically on $\lambda \in \mathbb{C},
 $ the {\em Lyapunov function}, or {\em Hill discriminant},
\begin{equation}
\label{0.45} \Delta (Q,\lambda ) =  y_1 (\pi;\lambda) +u_2
(\pi;\lambda)
\end{equation}
is an entire function. Taking the conjugates of the equation in
(\ref{0.44}), one can easily see that
\begin{equation}
\label{0.46} \Delta (\overline{Q},\overline{\lambda} ) =
\overline{\Delta (Q, \lambda)}.
\end{equation}

{\em Remark.} A. Savchuk and  A. Shkalikov  gave asymptotic
analysis of the functions  $y_j (\pi, \lambda) $ and $u_j (\pi,
\lambda), \, j=1,2.$ In particular, it follows from Formula (1.5)
of Lemma~1.4  in \cite{SS03} that, with $ z^2 = \lambda, $
\begin{equation}
\label{00.45}    y_1 (\pi, \lambda) =    \cos (\pi z)  + o(1),
\quad y_2 (\pi, \lambda) = \frac{1}{z} [  \sin (\pi z)  + o(1)],
\quad u_2 (\pi, \lambda) = \cos \pi z  + o(1),
\end{equation}
and therefore,
\begin{equation}
\label{00.46}
\Delta (Q, \lambda) = 2 \cos \pi z  + o(1), \quad z^2 = \lambda,
\end{equation}
inside any parabola
\begin{equation}
\label{00.47}
P_a = \{\lambda \in \mathbb{C}: \quad |Im \, z| \leq a \}.
\end{equation}
In the regular case $v \in L^2([0,\pi])$ these asymptotics of the
fundamental solutions and the Lyapunov function $\Delta $  of the
Hill--Schr\"odinger operator could be found in \cite{Mar}, p. 32,
Formula (1.3.11), or pp. 252-253, Formulae ($3.4.23^\prime$),
(3.4.26).\vspace{3mm}

Consider the operator $L(v), $ in the domain
\begin{equation}
\label{0.47} D(L(v)) =
 \left  \{ y \in H^1 (\mathbb{R}):
 \;\; y^\prime - Qy \in L^2 (\mathbb{R}) \cap W^1_{1,loc} (\mathbb{R}), \;\;
 \ell_Q (y) \in L^2 (\mathbb{R})   \right \},
\end{equation}
defined by
\begin{equation}
\label{0.48} L(v) y = \ell_Q (y), \quad \text{with}  \;\; \ell_Q
(y)
 = - (y^\prime -Qy)^\prime - Qy^\prime,
\end{equation}
where $v$ and $Q$ are as in Proposition~\ref{prop01}.

\begin{Theorem}
\label{thm1} Let   $ v \in
H^{-1}_{loc} (\mathbb{R}))$ be $\pi$--periodic.
Then

(a) the domain $D(L(v)) $ is dense in $L^2
(\mathbb{R}); $

(b) the operator $L(v)$ is closed, and its conjugate operator is
\begin{equation}
\label{0.50} (L(v))^* = L(\overline{v});
\end{equation}
(In particular,  if $v$ is real--valued, then the operator $L(v)$
is self--adjoint.)

(c) the spectrum $Sp (L(v))$ of the operator $L(v) $ is
continuous, and moreover,
\begin{equation}
\label{0.51} Sp (L(v)) = \{\lambda \in \mathbb{C} \; |  \quad
\exists \theta  \in [0,2\pi): \;\; \Delta (\lambda) = 2 \cos
\theta \}.
\end{equation}
\end{Theorem}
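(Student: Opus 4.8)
The plan is to prove the three parts of Theorem~\ref{thm1} by combining the Floquet analysis of the system \eqref{0.44} with the abstract duality result of Lemma~\ref{lem001} and the resolvent construction of Lemma~\ref{lem01}(c). The unifying observation is that, after the normalization \eqref{0.39}, solving $(L(v)-\lambda)y=f$ is equivalent to solving the non-homogeneous system \eqref{0.31} with $C$ replaced by $-\lambda$ and with $u=y'-Qy$; hence the analytic facts about when this system has an $L^2(\mathbb{R})$-solution translate directly into statements about $\operatorname{Sp}(L(v))$ and $D(L(v))$.

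\textbf{Step 1 (resolvent set and formula \eqref{0.51}).} First I would fix $\lambda\in\mathbb{C}$ and analyze the roots $\rho^\pm=e^{\pm\tau\pi}$ of the characteristic equation $\rho^2-\Delta(Q,\lambda)\rho+1=0$. The roots lie on the unit circle (i.e. $\alpha=0$ in \eqref{0.35}) precisely when $\Delta(Q,\lambda)=2\cos\theta$ for some $\theta\in[0,2\pi)$, so I would set $\Sigma:=\{\lambda:\exists\theta,\ \Delta(\lambda)=2\cos\theta\}$. If $\lambda\notin\Sigma$, then $\alpha>0$ and Lemma~\ref{lem01}(c) produces the operator $R_1:L^2(\mathbb{R})\to W^1_2(\mathbb{R})$, which by construction sends $f$ to the unique $y\in H^1(\mathbb{R})$ with $y'-Qy\in L^2\cap W^1_{1,loc}$ solving $(L(v)-\lambda)y=f$; uniqueness is exactly Lemma~\ref{lem01}(a). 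Thus $(L(v)-\lambda)^{-1}=R_1$ is bounded on $L^2(\mathbb{R})$, so $\lambda\notin\operatorname{Sp}(L(v))$, giving $\operatorname{Sp}(L(v))\subset\Sigma$. Conversely, if $\lambda\in\Sigma$ then $\alpha=0$: Lemma~\ref{lem01}(b) furnishes $f\in L^2(\mathbb{R})$ (in fact compactly supported) for which $(L(v)-\lambda)y=f$ has no $L^2$-solution, so $L(v)-\lambda$ is not surjective and $\lambda\in\operatorname{Sp}(L(v))$. This proves \eqref{0.51}. That the spectrum is \emph{continuous} (no point spectrum, no residual spectrum) follows because Lemma~\ref{lem01}(a) shows $L(v)-\lambda$ is always injective (an eigenfunction would be a nonzero $L^2$-solution of the homogeneous system), ruling out point spectrum, and part (b) below—$(L(v)-\lambda)^*=L(\overline v)-\overline\lambda$ is also injective—rules out residual spectrum; hence every $\lambda\in\operatorname{Sp}(L(v))$ is in the continuous spectrum. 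This is part (c), and it already uses (b).

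\textbf{Step 2 (density and adjoint, parts (a)--(b)).} The key is to verify the hypotheses of Lemma~\ref{lem001} with $H=L^2(\mathbb{R})$, $A=L(v)$ on $D(L(v))$, and $B=L(\overline v)$ on $D(L(\overline v))$. The symmetry identity \eqref{0.390}, namely $(L(v)y,g)=(y,L(\overline v)g)$ for $y\in D(L(v))$, $g\in D(L(\overline v))$, is an integration-by-parts computation: writing $\ell_Q(y)=-(y'-Qy)'-Qy'$ and $\ell_{\overline Q}(g)=-(g'-\overline Q g)'-\overline Q g'$, one integrates by parts twice over $\mathbb{R}$; the boundary terms at $\pm\infty$ vanish because $y,g\in H^1(\mathbb{R})$ and $y'-Qy,\ g'-\overline Qg\in L^2(\mathbb{R})$, so these functions and their quasi-derivatives tend to $0$ along suitable sequences. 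For the surjectivity hypothesis, pick any $\lambda$ with $\lambda\notin\Sigma$ and $\overline\lambda\notin\Sigma$ (possible since $\Sigma$ is a proper closed subset—e.g. by \eqref{00.46} it misses points with large $|\operatorname{Im} z|$—and $\Sigma$ is symmetric-conjugate by \eqref{0.46}, but in any case one just needs one such $\lambda$); by Step 1, $L(v)-\lambda$ is surjective, and likewise $L(\overline v)-\overline\lambda$ is surjective. Lemma~\ref{lem001}(i) then gives that $D(L(v))$ is dense, which is (a), and Lemma~\ref{lem001}(ii) gives $(L(v))^*=L(\overline v)$, which is \eqref{0.50}; self-adjointness when $v$ is real is immediate since then $\overline v=v$. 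Finally, $L(v)$ is closed because it has a densely defined adjoint (an operator with a densely defined adjoint is closable and, since $(L(v))^*=L(\overline v)$ has adjoint $L(v)$ again by symmetry of the argument, $L(v)=(L(\overline v))^*$ is closed as an adjoint operator).

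\textbf{Main obstacle.} The genuinely delicate point is the integration-by-parts identity \eqref{0.390} and, relatedly, the claim that the $R_1$ from Lemma~\ref{lem01}(c) lands exactly in $D(L(v))$ as defined in \eqref{0.47}—i.e. that there is no mismatch between ``$L^2$-solution of the system'' and ``element of the operator domain.'' One must be careful that $Q\in L^2_{loc}$ only, so $Qy'$ and $Qy$ are merely $L^1_{loc}$, and the quasi-derivative $u=y'-Qy$ is the right object carrying the absolute continuity; the vanishing of boundary terms at infinity needs the decay of $u$ and $y$ in $L^2$, not pointwise decay, so one argues via a sequence $T_n\to\infty$ along which $y(T_n),u(T_n)\to0$. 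Everything else is bookkeeping once Lemmas~\ref{lem01} and \ref{lem001} are in hand.
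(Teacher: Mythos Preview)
Your proposal is correct and follows essentially the same route as the paper: verify the formal adjoint identity $(L(v)y,h)=(y,L(\overline v)h)$ by integrating by parts along sequences going to $\pm\infty$ (exactly your ``main obstacle''), invoke Lemma~\ref{lem01}(c) to get surjectivity of $L(v)-\lambda$ and $L(\overline v)-\overline\lambda$ for some $\lambda$ with $\Delta(Q,\lambda)\notin[-2,2]$ (using \eqref{0.46} to pass to $\overline Q,\overline\lambda$), and then apply Lemma~\ref{lem001} to obtain density and the adjoint; part~(c) comes from Lemma~\ref{lem01}(a),(b). Your treatment is slightly more explicit than the paper's in spelling out why the spectrum is purely continuous (no point spectrum by Lemma~\ref{lem01}(a), no residual spectrum since the adjoint $L(\overline v)-\overline\lambda$ is likewise injective), but the ingredients and their assembly are the same.
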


{\em Remark.} In the case of $L^2 $--potential $v$ this result is
known (see Rofe--Beketov \cite{R-B,R-BK} and V. Tkachenko
\cite{Tk64}).

\begin{proof}
Firstly, we show that the operators $L(v) $ and $L(\overline{v}) $
are formally adjoint, i.e.,
\begin{equation}
\label{0.52} \left (L(v)y,h \right ) = \left (f, L(\overline{v})
 h \right ) \quad \text{if} \;\; y \in
D(L(v)), \;\; h \in D(L(\overline{v})).
\end{equation}
Since $y^\prime - Q y$ and $\overline{h}$ are continuous $L^2
(\mathbb{R})$--functions, their product is a continuous $L^1
(\mathbb{R})$--function, so
 we have $$ \liminf_{x\to \pm \infty}
\left | (y^\prime - Q y) \overline{h} \right | (x) = 0.$$
Therefore, there exist two sequences of real numbers $c_n \to
-\infty $ and $d_n \to \infty $ such that $$ \left ((y^\prime - Q
y) \overline{h} \right ) (c_n) \to 0, \quad \left ((y^\prime - Q
y) \overline{h} \right ) (d_n) \to  0 \quad \text{as} \;\; n \to
\infty.$$ Now, we have  $$ \left (L(v) y, h \right ) =
\int_{-\infty}^\infty  \ell_Q (y) \overline{h} dx = \lim_{n \to
\infty} \int_{c_n}^{d_n} \left (-(y^\prime -Qy)^\prime
\overline{h} - Qy^\prime \overline{h}\right ) dx $$ $$ = \lim_{n
\to \infty} \left ( -(y^\prime -Qy)\overline{h}\text{\huge
$\vert$}_{c_n}^{d_n} + \int_{c_n}^{d_n} (y^\prime -Qy)
\overline{h^\prime} dx - \int_{c_n}^{d_n} Qy^\prime \overline{h}
dx \right )  $$ $$ = 0 + \int_{-\infty}^\infty \left ( y^\prime
\overline{h^\prime} - Q y \overline{h^\prime} - Q y^\prime
\overline{h} \right ) dx. $$

The same argument shows that $$ \int_{-\infty}^\infty \left (
y^\prime \overline{h^\prime} - Q y \overline{h^\prime} - Q
y^\prime \overline{h} \right ) dx = \left (y, L(\overline{v}) h
\right ), $$ which completes the proof of (\ref{0.52}).

If the roots of the characteristic equation $\rho^2 - \Delta
(Q,\lambda ) \rho + 1 = 0$ lie on the unit circle $
\{e^{i\theta}, \theta \in [0,2\pi)\},$ then they are of the form
$e^{\pm i\theta},$ so we have
\begin{equation}
\label{0.54} \Delta (Q,\lambda) = e^{i\theta} + e^{-i\theta} = 2
\cos \theta.
\end{equation}
Therefore, if $\Delta (Q,\lambda ) \not \in [-2,2],$ then the
roots of the characteristic equation
 lie outside of the unit circle  $ \{e^{i\theta},
\theta \in [0,2\pi)\}.$ If so, by part (c) of Lemma~\ref{lem01},
the operator $L(v) - \lambda$
 maps bijectively $D(L(v))$ onto $L^2(\mathbb{R}),$
and its inverse operator $$ (L(v))-\lambda)^{-1}: \;
L^2(\mathbb{R}) \to D(L(v)) $$ is a continuous linear operator.
Thus,
\begin{equation}
\label{0.56}
 \Delta (Q,\lambda)  \not \in [-2,2] \Rightarrow
 (L(v)-\lambda)^{-1}: \;
L^2(\mathbb{R}) \to D(L(v)) \quad {exists}.
  \end{equation}

Next we apply Lemma \ref{lem001} with $A= L(v)$ and
$B=L(\overline{v}).$  Choose $\lambda \in \mathbb{C} $ so that $
\Delta (Q, \lambda) \not \in [-2,2] $ (in view of (\ref{00.46}),
see the remark before Theorem~\ref{thm1}, $ \Delta (Q, \lambda) $
is a non--constant entire function, so such a choice is possible).
Then, in view of (\ref{0.46}), we have that $ \Delta
(\overline{Q}, \overline{\lambda}) \not \in [-2,2] $ also. In view
of the above discussion, this means that the operator $ L(v) -
\lambda $ maps bijectively $D(L(v)) $ onto $L^2(\mathbb{R}) $ and
$L(\overline{v}) - \overline{\lambda} $ maps bijectively
$D(L(\overline{v})) $ onto $L^2(\mathbb{R}). $ Thus, by
Lemma~\ref{lem001}, $D(L(v))$ is dense in $L^2 (\mathbb{R})$ and
$L(v)^* = L(\overline{v}), $ i.e., (a) and (b) hold.

 Finally, in view of (\ref{0.56}), (c) follows readily from
 part (b) of Lemma \ref{lem01}.

\end{proof}

3. Theorem \ref{thm1} shows that the spectrum of the operator
$L(v) $ is described by the equation ({\ref{0.51}). As we are
going to explain below, this fact implies that the spectrum $Sp
(L(v))$ could be described in terms of the spectra of the
operators $L_\theta = L_\theta (v), \; \theta \in [0,\pi ], $ that
arise from the same differential expression $\ell = \ell_Q  $ when
it is considered on the interval $[0,\pi]$ with the following
boundary conditions:
\begin{equation}
\label{0.70}
y(\pi) = e^{i\theta} y(0), \quad (y^\prime - Qy)(\pi) = e^{i\theta}
(y^\prime - Qy)(0).
\end{equation}
The domains $D(L_\theta) $ of the operators
$L_\theta $ are given by
\begin{equation}
\label{0.71} D(L_\theta) =\left  \{y \in H^1 \; :  \;\; y^\prime -
Qy \in  W^1_1 ([0,\pi]),
 \;\; \text{(\ref{0.70})} \; holds,
 \;\; \ell (y) \in H^0  \right \},
\end{equation}
where $$ H^1 = H^1 ([0,\pi]), \quad H^0 = L^2 ([0,\pi]). $$

We set
\begin{equation}
\label{0.72}
L_\theta  ( y )= \ell (y),  \quad  y \in D(L_\theta).
\end{equation}
Notice that if $y \in H^1 ([0,\pi]), $ then $ \ell_Q (y) = f \in
L^2 ([0,\pi]) $ if and only if $u = y^\prime - Q y \in W^1_1
([0,\pi]) $ and the pair $(y,u) $ is a solution of the
non--homogeneous system (\ref{0.31}).

\begin{Lemma}
\label{lem02} Let $ \begin{pmatrix} y_1 \\ u_1     \end{pmatrix} $
and $ \begin{pmatrix} y_2 \\ u_2     \end{pmatrix} $ be the
solutions of the homogeneous system (\ref{0.32}) which satisfy
\begin{equation}
\label{0.70a}
\begin{pmatrix} y_1 (0) \\ u_1 (0)     \end{pmatrix}
=  \begin{pmatrix} 1 \\ 0     \end{pmatrix}, \qquad
\begin{pmatrix} y_2 (0) \\ u_2 (0)     \end{pmatrix}
=  \begin{pmatrix} 0 \\ 1     \end{pmatrix}.
\end{equation}
 If
\begin{equation}
\label{0.73} \Delta = y_1 (\pi) + u_2 (\pi) \neq 2 \cos \theta,
\qquad  \theta \in [0,\pi],
\end{equation}
then the non--homogeneous system (\ref{0.31}) has, for each $f \in
H^0,$ a unique solution $(y,u) = (R_1 (f), R_2 (f))$ such that
\begin{equation}
\label{0.74}
\begin{pmatrix}
y(\pi)\\u(\pi)
\end{pmatrix}
= e^{i\theta}
\begin{pmatrix}
y(0)\\u(0)
\end{pmatrix}.
\end{equation}
Moreover, $R_1  $ is a linear continuous operator from $H^0$ into
$H^1, $ and $R_2 $ is a linear continuous operator in $H^0 $ with
a range in $W^1_1 ([0,\pi]).$
\end{Lemma}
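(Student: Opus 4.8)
The plan is to use the variation-of-parameters formula, exactly as in the proof of Lemma~\ref{lem01}(c), but now organized to meet the boundary conditions (\ref{0.70}) on the finite interval $[0,\pi]$ rather than decay at $\pm\infty$. First I would write the general solution of the non-homogeneous system (\ref{0.31}) in the form
$$ y(x) = v^+(x) y_1(x) + v^-(x) y_2(x), \quad u(x) = v^+(x) u_1(x) + v^-(x) u_2(x), $$
where $(y_1,u_1)$ and $(y_2,u_2)$ are the fundamental solutions in (\ref{0.70a}) (their Wronskian is identically $1$, as noted after (\ref{0.33})), and where $v^\pm$ satisfy the usual system $\frac{dv^+}{dx} y_1 + \frac{dv^-}{dx} y_2 = 0$, $\frac{dv^+}{dx} u_1 + \frac{dv^-}{dx} u_2 = f$; solving gives $v^+(x) = -\int_0^x y_2(t) f(t)\,dt + c^+$ and $v^-(x) = \int_0^x y_1(t) f(t)\,dt + c^-$ with free constants $c^\pm$. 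Note $(y(0),u(0)) = (c^+, c^-)$.

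Next I would impose (\ref{0.74}). Writing $F^+ = \int_0^\pi y_2(t) f(t)\,dt$ and $F^- = \int_0^\pi y_1(t) f(t)\,dt$, the vector $(y(\pi), u(\pi))^{t}$ equals $M (c^+, c^-)^{t} + M(-F^+, F^-)^{t}$, where $M$ is the monodromy matrix with entries $y_j(\pi), u_j(\pi)$. Condition (\ref{0.74}) then reads $(M - e^{i\theta} I)(c^+,c^-)^{t} = -M(-F^+,F^-)^{t}$. Since $\det M = 1$, the eigenvalues of $M$ are $\rho$ and $1/\rho$ with $\rho + 1/\rho = \Delta$; hence $e^{i\theta}$ is an eigenvalue of $M$ exactly when $\Delta = e^{i\theta} + e^{-i\theta} = 2\cos\theta$. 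The hypothesis (\ref{0.73}) guarantees $\det(M - e^{i\theta} I) = 1 - \Delta e^{i\theta} + e^{2i\theta} = e^{i\theta}(e^{i\theta} + e^{-i\theta} - \Delta) \ne 0$, so $M - e^{i\theta} I$ is invertible and $(c^+, c^-)$ is uniquely determined as a linear function of $(F^+, F^-)$, hence a bounded linear functional of $f \in H^0$. This gives both existence and uniqueness of the solution $(y,u) = (R_1(f), R_2(f))$.

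Finally I would check the continuity and regularity assertions. On $[0,\pi]$ the fundamental solutions $y_1, y_2, u_1, u_2$ are fixed functions in $W^1_1([0,\pi])$ (with $y_1, y_2 \in H^1$), hence bounded, so $|F^\pm| \le \mathrm{const}\cdot\|f\|_{H^0}$ by Cauchy--Schwarz, and therefore $|c^\pm| \le \mathrm{const}\cdot\|f\|_{H^0}$ and $\|v^\pm\|_{C([0,\pi])} \le \mathrm{const}\cdot\|f\|_{H^0}$. Consequently $\|y\|_{C([0,\pi])} \le \mathrm{const}\cdot\|f\|$; the derivative $y' = Qy + u$ satisfies $\|y'\|_{L^2} \le \|Q\|_{L^2}\|y\|_{C} + \|u\|_{L^2} \le \mathrm{const}\cdot\|f\|$, using $Q \in L^2([0,\pi])$, so $R_1: H^0 \to H^1$ is bounded. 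Likewise $\|u\|_{C([0,\pi])} \le \mathrm{const}\cdot\|f\|$ gives boundedness of $R_2$ in $H^0$, and $u' = (-\lambda - Q^2)y - Qu + f \in L^1([0,\pi])$ places the range of $R_2$ in $W^1_1([0,\pi])$, as required; the fact that $(y,u)$ solves (\ref{0.31}) already shows $\ell_Q(y) = f \in H^0$.

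The proof is essentially routine once the structure is set up; the only point needing care is the determinant computation showing that (\ref{0.73}) is precisely the condition for $M - e^{i\theta} I$ to be invertible — this is where the normalization $\det M = 1$ (the constancy of the Wronskian) enters decisively, and it is the step I would present most carefully.
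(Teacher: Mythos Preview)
Your proof is correct and follows essentially the same route as the paper's own argument: variation of parameters with the fundamental pair $(y_1,u_1),(y_2,u_2)$, reduction of the boundary condition (\ref{0.74}) to a $2\times 2$ linear system whose determinant is $e^{i\theta}(2\cos\theta-\Delta)$, and then straightforward continuity estimates. The only cosmetic differences are that the paper writes out the linear system componentwise rather than as $(M-e^{i\theta}I)\binom{c^+}{c^-}=\cdots$, and it verifies $R_1:H^0\to H^1$ via the variation-of-parameters identity $y'=v^+y_1'+v^-y_2'$ rather than via the system equation $y'=Qy+u$ as you do; both arguments are equally valid.
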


\begin{proof}
By the variation of parameters method, every solution of the
non--homogeneous system (\ref{0.31}) has the form
\begin{equation}
\label{0.75}
\begin{pmatrix} y (x) \\ u (x)     \end{pmatrix}
= v_1 (x)
\begin{pmatrix} y_1 (x) \\ u_1 (x)     \end{pmatrix}
+ v_2 (x) \begin{pmatrix} y_2 (x) \\ u_2 (x)     \end{pmatrix},
\end{equation}
where
\begin{equation}
\label{0.77} v_1 (x) = -\int_0^x  y_2 (x) f(t)dt +  C_1,     \quad
v_2 (x) = \int_0^x  y_1 (x) f(t) dt +  C_2.
\end{equation}
We set for convenience
\begin{equation}
\label{0.79} m_1 (f) = -\int_0^\pi y_2 (t) f(t) dt, \quad m_2 (f)
= \int_0^\pi y_1 (t) f(t) dt.
\end{equation}
By (\ref{0.75})--(\ref{0.79}), the condition (\ref{0.74}) is
equivalent to
\begin{equation}
\label{0.80} \left (m_1 (f) + C_1 \right ) \begin{pmatrix} y_1
(\pi)
\\ u_1 (\pi) \end{pmatrix}+ \left (m_2 (f) + C_2 \right ) \begin{pmatrix} y_2
(\pi) \\ u_2 (\pi) \end{pmatrix} = e^{i\theta}  \begin{pmatrix}
C_1 \\ C_2
\end{pmatrix}.
\end{equation}
This is a system of two linear equations in two unknowns $C_1 $
and $C_2.$ The corresponding determinant is equal to

$$ \det \begin{pmatrix} y_1 (\pi) - e^{i\theta} & y_2 (\pi) \\ u_1
(\pi) &   u_2 (\pi) - e^{i\theta }
\end{pmatrix}
=
1  + e^{2i\theta} - \Delta \cdot e^{i\theta} = e^{i \theta} (2
\cos \theta - \Delta). $$ Therefore, if (\ref{0.73}) holds, then
the system (\ref{0.80}) has a unique solution  $\begin{pmatrix}
C_1 \\ C_2  \end{pmatrix}, $ where $C_1 = C_1 (f) $ and $C_2 = C_2
(f) $ are linear combinations of $ m_1 (f) $ and $m_2 (f).$
 With
these values of $C_1 (f) $ and $C_2 (f) $ we set $$ R_1 (f) = v_1
\cdot y_1 + v_2 \cdot y_2 , \quad R_2 (f) = v_1  \cdot u_1 + v_2
\cdot u_2. $$ By (\ref{0.77}) and  (\ref{0.79}), the Cauchy
inequality implies $$ |v_1 (x)| \leq  \int_0^x | y_2 (t) f(t)| dt
+ \left |C_1 (f) \right | \leq  A \cdot \|f\|, \quad |v_2 (x)|
\leq B \cdot \|f\|,$$ where $A$ and $B$ are constants.
 From here it follows that  $R_1 $ and $R_2 $ are continuous linear
operators in $H^0.$ Since $$ \frac{d}{dx} R_1 (f) = v_1
\frac{d y_1}{dx} + v_2 \frac{dy_2}{dx}, \quad R_2 (f) = v_1
\frac{du_1}{dx} + v_2 \frac{d u_2}{dx} + f, $$ it follows also that
$R_1 $ acts continuously from $H^0 $ into $H^1, $ and $R_2 $ has
range in $W^1_1 ([0,\pi]),$ which completes the proof.

\end{proof}

\begin{Theorem}
\label{thm2}
Suppose $ v \in H^{-1}_{loc} (\mathbb{R}))$ is
$\pi$--periodic.
Then,

(a) for each $\theta \in [0, \pi] , $ the domain $D(L_\theta (v) )
\in (\ref{0.71}) $ is dense in $H^0; $

(b) the operator $L_\theta (v)  \in (\ref{0.72}) $
is closed, and its conjugate
operator is
\begin{equation}
\label{0.104}
L_\theta (v) ^*  =
L_\theta (\overline{v}).
\end{equation}
In particular,  if $v$ is real--valued, then the operator
$L_\theta (v)$ is self--adjoint.

(c) the spectrum $Sp (L_\theta (v) )$ of
the operator $L_\theta (v)  $ is
discrete, and moreover,
\begin{equation}
\label{0.105} Sp (L_\theta (v) ) = \{\lambda \in \mathbb{C} \; :
\;\; \Delta (\lambda) = 2 \cos \theta \}.
\end{equation}
\end{Theorem}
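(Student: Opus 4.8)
The plan is to mirror the proof of Theorem~\ref{thm1}, with the interval $[0,\pi]$ and the boundary conditions $Per^\theta$ of (\ref{0.70}) replacing the whole line, and with Lemma~\ref{lem02} playing the role that part~(c) of Lemma~\ref{lem01} played there. The first step is formal adjointness: for $y\in D(L_\theta(v))$ and $h\in D(L_\theta(\overline v))$ one should have $(L_\theta(v)y,h)=(y,L_\theta(\overline v)h)$ in $H^0$. Writing $\ell_Q(y)=-(y'-Qy)'-Qy'$ and integrating by parts twice on $[0,\pi]$ --- legitimate because $y,h\in H^1([0,\pi])$ and the quasi-derivatives $y'-Qy,\ h'-\overline Qh$ lie in $W^1_1([0,\pi])\subset C[0,\pi]$, so all boundary values below are meaningful --- one is left with
\[
(L_\theta(v)y,h)-(y,L_\theta(\overline v)h)=\Big[\,y\,\overline{(h'-\overline Qh)}-(y'-Qy)\,\overline h\,\Big]_0^\pi .
\]
Both $y$ and $h$ satisfy (\ref{0.70}) with the \emph{same} factor $e^{i\theta}$ (the domain $D(L_\theta(\overline v))$ imposes $h(\pi)=e^{i\theta}h(0)$ and $(h'-\overline Qh)(\pi)=e^{i\theta}(h'-\overline Qh)(0)$), and $e^{i\theta}\,\overline{e^{i\theta}}=1$, so the bracket takes the same value at $x=\pi$ as at $x=0$ and the right-hand side vanishes.

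Next I would record the resolvent. Replacing the constant $C$ by $-\lambda$ turns $\ell_Q$ into the operator whose homogeneous system is (\ref{0.44}), with monodromy matrix of trace $\Delta(Q,\lambda)$; applying Lemma~\ref{lem02} to (\ref{0.44}) in place of (\ref{0.32}) gives that, whenever $\Delta(Q,\lambda)\neq 2\cos\theta$, for each $f\in H^0$ there is a \emph{unique} $y\in D(L_\theta(v))$ with $(L_\theta(v)-\lambda)y=f$, namely $y=R_1(f)$, and $R_1\colon H^0\to H^1\hookrightarrow H^0$ is bounded --- in fact compact, since $H^1([0,\pi])\hookrightarrow H^0$ compactly. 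Uniqueness gives injectivity, so $L_\theta(v)-\lambda$ is boundedly invertible for such $\lambda$. By (\ref{00.46}) the entire function $\lambda\mapsto\Delta(Q,\lambda)$ is non-constant, so I can fix $\lambda_0$ with $\Delta(Q,\lambda_0)\neq 2\cos\theta$; since $2\cos\theta$ is real, (\ref{0.46}) gives $\Delta(\overline Q,\overline{\lambda_0})=\overline{\Delta(Q,\lambda_0)}\neq 2\cos\theta$ too. Hence $L_\theta(v)-\lambda_0$ and $L_\theta(\overline v)-\overline{\lambda_0}$ are surjective, and Lemma~\ref{lem001} (with $H=H^0$, $A=L_\theta(v)$, $B=L_\theta(\overline v)$, using the formal adjointness above) yields that $D(L_\theta(v))$ and $D(L_\theta(\overline v))$ are dense in $H^0$ and that $L_\theta(v)^*=L_\theta(\overline v)$; closedness of $L_\theta(v)=L_\theta(\overline v)^*$ is automatic. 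This gives (a) and (b), and if $v$ is real then $L_\theta(v)$ is self-adjoint.

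For (c), the resolvent statement already shows $Sp(L_\theta(v))\subseteq\{\lambda:\Delta(Q,\lambda)=2\cos\theta\}$. For the reverse inclusion, if $\Delta(Q,\lambda)=2\cos\theta$ then the roots of $\rho^2-\Delta(Q,\lambda)\rho+1=0$ are $e^{\pm i\theta}$, so the monodromy matrix $M(\lambda)$ of (\ref{0.44}) has $e^{i\theta}$ as an eigenvalue; taking an eigenvector $(a,b)^{T}\neq 0$ and the solution $(y,u)$ of (\ref{0.44}) with $(y(0),u(0))=(a,b)$, one gets $(y(\pi),u(\pi))^{T}=M(\lambda)(a,b)^{T}=e^{i\theta}(a,b)^{T}$, so $y\neq 0$ lies in $D(L_\theta(v))$ and $(L_\theta(v)-\lambda)y=0$; thus $\lambda$ is an eigenvalue. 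This proves (\ref{0.105}); since its right-hand side is the zero set of a non-constant entire function (equivalently, since the resolvent is compact), the spectrum is discrete.

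I expect the only genuinely delicate point to be the boundary-term cancellation in the first step: one has to see that the adjoint boundary conditions for $L_\theta(\overline v)$ are again $Per^\theta$ with the \emph{same} $e^{i\theta}$, which is exactly what makes $e^{i\theta}\overline{e^{i\theta}}=1$ annihilate the bracket, together with the routine bookkeeping identifying the trace of the monodromy matrix of (\ref{0.44}) with $\Delta(Q,\lambda)$ so that Lemma~\ref{lem02} applies without change. Everything else is a direct transcription of the proof of Theorem~\ref{thm1}.
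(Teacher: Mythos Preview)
Your proof is correct and follows essentially the same route as the paper's: formal adjointness via integration by parts with the boundary term killed by $e^{i\theta}\overline{e^{i\theta}}=1$, then Lemma~\ref{lem001} combined with Lemma~\ref{lem02} (applied to the system (\ref{0.44})) and the identity (\ref{0.46}) to obtain density, closedness and (\ref{0.104}), and finally the eigenvector of the monodromy matrix to produce an eigenfunction when $\Delta(Q,\lambda)=2\cos\theta$. Your version is in fact slightly more explicit than the paper's --- you write out the boundary bracket and note the compactness of the resolvent via $H^1\hookrightarrow H^0$ --- but the argument is the same.
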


\begin{proof}
Firstly, we show that the operators $L_\theta (v)  $ and $L_\theta (\overline{v}) $
are formally adjoint, i.e.,
\begin{equation}
\label{0.107} \left (L_\theta (v)y,h \right ) =
\left (f, L_\theta(\overline{v})
 h \right ) \quad \text{if} \;\; y \in
D(L_\theta (v)), \;\; h \in D(L_\theta(\overline{v})).
\end{equation}

Indeed, in view of (\ref{0.70}),
 we have  $$ \left (L_\theta(v) y, h
\right ) =\frac{1}{\pi} \int_0^\pi  \ell_Q (y) \overline{h} dx =\frac{1}{\pi}
\int_0^\pi\left (-(y^\prime -Qy)^\prime
\overline{h} - Qy^\prime \overline{h}\right ) dx $$
$$ =  - \frac{1}{\pi} (y^\prime -Qy)\overline{h}\text{\huge
$\vert$}_{0}^{\pi} +
\frac{1}{\pi} \int_{0}^{\pi} (y^\prime -Qy)
\overline{h^\prime} dx - \int_{0}^{\pi} Qy^\prime \overline{h}
dx   $$ $$ = 0 +
\frac{1}{\pi} \int_0^\pi  \left ( y^\prime
\overline{h^\prime} - Q y \overline{h^\prime} - Q y^\prime
\overline{h} \right ) dx. $$

The same argument shows that $$ \frac{1}{\pi} \int_0^\pi \left (
y^\prime \overline{h^\prime} - Q y \overline{h^\prime} - Q
y^\prime \overline{h} \right ) dx = \left (y, L_\theta
(\overline{v}) h \right ), $$ which completes the proof of
(\ref{0.107}).

Now we apply Lemma \ref{lem001} with $A= L_\theta (v) $ and
$B=L_\theta (\overline{v}).$  Choose $\lambda \in \mathbb{C} $ so
that $ \Delta (Q, \lambda) \neq 2 \cos \theta $ (as one can easily
see from the remark before Theorem~\ref{thm1}, $ \Delta (Q,
\lambda) $ is a non--constant entire function, so such a choice is
possible). Then, in view of (\ref{0.46}), we have that $ \Delta
(\overline{Q}, \overline{\lambda}) \neq 2 \cos \theta $ also. By
Lemma~\ref{lem02},
 $ L_\theta (v) - \lambda $ maps bijectively $D(L_\theta(v)) $ onto
$H^0$ and $L_\theta(\overline{v}) - \overline{\lambda} $ maps
bijectively $D(L_\theta(\overline{v})) $ onto $H^0. $ Thus, by
Lemma~\ref{lem001}, $D(L_\theta (v) $ is dense in $H^0$ and
$L_\theta (v) ^* = L_\theta (\overline{v}), $ i.e., (a) and (b)
hold.

If $\Delta (Q, \lambda ) = 2 \cos \theta, $ then $e^{i \theta} $
is a root of the characteristic equation (\ref{0.34}), so there is
a special solution $(\varphi, \psi)$ of the homogeneous system
(\ref{0.32}) (considered with $C= - \lambda$) such that
(\ref{0.341}) holds with $\rho =e^{i\theta}.$ But then $ \varphi
\in D(L_\theta (v) ) $ and $L_\theta (v)  \varphi = \lambda
\varphi, $ i.e., $\lambda $ is an eigenvalue of $L_\theta (v) . $
In view of Lemma~\ref{lem02}, this means that (\ref{0.105}) holds.
Since $\Delta (Q, \lambda )$ is a non--constant entire function
(as one can easily see from the remark before
Theorem~\ref{thm001}) the set on the right in (\ref{0.105}) is
discrete. This completes the proof of (c).
\end{proof}

\begin{Corollary}
In view of Theorem \ref{thm1} and Theorem \ref{thm2},
we have
\begin{equation}
\label{cor} Sp \, (L(v)) = \bigcup_{\theta \in [0,\pi]} Sp \,
(L_\theta (v)).
\end{equation}
\end{Corollary}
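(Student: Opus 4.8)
The plan is to prove the Corollary, namely that $Sp(L(v)) = \bigcup_{\theta\in[0,\pi]} Sp(L_\theta(v))$, by combining the two spectral descriptions already at our disposal: Theorem~\ref{thm1}(c), which gives
$$ Sp(L(v)) = \{\lambda\in\mathbb{C} : \exists\,\theta\in[0,2\pi),\ \Delta(\lambda) = 2\cos\theta\}, $$
and Theorem~\ref{thm2}(c), which gives for each fixed $\theta\in[0,\pi]$
$$ Sp(L_\theta(v)) = \{\lambda\in\mathbb{C} : \Delta(\lambda) = 2\cos\theta\}. $$
So the whole argument reduces to an elementary set-theoretic identity about the entire function $\Delta = \Delta(Q,\cdot)$ and the range of the cosine.

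First I would take the union of the right-hand sides of Theorem~\ref{thm2}(c) over $\theta\in[0,\pi]$:
$$ \bigcup_{\theta\in[0,\pi]} Sp(L_\theta(v)) = \bigcup_{\theta\in[0,\pi]}\{\lambda : \Delta(\lambda) = 2\cos\theta\} = \{\lambda : \exists\,\theta\in[0,\pi],\ \Delta(\lambda) = 2\cos\theta\}. $$
Then I would observe that $\{2\cos\theta : \theta\in[0,\pi]\} = [-2,2] = \{2\cos\theta : \theta\in[0,2\pi)\}$, since $\cos$ already attains every value in $[-1,1]$ as $\theta$ ranges over $[0,\pi]$ and ranges over exactly $[-1,1]$ over all of $[0,2\pi)$. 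Consequently both the set above and the set in Theorem~\ref{thm1}(c) are equal to $\{\lambda\in\mathbb{C} : \Delta(\lambda)\in[-2,2]\}$. Comparing with Theorem~\ref{thm1}(c) yields
$$ Sp(L(v)) = \{\lambda : \Delta(\lambda)\in[-2,2]\} = \bigcup_{\theta\in[0,\pi]} Sp(L_\theta(v)), $$
which is exactly (\ref{cor}).

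There is essentially no obstacle here: the substantive work has all been done in the proofs of Theorems~\ref{thm1} and~\ref{thm2} (the Floquet/resolvent analysis and Lemmas~\ref{lem01}, \ref{lem001}, \ref{lem02}). The only point that merits a word of care is the reduction from $\theta\in[0,2\pi)$ in (\ref{0.51}) to $\theta\in[0,\pi]$ in (\ref{cor}): this is harmless precisely because the characteristic equation $\rho^2-\Delta\rho+1=0$ depends on $\theta$ only through $2\cos\theta$, so the operators $L_\theta(v)$ and $L_{2\pi-\theta}(v)$ have the same spectrum, and restricting to the fundamental half-period $[0,\pi]$ already exhausts the range $[-2,2]$ of $2\cos\theta$. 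Hence the displayed chain of equalities completes the proof, and one could remark that the endpoints $\theta=0$ and $\theta=\pi$ correspond respectively to the periodic ($Per^+$) and anti-periodic ($Per^-$) problems discussed in the introduction.
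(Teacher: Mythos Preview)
Your proposal is correct and follows exactly the approach the paper intends: the Corollary is stated without proof, as an immediate consequence of the spectral descriptions in Theorem~\ref{thm1}(c) and Theorem~\ref{thm2}(c), and your argument simply spells out the elementary observation that $\{2\cos\theta:\theta\in[0,\pi]\}=[-2,2]=\{2\cos\theta:\theta\in[0,2\pi)\}$.
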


In the self--adjoint case  (i.e., when
$v,$ and therefore,  $Q$ are
real--valued) the spectrum $  Sp \, (L(v)) \subset \mathbb{R}$
has a band--gap structure.  This is a well--known
 result in the regular case where
$v $ is an $L^2_{loc} (\mathbb{R}) $--function.
Its generalization in the singular case
 was proved by R. Hryniv and
Ya. Mykytiuk \cite{HM01}.

In order to formulate that result more precisely, let us
consider the following boundary conditions (bc):

($\text{a}^*$) periodic $\quad Per^+: \quad y(\pi)= y(0), \;\left ( y^\prime
- Qy  \right ) (\pi)= \left ( y^\prime - Qy  \right ) (0);   $

($\text{b}^*$) antiperiodic $\quad  Per^- : \quad y(\pi)= - y(0), \;\left (
y^\prime - Qy  \right )(\pi)= - \left ( y^\prime - Qy  \right )
(0); $

Of course, in the case where $Q$ is a continuous function, $Per^+
$ and $Per^-$  coincide, respectively, with the classical periodic
boundary condition $ y(\pi) = y(0), \; y^\prime (\pi) = y^\prime
(0) $ or anti--periodic boundary condition $ y(\pi) = - y(0), \;
y^\prime (\pi) = - y^\prime (0) $ (see the related discussion in
Section 6.2).

The boundary conditions $Per^\pm $ are particular cases of
(\ref{0.74}), considered, respectively, for $\theta =0$ or $\theta
= \pi. $ Therefore, by Theorem~\ref{thm2}, for each of these two
boundary conditions, the differential expression ({\ref{001})
gives a rise of a closed (self adjoint for real $v$) operator
$L_{Per^\pm} $ in $H^0 = L^2 ([0,\pi]), $ respectively, with a
domain
\begin{equation}
\label{002} D (L_{Per^+}) =\{y \in H^1: \;\;y^\prime - Qy \in
W^1_1 ([0,\pi]), \;\; (\text{a}^*) \; holds, \; \; l(y) \in H^0
\},
\end{equation}
or
\begin{equation}
\label{003} D (L_{Per^-}) =\{y \in H^1: \;\; y^\prime - Qy \in
W^1_1 ([0,\pi]), \;\; (\text{b}^*) \; holds, \; \; l(y) \in H^0
\}.
\end{equation}

The spectra of the operators $L_{Per^\pm} $
are discrete.  Let us enlist their eigenvalues
in increasing order, by
using
even indices for the eigenvalues of $L_{Per^+} $
and odd indices for the eigenvalues of
$L_{Per^-} $  (the convenience of such enumeration
will be clear later):

\begin{equation}
\label{102} Sp \, (L_{Per^+}) =\{\lambda_0, \lambda_2^-,
\lambda_2^+, \lambda_4^-,
\lambda_4^+,  \lambda_6^-,
\lambda_6^+, \ldots
\},
\end{equation}
\begin{equation}
\label{103} Sp \, (L_{Per^-}) =\{
\lambda_1^-, \lambda_1^+, \lambda_3^-,
\lambda_3^+, \lambda_5^-, \lambda_5^+ \ldots
\}.
\end{equation}

\begin{Proposition}
\label{prop03} Suppose $v = C+ Q^\prime,$ where $ Q \in L^2_{loc}
(\mathbb{R}))$ is a $\pi$--periodic real valued function.
 Then, in the above notations, we have
\begin{equation}
\label{104} \lambda_0  < \lambda_1^- \leq \lambda_1^+ <
\lambda_2^- \leq \lambda_2^+ <
\lambda_3^- \leq \lambda_3^+ <
\lambda_4^- \leq \lambda_4^+ <
\lambda_5^- \leq \lambda_5^+ < \cdots.
\end{equation}
Moreover, the spectrum of the operator $L (v) $ is absolutely
continuous and has a band--gap structure: it is a union of closed
intervals separated by spectral gaps $$ (-\infty, \lambda_0), \;
(\lambda^-_1,\lambda^+_1), \; (\lambda^-_2,\lambda^+_2),  \cdots,
(\lambda^-_n,\lambda^+_n), \cdots. $$
\end{Proposition}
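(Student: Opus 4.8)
The plan is to deduce everything from the Hill discriminant $\Delta(\lambda)=\Delta(Q,\lambda)$ of (\ref{0.45}), using Theorem~\ref{thm2} and the Corollary. First I would record the basic facts: since $Q$ is real, (\ref{0.46}) makes $\Delta$ a non-constant entire function that is real on $\mathbb{R}$, and by Theorem~\ref{thm2}(c) together with (\ref{cor}),
$$ Sp(L_{Per^+})=\{\lambda:\Delta(\lambda)=2\},\quad Sp(L_{Per^-})=\{\lambda:\Delta(\lambda)=-2\},\quad Sp(L(v))=\{\lambda\in\mathbb{R}:\ -2\le\Delta(\lambda)\le2\}. $$
Each zero of $\Delta\mp2$ has order $1$ or $2$ (the problem is of second order), and by the asymptotics of the Remark following Theorem~\ref{thm1} (cf. \cite{SS03}) one has $\Delta(\lambda)=2\cos\pi z+o(1)$, $z=\sqrt\lambda$, so $\Delta>2$ on a half--line $(-\infty,s)$ — hence $\lambda_0:=\min\{\lambda:\Delta(\lambda)=2\}$ is well defined — while for $\lambda\to+\infty$ the graph of $\Delta$ oscillates and crosses the levels $\pm2$ at infinitely many points.

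The crux is the claim: \emph{if $\lambda^*\in\mathbb{R}$ and $-2<\Delta(\lambda^*)<2$, then $\Delta'(\lambda^*)\ne0$} — i.e. $\Delta$ has no critical point inside the open bands. Write $\Delta(\lambda^*)=2\cos\theta^*$ with $\theta^*\in(0,\pi)$; then the monodromy matrix $M(\lambda^*)$ has the two \emph{distinct} eigenvalues $e^{\pm i\theta^*}$, so $M(\lambda^*)-e^{i\theta^*}I$ has rank exactly $1$ and its adjugate is a nonzero (rank--one) matrix. In the construction of Lemma~\ref{lem02} the resolvent $(L_{\theta^*}(v)-\lambda)^{-1}$ is obtained by inverting $M(\lambda)-e^{i\theta^*}I$, i.e. by dividing the adjugate by $\det(M(\lambda)-e^{i\theta^*}I)=e^{i\theta^*}(2\cos\theta^*-\Delta(\lambda))$; since the adjugate does not vanish at $\lambda^*$, there is no cancellation, and the resolvent acquires at $\lambda^*$ a pole whose order equals that of the zero of $2\cos\theta^*-\Delta(\lambda)$, hence $\ge2$ if $\Delta'(\lambda^*)=0$. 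But $L_{\theta^*}(v)$ is self--adjoint by Theorem~\ref{thm2}(b), and the resolvent of a self--adjoint operator has only first--order poles; therefore $\Delta'(\lambda^*)\ne0$. This step carries the real content; I expect its delicate point to be exactly the verification that the formula of Lemma~\ref{lem02} really produces a pole of the full order of the determinant's zero, which is the rank--one non--vanishing just noted.

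Granting the claim, $\Delta$ is strictly monotone on each connected component of $\{-2<\Delta<2\}$. Combined with $\Delta>2$ near $-\infty$ and the oscillation at $+\infty$, a short inspection of the graph of $\Delta$ shows that $\{-2\le\Delta\le2\}$ is a disjoint union of nondegenerate closed intervals (bands) $[\lambda_0,\lambda_1^-],\ [\lambda_1^+,\lambda_2^-],\ [\lambda_2^+,\lambda_3^-],\ldots$ on which $\Delta$ moves monotonically from $2$ to $-2$ or from $-2$ to $2$, alternately, while the complementary intervals $(\lambda_n^-,\lambda_n^+)$ — possibly degenerate, in which case $\Delta$ has there a local extremum equal to $\pm2$ (a closed gap) — together with $(-\infty,\lambda_0)$ form the set $\{|\Delta|>2\}\cup(-\infty,\lambda_0)$. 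A tangency $\Delta'(\lambda_0)=0$ is excluded since it would make $\{\lambda_0\}$ an isolated point of $Sp(L(v))$, hence an $L^2(\mathbb{R})$--eigenvalue, contradicting Lemma~\ref{lem01}(a); the same local inspection at the endpoints gives the strict inequalities $\lambda_{n-1}^+<\lambda_n^-$ in (\ref{104}), while $\lambda_n^-\le\lambda_n^+$ merely records that a gap may close. Since by Theorem~\ref{thm2}(c) the multiplicity--counted set $\{\Delta=2\}\cup\{\Delta=-2\}$ equals $Sp(L_{Per^+})\cup Sp(L_{Per^-})$ and the attained values occur in the pattern $2;\,-2,-2;\,2,2;\,-2,-2;\ldots$, matching against the enumeration (\ref{102})--(\ref{103}) yields exactly (\ref{104}).

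Finally, $Sp(L(v))=\bigcup_{n\ge0}[\lambda_n^+,\lambda_{n+1}^-]$ (with $\lambda_0^+:=\lambda_0$) is the asserted union of closed intervals separated by the gaps $(-\infty,\lambda_0),(\lambda_1^-,\lambda_1^+),(\lambda_2^-,\lambda_2^+),\ldots$. For the absolute continuity of $Sp(L(v))$ I would quote Hryniv--Mykytiuk \cite{HM01}; alternatively, decomposing $L(v)$ as the direct integral $\int_{[0,\pi]}^{\oplus}L_\theta(v)\,d\theta$ via the Floquet transform, the band functions $\theta\mapsto\lambda_n(\theta)$ defined by $\Delta(\lambda_n(\theta))=2\cos\theta$ are real--analytic and, by the claim, \emph{strictly} monotone on $(0,\pi)$ (as $\lambda_n'(\theta)=-2\sin\theta/\Delta'(\lambda_n(\theta))\ne0$ there), so the spectral measure carries no atoms or singular part. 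The main obstacle is thus concentrated in the claim and in reconciling it cleanly with the resolvent formula of Lemma~\ref{lem02}; once monotonicity of $\Delta$ inside the bands is in hand, the bookkeeping behind (\ref{104}) and the band--gap description is routine.
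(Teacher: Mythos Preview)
The paper does not actually prove Proposition~\ref{prop03}; it is stated as a known result, with the singular case attributed to Hryniv--Mykytyuk \cite{HM01} (see the paragraph preceding the boundary conditions ($\text{a}^*$), ($\text{b}^*$)). So there is no ``paper's proof'' to compare against beyond that citation.

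Your proposal, by contrast, is a genuine self--contained argument built out of the paper's own machinery (Theorems~\ref{thm1} and~\ref{thm2}, the Corollary, Lemmas~\ref{lem01} and~\ref{lem02}), and it follows the classical Hill--discriminant route. The core step --- that $\Delta'(\lambda^*)\neq 0$ whenever $|\Delta(\lambda^*)|<2$ --- is argued correctly: for $\theta^*\in(0,\pi)$ the monodromy $M(\lambda^*)$ has distinct eigenvalues, so $M(\lambda^*)-e^{i\theta^*}I$ has rank~$1$, its adjugate is nonzero, and the resolvent formula from Lemma~\ref{lem02} then has a pole whose order equals the order of the zero of $2\cos\theta^*-\Delta(\lambda)$; self--adjointness of $L_{\theta^*}$ (Theorem~\ref{thm2}(b)) forces that order to be~$1$. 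Your identification of this as the delicate point is exactly right, and your verification (range of the adjugate equals $\ker(M-e^{i\theta^*}I)$, hence the leading Laurent coefficient is a nonzero multiple of the Floquet eigenfunction) is sound. The isolated--point argument at $\lambda_0$ via Lemma~\ref{lem01}(a) is also correct, since an isolated spectral point of a self--adjoint operator must be an eigenvalue.

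One point you assert but do not fully justify is that each zero of $\Delta\mp 2$ has order at most~$2$. The phrase ``the problem is of second order'' is suggestive but not a proof: what you need is that the \emph{algebraic} multiplicity of any eigenvalue of $L_{Per^\pm}$ is at most~$2$, together with the same pole--order${}={}$zero--order identification at the band edges. The latter requires separate handling of the case $M(\lambda^*)=\pm I$ (geometric multiplicity~$2$), where the adjugate vanishes identically and your rank--one argument no longer applies directly; one then uses instead that the Riesz projection has rank~$2$ and compares with a contour--integral count of zeros of $\Delta\mp 2$. This is standard but should be noted. With that caveat, your sketch is correct and supplies what the paper leaves to the reference.
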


Let us mention that
 A. Savchuk and A. Shkalikov \cite{SS00}
have studied the Sturm--Liouville operators that arise when the
differential expression $\ell_Q, \; Q \in L^2 ([0,1]), $ is
considered with appropriate regular boundary conditions (see
Theorems 1.5 and 1.6 in \cite{SS03}).

\section{Fourier representation of the operators $L_{Per^\pm} $}

Let $L^0_{bc}$ denote the free operator $L^0 = - d^2/dx^2 $
considered with boundary conditions $bc$ as a self--adjoint
operator in $L^2 ([0,\pi]).$ It is easy to describe the spectra and
eigenfunctions of $L^0_{bc}$ for $bc = Per^\pm, Dir :$

(a) $ Sp (L^0_{Per^+}) = \{n^2, \; n = 0,2,4, \ldots \};$ its
eigenspaces are  $E^0_n = Span \{e^{\pm inx} \} $ for $n>0 $ and
$E^0_0 = \{ const\}, \; \; \dim E^0_n = 2 $ for $n>0, $ and $\dim
E^0_0 = 1. $

(b) $ Sp (L^0_{Per^-}) = \{n^2, \; n = 1,3,5, \ldots \};$ its
eigenspaces are  $E^0_n = Span \{e^{\pm inx} \}, $ and $ \dim
E^0_n = 2. $

(c) $ Sp (L^0_{dir}) = \{n^2, \; n \in \mathbb{N} \};$ each
eigenvalue $n^2 $ is simple; a corresponding normalized
eigenfunction is $\sqrt{2} \sin nx.$

 Depending on the boundary conditions,
we consider as our canonical orthogonal normalized basis (o.n.b.)
in $ L^2 ([0,\pi]) $ the system $u_k (x), \;k \in \Gamma_{bc}, $ where
\begin{eqnarray}
\label{0.012}
 \text{if} \; \; bc =Per^+ & \quad u_k =\exp (ikx),& \;
k \in \Gamma_{Per^+} = 2\mathbb{Z};  \\
\label{0.014} \text{if} \;
\; bc =Per^- &\quad u_k =\exp (ikx), & \; k \in \Gamma_{Per^-} =
1+2\mathbb{Z}; \\ \text{if} \; \;
bc = Dir  & \quad  u_k =\sqrt{2} \sin kx,
&\; k \in \Gamma_{Dir} = \mathbb{N}.
\label{0.015}
\end{eqnarray}
Let us notice that  $\{u_k (x), \;k \in \Gamma_{bc}\} $ is a
complete system of unit eigenvectors of the operator $L^0_{bc}.$

We set
\begin{equation}
\label{006} H^1_{Per^+} = \left \{f \in H^1: \quad f(\pi) = f(0) \right \},
\quad H^1_{Per^-} = \left \{f \in H^1: \quad f(\pi) = - f(0) \right \}
\end{equation}
and
\begin{equation}
\label{007} H^1_{Dir} = \left \{f \in H^1: \quad f(\pi) = f(0) = 0 \right \}.
\end{equation}
One can easily see that $\{e^{ikx}, \; k \in 2\mathbb{Z} \} $ is
an orthogonal basis in $H^1_{Per^+},$ $\{e^{ikx}, \; k \in 1+
2\mathbb{Z} \} $ is an orthogonal basis in $H^1_{Per^-},$ and
$\{\sqrt{2} sin kx, \; k \in \mathbb{N} \} $ is an orthogonal
basis in $H^1_{Dir}.$

From here it follows that
\begin{equation}
\label{009} H^1_{bc} = \left \{ f(x) = \sum_{k\in \Gamma_{bc}} f_k
u_k (x) \;: \quad \|f\|_{H^1} =\sum_{k\in \Gamma_{bc}}
(1+k^2)|f_k|^2 < \infty \right \}.
\end{equation}

The following statement is well known.

\begin{Lemma}
\label{lem002} (a) If $f,g \in L^1 ([0,\pi]) $ and $ f \sim
\sum_{k \in 2\mathbb{Z}} f_k e^{ikx}, \quad g \sim \sum_{k \in
2\mathbb{Z}} g_k e^{ikx} $ are their Fourier series respect to the
system $\{ e^{ikx},k \in 2\mathbb{Z} \}, $ then the following
conditions are equivalent:

(i) $ {} \quad f$ is absolutely continuous, $ f(\pi) = f(0) $ and
$ f^\prime (x) = g(x) $ a.e.;

(ii)   $ {} \quad g_k = ik f_k \quad \forall k \in 2\mathbb{Z}. $\vspace{3mm}

(b) If $f,g \in L^1 ([0,\pi]) $ and $ f \sim \sum_{k \in 1+
2\mathbb{Z}} f_k e^{ikx}, \quad g \sim \sum_{k \in 1+ 2\mathbb{Z}}
g_k e^{ikx} $ are their Fourier series respect to the system $\{
e^{ikx},k \in 1 + 2\mathbb{Z} \}, $ then the following conditions
are equivalent:

$(i^*) \quad f $ is absolutely continuous, $ f(\pi) = - f(0) $ and
$ f^\prime (x) = g(x) $ a.e.;

$ (ii^*)      \quad g_k = ik f_k \quad \forall k \in 1
+2\mathbb{Z}.
$
\end{Lemma}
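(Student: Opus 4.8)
The plan is to establish each equivalence along classical lines: the direction ``function $\Rightarrow$ coefficients'' by integration by parts, and the converse by comparing $f$ with a suitable primitive of $g$ and then invoking the uniqueness theorem for Fourier series of $L^1([0,\pi])$--functions. First, for $(i)\Rightarrow(ii)$ in part (a): if $f$ is absolutely continuous, $f(\pi)=f(0)$ and $f'=g$ a.e., then integration by parts (legitimate since $f$ is AC and $e^{-ikx}$ is smooth) gives
\[
\pi g_k=\int_0^\pi f'(x)e^{-ikx}\,dx=\bigl[f(x)e^{-ikx}\bigr]_0^\pi+ik\int_0^\pi f(x)e^{-ikx}\,dx=\bigl(f(\pi)e^{-ik\pi}-f(0)\bigr)+ik\pi f_k .
\]
For $k\in2\mathbb Z$ one has $e^{-ik\pi}=1$, so the boundary term is $f(\pi)-f(0)=0$, whence $g_k=ikf_k$. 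The proof of $(i^*)\Rightarrow(ii^*)$ in part (b) is the same computation, only now $k$ is odd, $e^{-ik\pi}=-1$, and the boundary term equals $-\bigl(f(\pi)+f(0)\bigr)$, which vanishes by the antiperiodic condition $f(\pi)=-f(0)$.

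For the converse $(ii)\Rightarrow(i)$ in part (a) I would put $F(x)=\int_0^x g(t)\,dt$. Then $F$ is absolutely continuous, $F'=g$ a.e., $F(0)=0$, and since $(ii)$ forces $g_0=0$ we get $F(\pi)=\int_0^\pi g=\pi g_0=0=F(0)$. Applying the already proved implication $(i)\Rightarrow(ii)$ to the pair $(F,g)$ yields $g_k=ikF_k$ for every $k\in2\mathbb Z$, hence $ik(F_k-f_k)=0$, i.e.\ $F_k=f_k$ for all $k\neq0$. Consequently $f-F$ has all Fourier coefficients with respect to $\{e^{ikx}:k\in2\mathbb Z\}$ equal to zero except possibly the zeroth; subtracting the constant $(f-F)_0$ and using the uniqueness theorem for $L^1$ Fourier series, I conclude $f=F+\mathrm{const}$ a.e. Thus $f$ coincides a.e.\ with an absolutely continuous function whose derivative is $g$ a.e.\ and which satisfies $f(\pi)-f(0)=F(\pi)-F(0)=0$, which is $(i)$.

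The converse $(ii^*)\Rightarrow(i^*)$ in part (b) follows the same pattern with two minor adjustments. Since $0$ is no longer an index, to build a primitive satisfying the antiperiodic condition I would take $F(x)=\int_0^x g(t)\,dt-\frac{1}{2}\int_0^\pi g(t)\,dt$, so that $F'=g$ a.e.\ and $F(\pi)=-F(0)$; the already proved implication $(i^*)\Rightarrow(ii^*)$ then gives $F_k=f_k$ for all odd $k$, hence $h:=f-F$ has all odd--indexed Fourier coefficients zero. To pass from here to $h=0$ a.e.\ I would multiply by $e^{-ix}$: the function $e^{-ix}h\in L^1([0,\pi])$ has, with respect to the \emph{even} system $\{e^{imx}:m\in2\mathbb Z\}$, the coefficients $h_{m+1}=0$ for every even $m$, so by the same $L^1$ uniqueness theorem $e^{-ix}h=0$, i.e.\ $h=0$ a.e.\ and $f=F$ a.e.

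I expect the only genuinely delicate point to be the converse implications: one must deduce an a.e.\ identity between the two $L^1$ functions from an identity between their Fourier coefficients, and since $f,g$ need not lie in $L^2$ the mere completeness of the orthonormal system in $L^2([0,\pi])$ does not suffice — the uniqueness theorem for Fourier series of $L^1$ functions (proved, e.g., via Fej\'er means) is what is really needed. Handling the zeroth coefficient in (a) and the index shift $h\mapsto e^{-ix}h$ in (b) are the remaining, purely bookkeeping, matters.
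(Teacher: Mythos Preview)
Your proof is correct and follows essentially the same approach as the paper: integration by parts for $(i)\Rightarrow(ii)$ and $(i^*)\Rightarrow(ii^*)$, and for the converse, comparing $f$ with a primitive of $g$ and invoking the $L^1$ uniqueness theorem for Fourier series. The only cosmetic difference is in part (b): the paper reduces the entire statement to part (a) by the substitution $\tilde f=fe^{ix}$, $\tilde g=ge^{ix}+ife^{ix}$ (which turns the antiperiodic condition into the periodic one and shifts the index set from $1+2\mathbb Z$ to $2\mathbb Z$), whereas you construct the antiperiodic primitive $F$ directly and use the same index--shift trick $h\mapsto e^{-ix}h$ only at the final uniqueness step---both routes are equivalent and equally short.
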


\begin{proof}
An integration by parts gives the implication (i) $\Rightarrow $
(ii)  [or $(i^*) \Rightarrow  (ii^*)$].

To prove that (ii) $\Rightarrow $ (i) we set $ G(x) = \int_0^x
g(t) dt. $ By (ii) for $k=0,$ we have $G(\pi) = \int_0^\pi g(t)
dt = \pi g_0 = 0. $ Therefore, integrating by parts we get $$ g_k
= \frac{1}{\pi} \int_0^\pi g(x) e^{-ikx} dx =\frac{1}{\pi}
\int_0^\pi e^{-ikx} d
G(x) = ik G_k, $$ where $G_k =\frac{1}{\pi} \int_0^\pi e^{-ikx}
G(x) dx $ is the $k$--th Fourier coefficient of $G.$ Thus, by
(ii), we have $G_k = f_k $ for $k \neq 0,$ so by the Uniqueness
Theorem for Fourier series $ f(x) = G(x) + const, $ i.e.(i) holds.

Finally, the proof of the implication $(ii^*) \Rightarrow (i^*) $
could be reduced to part (a) by considering the functions
$\tilde{f} (x) = f(x) e^{ix} \sim \sum_{k \in 1+2\mathbb{Z}}
f_{k-1} e^{ikx}$ and $ \tilde{g} (x) = g(x) e^{ix} + i f(x)
e^{ix}.$ We omit the details.
\end{proof}

The next proposition gives the Fourier representations of the
operators $L_{Per^\pm} $ and their domains.

\begin{Proposition}
\label{prop001} In the above notations,
if $y \in H^1_{Per^\pm}, $ then
we have $y=\sum_{\Gamma_{Per^\pm}} y_k e^{ikx} \in
D(L_{Per^\pm})$  and  $ \ell ( y) =  h = \sum_{\Gamma_{Per^\pm}} h_k  e^{ikx}  \in H^0$
if and only if
\begin{equation}
\label{0011}  h_k = h_k (y) : =  k^2 y_k + \sum_{m\in
\Gamma_{Per\pm}} V(k-m) y_m + C y_k ,  \quad   \sum |h_k|^2 <
\infty,
\end{equation}
i.e.,
\begin{equation}
\label{0012} D(L_{Per^\pm}) = \left \{y \in H^1_{Per^\pm}:  \quad
(h_k (y))_{k \in  \Gamma_{Per^\pm}} \in \ell^2 \left
(\Gamma_{Per^\pm} \right )
\right \}
\end{equation}
and
\begin{equation}
\label{0013} L_{Per^\pm} (y)  = \sum_{k \in \Gamma_{Per^\pm}} h_k
(y) e^{ikx}.
\end{equation}

\end{Proposition}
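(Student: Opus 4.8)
The plan is to translate the differential-operator description of $L_{Per^\pm}$ from Theorem~\ref{thm2} (with the boundary conditions $Per^\pm$ being the cases $\theta = 0$ and $\theta = \pi$ of \eqref{0.70}) into the Fourier side via the orthonormal bases $\{u_k\} = \{e^{ikx}\}$ on $\Gamma_{Per^\pm}$. The key mechanism is the factorization \eqref{001} of $\ell_Q$, namely $\ell_Q(y) = -(y' - Qy)' - Qy' + Cy$, together with Lemma~\ref{lem002}, which identifies ``differentiation inside $H^1_{Per^\pm}$'' with ``multiplication of Fourier coefficients by $ik$.'' The first step is to fix $y \in H^1_{Per^\pm}$, write $y = \sum_{k \in \Gamma_{Per^\pm}} y_k e^{ikx}$ (this series converging in $H^1$ by \eqref{009}), and compute the Fourier coefficients of each term of $\ell_Q(y)$ separately, working at first only at the level of formal Fourier series / distributions, and only afterwards checking membership in $H^0 = L^2([0,\pi])$.

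Concretely, I would proceed as follows. Since $v = C + Q'$ with $Q \in L^2([0,\pi])$, $q(0) = 0$, and $v = \sum_{m \in 2\mathbb{Z}} V(m) e^{imx}$ with $V(m) = im\,q(m)$ (Proposition~\ref{prop01}), the product $v \cdot y$ has, as a distribution, Fourier coefficients $\sum_{m \in \Gamma_{Per^\pm}} V(k-m) y_m$; note $k - m \in 2\mathbb{Z}$ in both cases $\Gamma_{Per^+} = 2\mathbb{Z}$ and $\Gamma_{Per^-} = 1 + 2\mathbb{Z}$, so the coefficients $V(k-m)$ are the relevant ones. The term $-y''$ contributes $k^2 y_k$ (formally), and the term $Cy$ contributes $Cy_k$. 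Summing, the formal Fourier coefficient of $\ell_Q(y) = -y'' + vy$ is exactly $h_k(y) = k^2 y_k + \sum_{m} V(k-m) y_m + C y_k$ as in \eqref{0011}. Then I would invoke the characterization of the domain in \eqref{0.71}--\eqref{0.72} (with $\theta = 0$ or $\pi$): $y \in D(L_{Per^\pm})$ precisely when $y \in H^1$, the boundary conditions $Per^\pm$ hold (which is what $y \in H^1_{Per^\pm}$ encodes for the first component, the second component $y' - Qy$ condition needing separate attention), $y' - Qy \in W^1_1([0,\pi])$, and $\ell_Q(y) \in H^0$. The point is that if the formal coefficients $(h_k(y))$ lie in $\ell^2$, then $\sum h_k(y) e^{ikx}$ defines an $H^0$-function $h$, and one must verify that $\ell_Q(y) = h$ as distributions and that automatically $u = y' - Qy \in W^1_1$ — but this last implication is exactly the ``if and only if'' already recorded in the text right after \eqref{001} (the equivalence of $\ell_Q(y) = f \in L^2$ with the system \eqref{0.31} having a $W^1_{1,loc}$ solution), so it can be quoted rather than reproved. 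Conversely, if $y \in D(L_{Per^\pm})$ then $\ell_Q(y) \in H^0$ has $\ell^2$ Fourier coefficients, and these must coincide with the formal $h_k(y)$ by uniqueness of Fourier coefficients of an $L^1$-distribution — giving $\sum|h_k(y)|^2 < \infty$.

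The main obstacle, and the step deserving the most care, is the justification that the formal Fourier coefficient of $vy$ really is the convolution $\sum_m V(k-m) y_m$ and that this manipulation is legitimate given that $v$ is only a distribution in $H^{-1}$ and $y$ is only in $H^1$ — i.e., that the pairing $\langle v, y \cdot \overline{u_k}\rangle$ makes sense and equals the stated sum. Here I would use $v = Q'$ with $Q \in L^2$: then $\langle vy, \overline{e^{ikx}}\rangle = \langle Q'y, \overline{e^{ikx}}\rangle = -\langle Q, (y e^{-ikx})' \rangle = -\langle Q, y' e^{-ikx}\rangle + ik\langle Q, y e^{-ikx}\rangle$, which are honest $L^2$-pairings since $y, y' \in L^2$ (recall $y \in H^1$), and expanding $Q$, $y$, $y'$ in Fourier series and using $y'$'s coefficients $= ik y_k$ (Lemma~\ref{lem002}, applicable because $y \in H^1_{Per^\pm}$) one recovers $\sum_m im\, q(k-m) y_m \cdot(\text{sign bookkeeping}) = \sum_m V(k-m) y_m$ after combining the two pieces; this is a short but slightly delicate computation with the index shifts and the relation $V(m) = imq(m)$, and it is where one sees why the quasi-derivative splitting of $\ell_Q$ is the natural object. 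Once this identity is in hand, everything else is bookkeeping: assembling $h_k(y)$, invoking the already-proven equivalence after \eqref{001} to get the $W^1_1$ regularity of $y' - Qy$ for free, and reading off \eqref{0012} and \eqref{0013}.
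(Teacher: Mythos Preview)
Your proposal is correct and follows essentially the same strategy as the paper --- compute the Fourier coefficients of each piece of $\ell_Q(y)$ and check the domain conditions --- but the paper's execution differs in one instructive way that closes the gap you flag.

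The paper does not pair $vy$ against $e^{-ikx}$ distributionally. Instead it keeps the quasi-derivative $z = y' - Qy$ as the central intermediate object: writing $\ell_Q(y) = -z' - Qy' + Cy$, it computes the Fourier coefficients $z_k = iky_k - \sum_m q(k-m)y_m$ of $z$ (an honest $L^1$ function), then substitutes into the coefficient identity coming from $-z' - Qy' + Cy = h$ to arrive at \eqref{0011} after the cancellation $ik\,q(k-m) - im\,q(k-m) = i(k-m)q(k-m) = V(k-m)$. This is exactly the algebra your integration-by-parts computation of $\langle Q'y, e^{-ikx}\rangle$ would produce, so the forward direction is the same.

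The real difference is in the converse. You propose to quote the equivalence recorded after \eqref{001} to get $u = y' - Qy \in W^1_1$, and you note that the boundary condition $(y'-Qy)(\pi) = \pm(y'-Qy)(0)$ still ``needs separate attention.'' That equivalence indeed gives only local $W^1_1$ regularity, not the periodicity of $u$. The paper avoids this issue by applying Lemma~\ref{lem002} to $z$ itself: once one checks that $(z_k)$ are the Fourier coefficients of the $L^1$ function $z = y'-Qy$ and that $(ikz_k)$ are the Fourier coefficients of the $L^1$ function $-h - Qy' + Cy$, the implication (ii)$\Rightarrow$(i) of Lemma~\ref{lem002} delivers \emph{simultaneously} that $z$ is absolutely continuous, that $z(\pi) = \pm z(0)$, and that $z' = -h - Qy' + Cy$. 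So the step you left open is handled not by the system equivalence after \eqref{001} but by a second application of Lemma~\ref{lem002}, this time to the quasi-derivative.
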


\begin{proof}
Since the proof is the same in the  periodic and  anti--periodic cases, we
consider only the case of periodic boundary conditions.
By (\ref{002}),  if $ y \in
D(L_{Per^+}), $ then $ y \in H^1_{Per^+} $ and
\begin{equation}
\label{0014} \ell (y) = - z^\prime - Q y^\prime + C y = h \in L^2
([0, \pi]),
\end{equation}
where
\begin{equation}
\label{0015}  z := y^\prime - Q y \in W^1_1 ([0,\pi]), \quad
z(\pi) = z(0).
\end{equation}
Let $$y(x) = \sum_{k \in 2\mathbb{Z}} y_k e^{ikx}, \quad z(x) =
\sum_{k \in 2\mathbb{Z}} z_k e^{ikx}, \quad h(x) = \sum_{k \in
2\mathbb{Z}} h_k e^{ikx} \ $$ be the Fourier series of $y, z $ and
$h.$ Since $z(\pi) = z(0), $  Lemma~\ref{lem002} says that the
Fourier series of $z^\prime $ may be obtained by differentiating
term by term the Fourier series of $z,$ and the same property is
shared by $ y $ as a function in $H^1_{Per^+}.$ Thus, (\ref{0014})
implies
\begin{equation}
\label{0016}  -ik z_k - \sum_m q(k-m) im y_m  + C y_k = h_k.
\end{equation}

On the other hand, by (\ref{0015}), we have $ z_k = ik y_k -
\sum_m q(k-m) y_m, $ so substituting that in (\ref{0016}) we get
\begin{equation}
\label{0017} -ik \left [ik y_k - \sum_m q(k-m) y_m \right ] -
\sum_m q(k-m) im y_m  + C y_k = h_k,
\end{equation}
which leads to (\ref{0011})
because $ V(m) = im q(m), \; m \in 2\mathbb{Z}.$

Conversely, if (\ref{0011}) holds, then we have (\ref{0017}).
Therefore,  (\ref{0016}) holds with $ z_k = iky_k - \sum_m q(k-m)
y_m. $

Since  $ y = \sum y_k e^{ikx} \in H^1_{Per^+}, $ the Fourier
coefficients of its derivative are $iky_k, \; k \in 2 \mathbb{Z}
.$ Thus, $(z_k) $ is the sequence of Fourier coefficients of
the function  $z= y^\prime - Qy \in L^1 ([0,\pi]).$

On the other hand, by (\ref{0016}), $(ikz_k)$ is the sequence of
Fourier coefficients of an $L^1 ([0,\pi])$--function.
Therefore, by Lemma  \ref{lem002}, the function $z$ is absolutely
continuous, $z(\pi) = z(0), $ and $(ikz_k) $ is the sequence of
Fourier coefficients of its derivative $z^\prime. $ Thus,
(\ref{0014}) and (\ref{0015}) hold, i.e.,
$y \in D(L_{Per^+}) $ and $ L_{Per^+} y = \ell (y) = h. $
\end{proof}

Now, we are ready to explain the Fourier method for studying the
spectra of the operators $L_{Per^\pm}.$ Let $$ \mathcal{F} : H^0
\to \ell^2 (\Gamma_{Per^\pm}) $$ be the Fourier isomorphisms
defined by corresponding to each function $f \in H^0 $ the
sequence $(f_k) $ of its Fourier coefficients $ f_k  = (f,u_k),$
where $\{u_k, \; k \in \Gamma_{Per^\pm}\} $  is, respectively, the
basis (\ref{0.012}) or (\ref{0.014}). Let $\mathcal{F}^{-1} $ be
the inverse Fourier isomorphism.

Consider the unbounded operators $\mathcal{L}_+ $  and
$\mathcal{L}_- $ acting, respectively, in $\ell^2
(\Gamma_{Per^\pm}) $ as
\begin{equation}
\label{0031} \mathcal{L}_\pm (z) = \left ( h_k (z)  \right )_{k
\in \Gamma_{Per^\pm}}, \quad h_k (z) = k^2 z_k + \sum_{m \in
\Gamma_{Per^\pm}} V(k-m) z_m +C z_k,
\end{equation}
respectively, in the domains
\begin{equation}
\label{0032} D(\mathcal{L}_\pm ) = \left \{ z \in \ell^2 (|k|,
\Gamma_{Per^\pm}): \; \;\mathcal{L}_\pm (z) \in \ell^2
(\Gamma_{Per^\pm})  \right \},
\end{equation}
where $\ell^2 (|k|, \Gamma_{Per^\pm})$ is the weighted
$\ell^2$--space $$ \ell^2 (|k|, \Gamma_{Per^\pm})= \left
\{z=(z_k)_{k \in \Gamma_{Per^\pm}}: \;\; \sum_{k} (1+|k|^2)
|z_k|^2 < \infty \right \}.$$

 In view of (\ref{009}) and Proposition \ref{prop001}, the
following theorem holds.
\begin{Theorem}
\label{thm001} In the above notations, we have
\begin{equation}
\label{0033} D(L_{Per^\pm})=\mathcal{F}^{-1} \left (
D(\mathcal{L}_\pm ) \right )
\end{equation}
and
\begin{equation}
\label{0034} L_{Per^\pm} = \mathcal{F}^{-1} \circ
 \mathcal{L}_\pm \circ \mathcal{F}.
\end{equation}
\end{Theorem}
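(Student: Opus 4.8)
The statement is essentially a restatement of Proposition \ref{prop001} in the language of the Fourier isomorphism, so the plan is to unwind the definitions and check that the two descriptions coincide verbatim. First I would recall that $\mathcal{F}: H^0 \to \ell^2(\Gamma_{Per^\pm})$ sends a function $f = \sum_{k \in \Gamma_{Per^\pm}} f_k u_k$ to the sequence $(f_k)$ of its Fourier coefficients, and that $\mathcal{F}$ is a unitary isomorphism carrying $H^1_{Per^\pm}$ onto the weighted space $\ell^2(|k|,\Gamma_{Per^\pm})$; this last fact is exactly the content of formula (\ref{009}). Thus $\mathcal{F}^{-1}$ maps $\ell^2(|k|,\Gamma_{Per^\pm})$ isometrically onto $H^1_{Per^\pm}$.

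Next I would verify (\ref{0033}). Take $z = (z_k) \in D(\mathcal{L}_\pm)$ and set $y = \mathcal{F}^{-1}(z) = \sum z_k u_k$. By the definition (\ref{0032}), $z \in \ell^2(|k|,\Gamma_{Per^\pm})$, so $y \in H^1_{Per^\pm}$ by (\ref{009}); and $\mathcal{L}_\pm(z) \in \ell^2(\Gamma_{Per^\pm})$ means precisely that the sequence $(h_k(z))$ defined in (\ref{0031}) is square-summable. But $(h_k(z))$ is, coefficient by coefficient, the same as the sequence $(h_k(y))$ in (\ref{0011}) — the formulas are identical once one identifies $z_k$ with the Fourier coefficients $y_k$. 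Hence, by Proposition \ref{prop001} (specifically the characterization (\ref{0012})), $y \in D(L_{Per^\pm})$. Conversely, if $y \in D(L_{Per^\pm})$, then $y \in H^1_{Per^\pm}$ gives $\mathcal{F}(y) \in \ell^2(|k|,\Gamma_{Per^\pm})$, and the condition $(h_k(y)) \in \ell^2$ from (\ref{0012}) gives $\mathcal{L}_\pm(\mathcal{F}(y)) \in \ell^2(\Gamma_{Per^\pm})$, so $\mathcal{F}(y) \in D(\mathcal{L}_\pm)$. This proves (\ref{0033}).

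Finally, for (\ref{0034}), on the common domain one computes $\mathcal{F} \circ L_{Per^\pm} \circ \mathcal{F}^{-1}(z) = \mathcal{F}\bigl(L_{Per^\pm} y\bigr)$ where $y = \mathcal{F}^{-1}(z)$; by (\ref{0013}) we have $L_{Per^\pm} y = \sum_k h_k(y) e^{ikx}$, so its Fourier coefficient sequence is $(h_k(y)) = (h_k(z)) = \mathcal{L}_\pm(z)$, which is exactly the claimed identity. There is no real obstacle here: the entire content — the differentiation-term-by-term justification, the passage from the differential equation to the infinite linear system, and the $\ell^2$ bookkeeping — has already been carried out in Lemma \ref{lem002} and Proposition \ref{prop001}. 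The only thing to be careful about is making explicit that the weighted-$\ell^2$ membership encoded in $D(\mathcal{L}_\pm)$ matches the $H^1_{Per^\pm}$ requirement in $D(L_{Per^\pm})$, i.e. invoking (\ref{009}) at the right moment; everything else is a transcription of Proposition \ref{prop001} through $\mathcal{F}$.
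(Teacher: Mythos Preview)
Your proposal is correct and follows exactly the paper's approach: the paper simply states that the theorem holds ``in view of (\ref{009}) and Proposition \ref{prop001}'' without writing out any details, and what you have done is spell out that one-line justification carefully. Your explicit check that $\ell^2(|k|,\Gamma_{Per^\pm})$ corresponds to $H^1_{Per^\pm}$ via (\ref{009}), and that the formulas (\ref{0011}) and (\ref{0031}) coincide under $\mathcal F$, is precisely the unwinding the paper leaves to the reader.
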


If it does not lead to confusion, for convenience we will loosely
use one and the same notation $L_{Per^\pm}$  for the operators
$L_{Per^\pm}$ and $\mathcal{L}_\pm. $

\section{Fourier representation for the
Hill--Schr\"odinger
operator with Dirichlet boundary conditions}

In this section we study the Hill--Schr\"odinger operator $L_{Dir}
(v), \; v=C + Q^\prime, $ generated by the differential expression
$\ell_Q (y) = -(y^\prime - Qy)^\prime -Qy^\prime $ considered on
the interval $[0,\pi]$ with Dirichlet boundary conditions $$ Dir:
\quad  y(0) = y(\pi) = 0.$$ Its domain is
\begin{equation}
\label{2.0} D(L_{Dir} (v)) = \left \{ y \in H^1 : \quad y^\prime -
Qy \in W^1_1 ([0,\pi]), \;\; y(0) = y(\pi) = 0, \;\; \ell_Q (y)
\in H^0 \right \},
\end{equation}
and we set
\begin{equation}
\label{2.21} L_{Dir} (v) y= \ell_Q (y).
\end{equation}

\begin{Lemma}
\label{lem2.02} Let $ \begin{pmatrix} y_1 \\ u_1     \end{pmatrix}
$ and $ \begin{pmatrix} y_2 \\ u_2     \end{pmatrix} $ be the
solutions of the homogeneous system (\ref{0.32}) which satisfy
\begin{equation}
\label{2.70}
\begin{pmatrix} y_1 (0) \\ u_1 (0)     \end{pmatrix}
=  \begin{pmatrix} 1 \\ 0     \end{pmatrix}, \qquad
\begin{pmatrix} y_2 (0) \\ u_2 (0)     \end{pmatrix}
=  \begin{pmatrix} 0 \\ 1     \end{pmatrix}.
\end{equation}
 If
\begin{equation}
\label{2.73} y_2 (\pi) \neq 0,
\end{equation}
then the non--homogeneous system (\ref{0.31}) has, for each $f \in
H^0,$ a unique solution $(y,u) = (R_1 (f), R_2 (f))$ such that
\begin{equation}
\label{2.74} y(0) = 0, \quad y (\pi) = 0.
\end{equation}
Moreover, $R_1  $ is a linear continuous operator from $H^0$ into
$H^1, $ and $R_2 $ is a linear continuous operator in $H^0 $ with
a range in $W^1_1 ([0,\pi]).$
\end{Lemma}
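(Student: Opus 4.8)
The plan is to adapt, almost verbatim, the variation-of-parameters argument from the proof of Lemma~\ref{lem02}, with the periodic/antiperiodic conditions (\ref{0.74}) replaced by the Dirichlet conditions (\ref{2.74}); the Dirichlet case turns out to be slightly simpler, since one of the two integration constants is forced to vanish. First I would write every solution of the non-homogeneous system (\ref{0.31}) in the form
\[
\begin{pmatrix} y(x) \\ u(x) \end{pmatrix} = v_1(x)\begin{pmatrix} y_1(x) \\ u_1(x) \end{pmatrix} + v_2(x)\begin{pmatrix} y_2(x) \\ u_2(x) \end{pmatrix},
\]
\[
v_1(x) = -\int_0^x y_2(t) f(t)\,dt + C_1, \qquad v_2(x) = \int_0^x y_1(t) f(t)\,dt + C_2,
\]
using that the Wronskian of $(y_1,u_1)$ and $(y_2,u_2)$ is identically $1$. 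Since $y_1(0)=1$ and $y_2(0)=0$, evaluating at the endpoints gives $y(0)=C_1$ and $y(\pi)= (m_1(f)+C_1)\,y_1(\pi) + (m_2(f)+C_2)\,y_2(\pi)$ with $m_1,m_2$ as in (\ref{0.79}). Hence (\ref{2.74}) holds precisely when $C_1=0$ and $C_2 = -m_2(f) - m_1(f)\,y_1(\pi)/y_2(\pi)$; this is the only place the hypothesis (\ref{2.73}) is used, and it shows that $C_1=C_1(f)$ and $C_2=C_2(f)$ are uniquely determined linear combinations of $m_1(f)$ and $m_2(f)$. With these choices I set $R_1(f)=v_1 y_1 + v_2 y_2$ and $R_2(f)=v_1 u_1 + v_2 u_2$.

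The continuity and regularity statements would then be obtained exactly as in Lemma~\ref{lem02}. Because $y_1,y_2$ are continuous (hence bounded) on $[0,\pi]$, the Cauchy--Schwarz inequality bounds $|v_1(x)|$ and $|v_2(x)|$ by a constant times $\|f\|$, uniformly in $x$, so $R_1$ and $R_2$ are bounded operators on $H^0$. Differentiating $R_1(f)$ and using the variation-of-parameters identity $v_1' y_1 + v_2' y_2 = 0$ gives $(R_1 f)' = v_1 y_1' + v_2 y_2'$; since the first equation of (\ref{0.32}) together with $Q\in L^2([0,\pi])$ yields $y_j' = Qy_j + u_j \in L^2([0,\pi])$, we conclude that $R_1$ maps $H^0$ continuously into $H^1$. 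Likewise the identity $v_1' u_1 + v_2' u_2 = f$ gives $(R_2 f)' = v_1 u_1' + v_2 u_2' + f$, and from the second equation of (\ref{0.32}) with $Q^2\in L^1([0,\pi])$ we get $u_j' = (C-Q^2)y_j - Q u_j \in L^1([0,\pi])$, so $R_2 f \in W^1_1([0,\pi])$.

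For uniqueness, the difference of two solutions of (\ref{0.31}) satisfying (\ref{2.74}) solves the homogeneous system (\ref{0.32}) with $y(0)=0$, hence equals $c\,(y_2,u_2)$ for some scalar $c$; then $y(\pi)=c\,y_2(\pi)=0$ forces $c=0$ by (\ref{2.73}). I do not expect a genuine obstacle here — the whole argument is a routine transcription of Lemma~\ref{lem02} — but the point deserving attention is the regularity bookkeeping in the previous paragraph: one must track that $Q\in L^2$ only gives $(R_2 f)' = v_1 u_1' + v_2 u_2' + f \in L^1([0,\pi])$ and not $L^2$, which is exactly why the conclusion for $R_2$ must be stated in terms of $W^1_1([0,\pi])$ rather than $H^1$.
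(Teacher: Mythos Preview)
Your proposal is correct and follows essentially the same approach as the paper: variation of parameters, solving for $C_1=0$ and $C_2=-m_2(f)-m_1(f)\,y_1(\pi)/y_2(\pi)$, and then invoking the continuity/regularity estimates from Lemma~\ref{lem02}. In fact you supply more detail on the regularity of $R_1$ and $R_2$ and on uniqueness than the paper does, which simply writes the formula for $C_2$ and then says ``It is easy to see (compare with the proof of Lemma~\ref{lem02}) \ldots\ We omit the details.''
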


\begin{proof}
By the variation of parameters method, every solution of the
non--homogeneous system (\ref{0.31}) has the form $$
\begin{pmatrix} y (x) \\ u (x)     \end{pmatrix}
= v_1 (x)
\begin{pmatrix} y_1 (x) \\ u_1 (x)     \end{pmatrix}
+ v_2 (x) \begin{pmatrix} y_2 (x) \\ u_2 (x)     \end{pmatrix}, $$
where
\begin{equation}
\label{2.76} v_1 (x) = -\int_0^x  y_2 (x) f(t)dt +  C_1,     \quad
v_2 (x) = \int_0^x  y_1 (x) f(t) dt +  C_2.
\end{equation}
By (\ref{2.70}), the condition $y(0) = 0 $ will be satisfied if
and only if $C_1 = 0.$ If so, the second condition $ y(\pi) =0 $
in (\ref{2.74}) is equivalent to $$ m_1 (f) y_1 (\pi) + (m_2 (f) +
C_2 ) y_2 (\pi) = 0, $$ where $$ m_1 (f) =-\int_0^\pi  y_2 (x)
f(t)dt, \quad m_2 (f) = \int_0^\pi  y_1 (x) f(t) dt.$$ Thus, if
$y_2 (\pi) \neq 0, $ then we have unique solution $ (y,u)$ of
(\ref{0.31}) that satisfies (\ref{2.74}), and it is given by
(\ref{2.76}) with $C_1 =0 $ and
\begin{equation}
\label{2.78} C_2 (f) = - \frac{y_1 (\pi)}{y_2 (\pi)} m_1 (f) - m_2
(f).
\end{equation}
Thus, we have $\begin{pmatrix} y (x) \\ u (x)     \end{pmatrix}
=\begin{pmatrix} R_1 (f) \\ R_2 (f)     \end{pmatrix}, $ where $$
R_1 (f) = \left ( -\int_0^x  y_2 (x) f(t)dt \right ) \cdot y_1 (x)
+ \left ( \int_0^x  y_1 (x) f(t)dt  + C_2 (f) \right ) \cdot y_2
(x) $$ and $$R_2 (f) = \left ( -\int_0^x  y_2 (x) f(t)dt \right )
\cdot u_1 (x) + \left ( \int_0^x  y_1 (x) f(t)dt  + C_2 (f) \right
) \cdot u_2 (x). $$  It is easy to see (compare with the proof of
Lemma~\ref{lem02}) that $ R_1 $  is a linear continuous operator
from $H^0$ into $H^1, $ and $R_2 $ is a linear continuous operator
in $H^0 $ with a range in $W^1_1 ([0,\pi]).$ We omit the details.
\end{proof}

Now, let us consider the systems (\ref{0.31}) and (\ref{0.32})
with a spectral parameter $\lambda $ by setting $C= -\lambda $
there, and let
 $ \begin{pmatrix} y_1 (x, \lambda) \\ u_1 (x,\lambda)     \end{pmatrix} $
and $ \begin{pmatrix} y_2 (x, \lambda) \\ u_2 (x, \lambda)
\end{pmatrix} $ be the solutions of the homogeneous system
(\ref{0.32}) that satisfy (\ref{2.70}) for $x=0. $  Notice that
\begin{equation}
\label{2.79} y_2 (\overline{v}; x, \overline{\lambda} ) =
\overline{y_2 (v;x, \lambda)}.
\end{equation}

\begin{Theorem}
\label{thm2.1} Suppose $ v\in H^{-1}_{loc} (\mathbb{R})$
is $\pi$--periodic.
Then,

(a) the domain $D(L_{Dir} (v)) \in (\ref{2.0}) $ is dense in $H^0;
$

(b) the operator $L_{Dir} (v)$ is closed, and its conjugate
operator is
\begin{equation}
\label{2.24} \left (L_{Dir} (v) \right )^*  = L_{Dir}
(\overline{v}).
\end{equation}
In particular,  if $v$ is real--valued, then the operator $L_{Dir}
(v)$ is self--adjoint.

(c) the spectrum $Sp (L_{Dir} (v))$ of
the operator $L_{Dir} (v) $ is
discrete, and moreover,
\begin{equation}
\label{2.25} Sp (L_{Dir} (v)) = \{\lambda \in \mathbb{C} \; : \;\;
y_2 (\pi, \lambda) = 0 \}.
\end{equation}
\end{Theorem}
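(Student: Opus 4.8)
The plan is to mirror the structure of the proof of Theorem~\ref{thm2}, replacing the role of Lemma~\ref{lem02} by Lemma~\ref{lem2.02}. First I would establish that the operators $L_{Dir}(v)$ and $L_{Dir}(\overline v)$ are formally adjoint, i.e.
\begin{equation}
\label{2.107}
\left (L_{Dir}(v)y,h\right ) = \left (y, L_{Dir}(\overline v)h\right ),
\qquad y \in D(L_{Dir}(v)),\ h \in D(L_{Dir}(\overline v)).
\end{equation}
This is the same integration-by-parts computation as in the proof of Theorem~\ref{thm2}: writing $\ell_Q(y) = -(y'-Qy)'-Qy'$, integrating over $[0,\pi]$, and moving one derivative onto $\overline h$, the boundary term is $-\frac1\pi\,(y'-Qy)\overline h\,\big|_0^\pi$, which vanishes here because $h(0)=h(\pi)=0$ (rather than because of a quasi-periodic cancellation). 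One is left with the symmetric expression $\frac1\pi\int_0^\pi\bigl(y'\overline{h'} - Qy\overline{h'} - Qy'\overline h\bigr)\,dx$, and running the same manipulation backwards identifies it with $(y, L_{Dir}(\overline v)h)$.

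Next I would invoke Lemma~\ref{lem001} with $A = L_{Dir}(v)$ and $B = L_{Dir}(\overline v)$. To apply it I need a single $\lambda \in \mathbb C$ for which both $A-\lambda$ and $B-\overline\lambda$ are surjective. By Lemma~\ref{lem2.02} (applied with $C$ replaced by $-\lambda$ in the systems (\ref{0.31})--(\ref{0.32})), the operator $L_{Dir}(v)-\lambda$ maps $D(L_{Dir}(v))$ bijectively onto $H^0$ precisely when $y_2(v;\pi,\lambda)\neq 0$; and by (\ref{2.79}), $y_2(\overline v;\pi,\overline\lambda) = \overline{y_2(v;\pi,\lambda)}$, so the corresponding condition for $B$ is the same $y_2(v;\pi,\lambda)\neq 0$. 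Since $y_2(v;\pi,\lambda)$ is an entire function of $\lambda$ that is not identically zero — one sees this from the asymptotics $y_2(\pi,\lambda) = \frac1z[\sin\pi z + o(1)]$, $z^2=\lambda$, in the Remark before Theorem~\ref{thm1}, which forces $y_2(\pi,\lambda)\to 0$ but with infinitely many sign changes — such a $\lambda$ exists. Hence Lemma~\ref{lem001} yields that $D(L_{Dir}(v))$ is dense in $H^0$ and $(L_{Dir}(v))^* = L_{Dir}(\overline v)$, with the self-adjoint case following by taking $v$ real. The closedness of $L_{Dir}(v)$ is automatic, being an adjoint.

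For part (c), I would argue as follows. If $y_2(\pi,\lambda)\neq 0$, then by Lemma~\ref{lem2.02} the operator $L_{Dir}(v)-\lambda$ is boundedly invertible, so $\lambda\notin Sp(L_{Dir}(v))$; this gives the inclusion $Sp(L_{Dir}(v)) \subseteq \{\lambda : y_2(\pi,\lambda)=0\}$. Conversely, if $y_2(\pi,\lambda)=0$, then the solution $(y_2(\cdot,\lambda), u_2(\cdot,\lambda))$ of the homogeneous system (\ref{0.32}) with $C=-\lambda$ satisfies $y_2(0,\lambda)=0$ and $y_2(\pi,\lambda)=0$; moreover $y_2(\cdot,\lambda)\not\equiv 0$ since $u_2(0,\lambda)=1$ (if it vanished identically, then $y_2'= Qy_2 + u_2$ would force $u_2\equiv 0$). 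Therefore $y_2(\cdot,\lambda)\in D(L_{Dir}(v))$ and $L_{Dir}(v)y_2(\cdot,\lambda) = \lambda\, y_2(\cdot,\lambda)$, so $\lambda$ is an eigenvalue. This proves the reverse inclusion, hence (\ref{2.25}). Finally, since $y_2(\pi,\cdot)$ is a non-constant entire function, its zero set is discrete, so the spectrum is discrete. I expect the only mildly delicate point to be the clean verification that $y_2(\pi,\lambda)\not\equiv 0$ and that $y_2(\cdot,\lambda)$ genuinely lies in the domain (in particular that $u_2 = y_2' - Qy_2 \in W^1_1([0,\pi])$ and $\ell_Q(y_2) = \lambda y_2 \in H^0$), but both follow immediately from the fact that $(y_2,u_2)$ solves the first-order system with $L^1_{loc}$ coefficients; everything else is a routine transcription of the proof of Theorem~\ref{thm2}.
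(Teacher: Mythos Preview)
Your proposal is correct and follows essentially the same route as the paper's own proof: formal adjointness via integration by parts (boundary term killed by $h(0)=h(\pi)=0$), then Lemma~\ref{lem001} combined with Lemma~\ref{lem2.02} and the conjugation identity~(\ref{2.79}) to get (a) and (b), and finally the eigenfunction $y_2(\cdot,\lambda)$ together with Lemma~\ref{lem2.02} and the entirety of $y_2(\pi,\cdot)$ for (c). Your write-up is in fact slightly more careful than the paper's in justifying $y_2(\cdot,\lambda)\not\equiv 0$ and the domain membership, and your pointer to the asymptotics (Remark before Theorem~\ref{thm1}) is the correct one.
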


\begin{proof}
Firstly, we show that the operators $L_{Dir} (v) $ and $L_{Dir}
(\overline{v}) $ are formally adjoint, i.e.,
\begin{equation}
\label{2.27} \left (L_{Dir}(v)y, h \right ) = \left (f,
L_{Dir}(\overline{v})
 h \right ) \quad \text{if} \;\; y \in
D(L_{Dir}(v)), \;\; h \in D(L_{Dir}(\overline{v})).
\end{equation}

Indeed, in view of (\ref{2.0}),
 we have  $$ \left (L_{Dir}(v) y, h
\right ) =\frac{1}{\pi} \int_0^\pi  \ell_Q (y) \overline{h} dx
=\frac{1}{\pi} \int_0^\pi\left (-(y^\prime -Qy)^\prime
\overline{h} - Qy^\prime \overline{h}\right ) dx $$ $$ =  -
\frac{1}{\pi} (y^\prime -Qy)\overline{h}\text{\huge
$\vert$}_{0}^{\pi} + \frac{1}{\pi} \int_{0}^{\pi} (y^\prime -Qy)
\overline{h^\prime} dx - \int_{0}^{\pi} Qy^\prime \overline{h} dx
$$ $$ = 0 + \frac{1}{\pi} \int_0^\pi  \left ( y^\prime
\overline{h^\prime} - Q y \overline{h^\prime} - Q y^\prime
\overline{h} \right ) dx. $$

The same argument shows that $$ \frac{1}{\pi} \int_0^\pi \left (
y^\prime \overline{h^\prime} - Q y \overline{h^\prime} - Q
y^\prime \overline{h} \right ) dx = \left (y, L_{Dir}
(\overline{v}) h \right ), $$ which completes the proof of
(\ref{2.27}).

Now we apply Lemma \ref{lem001} with $A= L_{Dir} (v)$ and
$B=L_{Dir} (\overline{v}).$  Choose $\lambda \in \mathbb{C} $ so
that $ y_2 (v;\pi, \lambda ) \neq 0 $ (in view of (\ref{00.45}),
see the remark before Theorem~\ref{thm001}, $ y_2 (v; \pi, \lambda
) $ is a non--constant entire function, so such a choice is
possible). Then, in view of (\ref{2.79}), we have $ y_2
(\overline{v}; \pi, \overline{\lambda}) \neq 0 $ also. By
Lemma~\ref{lem2.02},
 $ L_{Dir}(v) - \lambda $ maps bijectively $D(L_{Dir} (v)) $ onto
$H^0$ and $L_{Dir}(\overline{v}) - \overline{\lambda} $ maps
bijectively $D(L_{Dir}(\overline{v})) $ onto $H^0. $ Thus, by
Lemma~\ref{lem001}, $D(L_{Dir} (v))$ is dense in $H^0$ and $\left
( L_{Dir}(v)\right )^* = L_{Dir} (\overline{v}), $ i.e., (a) and
(b) hold. If $ y_2 (v; \pi, \lambda ) = 0, $ then $\lambda $  is
an eigenvalue of the operator $L_{Dir} (v),$ and $y_2 (v;x,
\lambda ) $ is a corresponding eigenvector.
 In view of Lemma~\ref{lem2.02},
this means that that (\ref{2.25}) holds. Since $ y_2 (\pi, \lambda
)$ is a non--constant entire function, the set on the right in
(\ref{2.25}) is discrete. This completes the proof of (c).
\end{proof}

\begin{Lemma}
\label{lem2.1} (a) If $f,g  \in L^1 ([0,\pi])$ and $ f \sim
\sum_{k=1}^\infty f_k \sqrt{2} \sin kx, $  $g \sim g_0 +
\sum_{k=1}^\infty g_k \sqrt{2} \cos kx $ are, respectively,  their
sine and cosine Fourier series, then the following conditions are
equivalent:\\ (i)   $ \quad f$ is absolutely continuous, $ f(0) =
f(\pi) = 0$ and $ g(x) = f^\prime (x) \;$ a.e.; \\ (ii)  $\quad
 g_0 = 0, \;\quad  g_k = k f_k   \quad \forall k\in \mathbb{N}. $

(b) If $f,g  \in L^1 ([0,\pi])$ and $ f \sim f_0 +
\sum_{k=1}^\infty f_k \sqrt{2} \cos kx $ and $g \sim
\sum_{k=1}^\infty g_k \sqrt{2} \sin kx $ are, respectively,  their
cosine  and sine Fourier series, then the following conditions are
equivalent:\\ $(i^*)   \quad f $ is absolutely continuous and $ g(x)
= f^\prime (x) \;$ a.e.;\\ $  (ii^*)   \quad g_k = - k f_k \quad
k\in \mathbb{N}. $
\end{Lemma}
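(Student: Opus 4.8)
The plan is to prove Lemma~\ref{lem2.1} by direct integration by parts for the easy implications and by an antiderivative/Uniqueness-Theorem argument for the converse, exactly parallel to the proof of Lemma~\ref{lem002}. I will treat part (a) first. For $(i)\Rightarrow(ii)$: assuming $f$ is absolutely continuous with $f(0)=f(\pi)=0$ and $f'=g$ a.e., I compute the cosine Fourier coefficients of $g$. For $k\ge 1$, $g_k=\frac{\sqrt 2}{\pi}\int_0^\pi f'(x)\cos kx\,dx$; integrating by parts, the boundary term $\frac{\sqrt 2}{\pi}[f(x)\cos kx]_0^\pi$ vanishes precisely because $f(0)=f(\pi)=0$, leaving $g_k=\frac{\sqrt2}{\pi}\int_0^\pi f(x)\,k\sin kx\,dx=k f_k$. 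For $k=0$, $g_0=\frac1\pi\int_0^\pi f'(x)\,dx=\frac1\pi(f(\pi)-f(0))=0$. This gives (ii).

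For the converse $(ii)\Rightarrow(i)$ in part (a): given $g\in L^1$ with $g_0=0$ and $g_k=kf_k$, set $G(x)=\int_0^x g(t)\,dt$. Since $g_0=0$ we get $G(\pi)=\int_0^\pi g=\pi g_0=0$, so $G(0)=G(\pi)=0$ and $G$ is absolutely continuous. I then expand $G$ in its sine series $G\sim\sum G_k\sqrt2\sin kx$ and, integrating by parts as above (legitimate since $G(0)=G(\pi)=0$), find that the cosine coefficients of $G'=g$ are $kG_k$ for $k\ge1$ and $0$ for $k=0$; comparing with the hypothesis $g_k=kf_k$ yields $G_k=f_k$ for all $k\ge1$. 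By the Uniqueness Theorem for Fourier series, $f=G$ a.e., so $f$ is (equal a.e.\ to) an absolutely continuous function vanishing at $0$ and $\pi$ with $f'=g$, i.e.\ (i) holds.

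For part (b) I would reduce to, or simply mirror, the computation in (a). The implication $(i^*)\Rightarrow(ii^*)$: if $f$ is absolutely continuous with cosine series $f\sim f_0+\sum f_k\sqrt2\cos kx$ and $f'=g$ a.e., then for $k\ge1$, $g_k=\frac{\sqrt2}{\pi}\int_0^\pi f'(x)\sin kx\,dx=\frac{\sqrt2}{\pi}[f(x)\sin kx]_0^\pi-\frac{\sqrt2}{\pi}\int_0^\pi f(x)\,k\cos kx\,dx=-kf_k$, the boundary term vanishing automatically since $\sin 0=\sin k\pi=0$ (no boundary condition on $f$ is needed, which is why $(i^*)$ has none). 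For $(ii^*)\Rightarrow(i^*)$: set $G(x)=f_0+\int_0^x g(t)\,dt$, which is absolutely continuous with $G'=g$; computing its sine coefficients by parts (again no boundary term, as $\sin$ vanishes at the endpoints) gives sine coefficients $-kG_k$ where $G_k$ are the cosine coefficients of $G$, so $-kG_k=g_k=-kf_k$ forces $G_k=f_k$ for $k\ge1$; matching the constant term is built into the definition of $G$, and the Uniqueness Theorem gives $f=G$ a.e., hence $(i^*)$. Alternatively $(b)$ follows from $(a)$ applied to an antiderivative, but the direct argument is cleaner.

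I expect no serious obstacle: the only points requiring care are (i) recording that the boundary terms in the integration by parts vanish for the \emph{right} reason in each of the four sine/cosine pairings — they vanish identically for the sine expansions of a function but require $f(0)=f(\pi)=0$ for the cosine-of-derivative case — and (ii) the standard justification that a function whose distributional/indefinite-integral derivative is an $L^1$ function with prescribed Fourier data coincides a.e.\ with its antiderivative, which is exactly the device used in Lemma~\ref{lem002}. So the write-up will be short, essentially ``integrate by parts; for the converse pass to $G(x)=\int_0^x g$ and invoke uniqueness of Fourier series,'' with the endpoint conditions matched to the parity of the system as above; the remaining details are routine and can be omitted as in Lemma~\ref{lem002}.
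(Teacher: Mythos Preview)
Your proposal is correct and follows essentially the same approach as the paper: integration by parts for the forward implications and the antiderivative $G(x)=\int_0^x g$ plus uniqueness of Fourier series for the converse, with the paper likewise omitting (b) as ``similar.'' One tiny slip: in your $(ii^*)\Rightarrow(i^*)$ the choice $G(x)=f_0+\int_0^x g$ does not automatically give $G_0=f_0$; the clean fix is to take $G(x)=\int_0^x g$, deduce $G_k=f_k$ for $k\ge 1$, and conclude $f-G$ is a constant a.e., which already yields $(i^*)$.
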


\begin{proof}
(a) We have $(i) \Rightarrow (ii) $ because $ g_0 = \frac{1}{\pi}
\int_0^\pi g(x) dx = \frac{1}{\pi} (f(\pi)-f(0))=0, $ and $$ g_k =
\frac{1}{\pi} \int_0^\pi g(x) \sqrt{2} \cos kx dx = \frac{1}{\pi}
f(x) \sqrt{2} \cos kx \text{\huge $\vert$}_0^\pi  + \frac{k}{\pi}
\int_0^\pi f(x) \sqrt{2} \sin kx dx = k f_k $$ for every $ k \in
\mathbb{N}. $

To prove that $(ii) \Rightarrow (i), $ we set $ G(x) = \int_0^x
g(t) dt ;$ then $G(\pi) = G(0) = 0$ because $g_0 = 0.$ The same
computation as above shows that  $ g_k = k G_k \; \forall k \in
\mathbb{N}, $ so the sine Fourier coefficients of two
$L^1$--functions $G$ and $f$ coincide. Thus, $ G(x) = f(x), $
which completes the proof of (a).

The proof of (b) is omitted because it is similar to the proof of
(a).
\end{proof}

Let
\begin{equation}
\label{2.1}
Q \sim   \sum_{k=1}^\infty \tilde{q} (k) \sqrt{2} \sin kx
\end{equation}
be the sine Fourier expansion of $ Q. $
We set also
\begin{equation}
\label{2.3} \tilde{V} (0) = 0, \qquad
\tilde{V}(k) = k \tilde{q} (k) \quad  \text{for} \; k \in \mathbb{N}.
\end{equation}

\begin{Proposition}
\label{prop2.1} In the above notations, if $ y \in H^1_{Dir}, $
then we have $ y  = \sum_{k=1}^\infty y_k \sin kx \in D(L_{Dir}) $
and $ \ell (y) = h = \sum_{k=1}^\infty h_k \sqrt{2} \sin kx \in
H^0 $ if and only if
\begin{equation}
\label{2.5} h_k = h_k (y) = k^2 y_k + \frac{1}{\sqrt{2}}
\sum_{k=1}^\infty \left ( \tilde{V} (|k-m|) - \tilde{V} (k+m)
\right ) y_m + C y_k,   \quad  \sum |h_k|^2 < \infty,
\end{equation}
i.e.,
\begin{equation}
\label{2.7} D(L_{Dir}) = \left \{ y \in H^1_{Dir} : \quad (h_k
(y)_1^\infty \in \ell^2 (\mathbb{N}) \right \}, \quad
 L_{Dir} (y) = \sum_{k=1}^\infty h_k (y) \sqrt{2} \sin kx.
\end{equation}
\end{Proposition}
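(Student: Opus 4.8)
The plan is to follow verbatim the argument structure of Proposition~\ref{prop001}, replacing the exponential o.n.b.\ with the Dirichlet sine basis and using Lemma~\ref{lem2.1} in place of Lemma~\ref{lem002}, the point being that the only genuinely new ingredient is the computation of how multiplication by $Q$ (given by its \emph{sine} series \eqref{2.1}) acts on a function given by its \emph{sine} series, which is where the combination $\tilde V(|k-m|)-\tilde V(k+m)$ in \eqref{2.5} comes from. First I would take $y\in D(L_{Dir})$, so by \eqref{2.0} we have $y\in H^1_{Dir}$ with $z:=y'-Qy\in W^1_1([0,\pi])$ and $\ell(y)=-z'-Qy'+Cy=h\in H^0$; expanding $y=\sum y_k\sqrt2\sin kx$, and noting that by Lemma~\ref{lem2.1}(a) the derivative $y'$ has cosine series $\sum k y_k\sqrt2\cos kx$ (here one uses $y(0)=y(\pi)=0$ crucially), one computes the sine coefficients of $z=y'-Qy$ and the sine coefficients of $-z'$ (this last step again via Lemma~\ref{lem2.1}, now part (b), since $z$ is absolutely continuous with $z(\pi)=z(0)$ — wait, no: here $z$ is merely $W^1_1$ with no boundary condition, so one uses the part of Lemma~\ref{lem2.1} that does not presuppose a vanishing boundary term, i.e.\ part (a) read in the direction "$z'$ has sine coefficients $k\cdot(\text{cosine coeffs of }z)$" only after checking $\int_0^\pi z'=0$; I would instead just substitute $z_k$ into the relation coming from the cosine expansion of $z'$ and the sine expansion of $-z'+Cy-Qy'$). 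Matching coefficients gives \eqref{2.5}, and $\sum|h_k|^2<\infty$ is exactly $h\in H^0$.

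The key computational step is the following: if $Q\sim\sum_j\tilde q(j)\sqrt2\sin jx$ and $w\sim\sum_m w_m\sqrt2\cos mx$ (which is the shape of $y'$, with $w_m=m y_m$), then using $2\sin jx\cos mx=\sin(j+m)x+\sin(j-m)x=\sin(j+m)x-\sin(|m-j|)x\cdot\mathrm{sgn}(m-j)$ — more cleanly, $\sin jx\cos mx=\tfrac12\big(\sin(j+m)x+\sin(j-m)x\big)$ with the convention $\sin(-t)=-\sin t$ — one collects, for the $k$-th sine coefficient of $Qw$, the contributions from $j+m=k$ and from $j-m=\pm k$, producing precisely $\tfrac1{\sqrt2}\sum_m(\tilde V(|k-m|)-\tilde V(k+m))y_m$ after inserting $\tilde V(j)=j\tilde q(j)$ and bookkeeping the two occurrences (from $-z'$ and from $-Qy'$, each contributing a factor $m$ or $k$; the $ik z_k$-type term in the exponential proof is replaced by the real identity $k^2y_k-k\sum_m(\cdots)$ exactly as in \eqref{0017}). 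I would present this collapse in two or three lines rather than grinding it; the algebra is elementary trigonometry plus reindexing.

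For the converse, if \eqref{2.5} holds with $\sum|h_k|^2<\infty$, I would reverse the substitution: set $z_k$ to be the sine coefficients dictated by $z=y'-Qy$ (well-defined since $y\in H^1_{Dir}\subset L^1$ and $Qy\in L^1$), observe from \eqref{2.5} that $(k z_k)$ are the cosine coefficients of an $L^1$ function (namely $Cy-h-Qy'$), and invoke Lemma~\ref{lem2.1}(b) to conclude that $z$ is absolutely continuous with $z'$ having those cosine coefficients — hence $z\in W^1_1([0,\pi])$ and $\ell(y)=-z'-Qy'+Cy=h\in H^0$, so $y\in D(L_{Dir})$ by \eqref{2.0} and $L_{Dir}(y)=h$. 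Then \eqref{2.7} is just a restatement. The main obstacle I anticipate is purely notational: keeping the $\sqrt2$ normalization factors and the $\mathrm{sgn}$/absolute-value conventions for $\sin$ of negative arguments consistent through the product-to-sum expansion, and making sure the boundary-term integrations by parts that underlie Lemma~\ref{lem2.1} are applied in the correct direction (Dirichlet condition on $y$, no condition on $z$ beyond $W^1_1$). There is no analytic difficulty — the continuity/regularity of $R_1,R_2$ is already handled by Lemma~\ref{lem2.02} and Theorem~\ref{thm2.1}, and completeness of the sine system in $H^0$ and in $H^1_{Dir}$ (with the weighted norm \eqref{009}) is recorded above.
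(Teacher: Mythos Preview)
Your approach is essentially identical to the paper's: expand $y$ in the sine basis, $z=y'-Qy$ in the cosine basis, use Lemma~\ref{lem2.1}(a) for $y'$ and Lemma~\ref{lem2.1}(b) for $z'$, compute $(Qy)_k$ (cosine) and $(Qy')_k$ (sine) via product-to-sum formulas, and combine to obtain \eqref{2.5}; the converse runs the same steps backward through Lemma~\ref{lem2.1}. Two small corrections to your write-up: you have the sine/cosine roles swapped in several places ($z$ gets a \emph{cosine} expansion, $z'$ a \emph{sine} expansion, and in the converse $(kz_k)$ are \emph{sine} coefficients of an $L^1$ function), and your worry about a boundary condition on $z$ is unnecessary, since Lemma~\ref{lem2.1}(b) requires none---that is precisely why the paper expands $z$ in cosines rather than sines.
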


\begin{proof}
By (\ref{2.0}), if $y \in D(L_{Dir} ),$ then $y \in H^1_{Dir}$ and
$$ \ell (y) = - z^\prime - Qy^\prime + Cy = h \in L^2 ([0,\pi]),
$$ where
\begin{equation}
\label{2.9} z:= y^\prime - Qy \in W^1_1 ([0,\pi]).
\end{equation}
Let $$ y \sim \sum_{k=1}^\infty y_k \sqrt{2} \sin kx, \quad z \sim
\sum_{k=1}^\infty z_k \sqrt{2} \cos kx, \quad
 h \sim \sum_{k=1}^\infty h_k \sqrt{2} \sin kx
 $$
be the sine series of $y $ and $h,$ and the cosine series of $z.$
Lemma~\ref{lem2.1} yields $$ z^\prime \sim \sum_{k=1}^\infty (-k
z_k) \sqrt{2} \sin kx, \quad y^\prime \sim \sum_{k=1}^\infty k y_k
\sqrt{2} \cos kx. $$ Therefore,
\begin{equation}
\label{2.8}
h_k = k z_k - (Q y^\prime)_k  + Cy_k,
\quad k \in \mathbb{N},
\end{equation}
where $(Q y^\prime)_k$
are the sine coefficients of the
function $Q y^\prime  \in L^1 ([0,\pi]). $

By (\ref{2.9}), we have $$ z_k = ky_k - (Qy)_k, $$ where $(Qy)_k $
is the $k$-th cosine coefficient of $Qy.$ It can be found by the
formula $$ (Qy)_k = \frac{1}{\pi} \int_0^\pi Q(x) y(x) \sqrt{2} \cos
kx dx = \sum_{m=1}^\infty a_m \cdot y_m, $$ with $$a_m  =a_m (k) =
\frac{1}{\pi} \int_0^\pi Q(x) \sqrt{2} \cos kx \sqrt{2} \sin mx dx =
\frac{1}{\pi} \int_0^\pi Q(x) [\sin (m+k)x  + \sin (m-k)x ] dx $$ $$
= \frac{1}{\sqrt{2}} \begin{cases}  \tilde{q}(m+k) + \tilde{q}(m-k),
& m>k\\ \tilde{q}(2k), & m=k\\ \tilde{q}(m+k) - \tilde{q}(k-m) &
m<k.
\end{cases}
$$ Therefore,
\begin{equation}
\label{2.11} (Qy)_k = \frac{1}{\sqrt{2}}\sum_{m=1}^\infty
\tilde{q}(m+k) y_m - \frac{1}{\sqrt{2}}\sum_{m=1}^{k-1}
\tilde{q}(k-m) y_m +\frac{1}{\sqrt{2}} \sum_{m= k+1}^\infty
\tilde{q}(m-k) y_m.
\end{equation}

In an analogous way we can find the sine coefficients of
$Qy^\prime $ by the formula $$ (Qy^\prime)_k = \frac{1}{\pi}
\int_0^\pi Q(x) y^\prime (x) \sqrt{2} \sin kx dx =
\sum_{m=1}^\infty b_m \cdot m y_m, $$ where $b_m $ are the cosine
coefficients of $Q(x) \sqrt{2} \sin kx, $ i.e., $$ b_m =b_m (k)=
\frac{1}{\pi} \int_0^\pi Q(x)\sqrt{2} \sin kx \sqrt{2} \cos mx
=\frac{1}{\pi} \int_0^\pi Q(x) [\sin (k+m)x + \sin (k-m)x ] dx $$
$$=\frac{1}{\sqrt{2}} \begin{cases} \tilde{q}(k+m)
+\tilde{q}(k-m), & m<k,\\ \tilde{q}(2k), & m=k,\\ \tilde{q}(k+m) -
\tilde{q}(m-k) & m>k.
\end{cases}
$$ Thus we get
\begin{equation}
\label{2.12} (Qy^\prime )_k =\frac{1}{\sqrt{2}} \sum_{m=1}^\infty
\tilde{q}(m+k) m y_m + \frac{1}{\sqrt{2}}\sum_{m=1}^{k-1}
\tilde{q}(k-m) my_m -\frac{1}{\sqrt{2}}\sum_{m= k+1}^\infty
\tilde{q}(m-k) m y_m.
\end{equation}
Finally, (\ref{2.11}) and (\ref{2.12}), imply that $$  k^2 y_k - k
(Qy)_k - (Qy^\prime )_k $$ $$ = k^2 y_k - \frac{1}{\sqrt{2}}
\sum_{m=1}^\infty (m+k) \tilde{q} (m+k) +\frac{1}{\sqrt{2}}
\sum_{m=k+1}^\infty (m-k) \tilde{q} (m-k) +\frac{1}{\sqrt{2}}
\sum_{m=1}^{k-1} (k-m) \tilde{q} (k-m). $$ Hence, in view of
(\ref{2.3}), we have $$ h_k = k^2 y_k + \frac{1}{\sqrt{2}}
\sum_{m=1}^\infty \left ( \tilde{V} (|k-m|) - \tilde{V} (k+m)
\right ) y_m + C y_k, $$ i.e.,  (\ref{2.5}) holds.

Conversely, if (\ref{2.5}) holds, then going back we can see, by
(\ref{2.8}), that $z= y^\prime - Q y \in L^2 ([0,\pi]) $ has the
property that $k z_k, \; k \in \mathbb{N}, $ are the sine
coefficients of an $L^1 ([0,\pi])$--function. Therefore, by
Lemma~\ref{lem2.1}, $z$ is absolutely continuous and those numbers
are the sine coefficients of its derivative $z^\prime.$  Hence, $z
= y^\prime - Qy \in W^1_1 ([0,\pi]) $ and $ \ell (y) = h, $ i.e.,
$ y \in D(L_{Dir}) $ and $L_{Dir} (y) = h. $
\end{proof}

Let $$ \mathcal{F} : H^0 \to \ell^2 (\mathcal{N}) $$ be the
Fourier isomorphisms that corresponds to each function $f \in H^0
$ the sequence $(f_k)_{k\in\mathbb{N}} $ of its Fourier
coefficients $ f_k = (f,\sqrt{2} \sin kx),$ and let
$\mathcal{F}^{-1} $ be the inverse Fourier isomorphism.

Consider the unbounded operator $\mathcal{L}_{d} $  and
 acting in $\ell^2 (\mathbb{N}) $ as
\begin{equation}
\label{2.31} \mathcal{L}_{d} (z) = \left ( h_k (z)  \right )_{k
\in \mathbb{N}}, \quad h_k (z) = k^2 z_k + \frac{1}{\sqrt{2}}
\sum_{m \in \mathbb{N}} \left ( \tilde{V}(|k-m|) -\tilde{V} (k+m)
\right )
 z_m +C z_k
\end{equation}
in the domain
\begin{equation}
\label{2.32} D(\mathcal{L}_{d} ) = \left \{ z \in \ell^2 (|k|,
\mathbb{N}): \; \;\mathcal{L}_{d} (z) \in \ell^2 (\mathbb{N})
\right \},
\end{equation}
where $\ell^2 (|k|, \mathbb{N})$ is the weighted $\ell^2$--space
$$ \ell^2 (|k|,\mathbb{N})= \left \{z=(z_k)_{k \in \mathbb{N}}:
\;\; \sum_{k} |k|^2 |z_k|^2 < \infty \right \}.$$

 In view of (\ref{009}) and Proposition \ref{prop2.1}, the
following theorem holds.
\begin{Theorem}
\label{thm2.2} In the above notations, we have
\begin{equation}
\label{2.33} D(L_{Dir})=\mathcal{F}^{-1} \left ( D(\mathcal{L}_{d}
) \right )
\end{equation}
and
\begin{equation}
\label{2.34} L_{Dir} = \mathcal{F}^{-1} \circ
 \mathcal{L}_{d} \circ \mathcal{F}.
\end{equation}
\end{Theorem}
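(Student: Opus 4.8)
The plan is to deduce both assertions~(\ref{2.33}) and~(\ref{2.34}) by transporting, through the unitary Fourier isomorphism $\mathcal{F},$ the purely Fourier--analytic description of $L_{Dir}$ already furnished by Proposition~\ref{prop2.1}. The one preliminary point to settle is the identification of the weighted sequence space with $H^1_{Dir}.$ Since $1\le|k|^2$ for every $k\in\mathbb{N},$ the space $\ell^2(|k|,\mathbb{N})$ occurring in~(\ref{2.32}) coincides (with equivalent norms) with $\{z:\sum_k(1+|k|^2)|z_k|^2<\infty\},$ so by~(\ref{009}) applied with $bc=Dir$ the map $\mathcal{F}$ carries $H^1_{Dir}$ onto $\ell^2(|k|,\mathbb{N})$ and $\mathcal{F}^{-1}$ carries $\ell^2(|k|,\mathbb{N})$ onto $H^1_{Dir}.$ Note also that $D(L_{Dir})\subset H^1_{Dir},$ directly from the definition~(\ref{2.0}) together with~(\ref{007}), so that Proposition~\ref{prop2.1} is applicable to every element of the domain.

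Next I would record the tautological observation that, by construction, the $k$-th coordinate $h_k(z)$ of $\mathcal{L}_{d}(z)$ in~(\ref{2.31}) is literally the right-hand side of~(\ref{2.5}) with $(y_m)$ replaced by $(z_m);$ in other words, for $y\in H^1_{Dir}$ the sequence $\mathcal{L}_{d}(\mathcal{F}y)$ is precisely the sequence $(h_k(y))_{k\ge1}$ appearing in Proposition~\ref{prop2.1}.

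With these two remarks the domain identity~(\ref{2.33}) follows by a chain of equivalences. If $y\in D(L_{Dir}),$ then $y\in H^1_{Dir},$ hence $\mathcal{F}y\in\ell^2(|k|,\mathbb{N});$ moreover $\ell(y)=h\in H^0,$ so Proposition~\ref{prop2.1} gives $\sum_k|h_k(y)|^2<\infty,$ i.e. $\mathcal{L}_{d}(\mathcal{F}y)\in\ell^2(\mathbb{N}),$ and therefore $\mathcal{F}y\in D(\mathcal{L}_{d}).$ Conversely, if $z\in D(\mathcal{L}_{d}),$ then $z\in\ell^2(|k|,\mathbb{N}),$ so $y:=\mathcal{F}^{-1}z\in H^1_{Dir},$ and $\mathcal{L}_{d}(z)\in\ell^2(\mathbb{N})$ says exactly that the sequence $(h_k(y))$ of~(\ref{2.5}) is square summable; Proposition~\ref{prop2.1} then yields $y\in D(L_{Dir}).$ This establishes $D(L_{Dir})=\mathcal{F}^{-1}\left(D(\mathcal{L}_{d})\right).$ Finally, for $y\in D(L_{Dir})$ Proposition~\ref{prop2.1} also gives $L_{Dir}(y)=\ell(y)=\sum_{k\ge1}h_k(y)\sqrt{2}\sin kx,$ whose sequence of sine--Fourier coefficients is $\mathcal{L}_{d}(\mathcal{F}y);$ applying $\mathcal{F}$ gives $\mathcal{F}\circ L_{Dir}=\mathcal{L}_{d}\circ\mathcal{F}$ on $D(L_{Dir}),$ i.e. $L_{Dir}=\mathcal{F}^{-1}\circ\mathcal{L}_{d}\circ\mathcal{F},$ which is~(\ref{2.34}).

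Since essentially all the analytic content has been packaged in advance into Proposition~\ref{prop2.1} (and, behind it, into Lemma~\ref{lem2.1} and Theorem~\ref{thm2.1}), there is no real obstacle here; the only step demanding a moment of care is the weighted--space bookkeeping, namely verifying that the condition ``$y\in H^1_{Dir}$'' is faithfully encoded by ``$\mathcal{F}y\in\ell^2(|k|,\mathbb{N})$'' with the weight $|k|^2$ rather than $1+k^2,$ and that membership in $D(L_{Dir})$ already forces $y\in H^1_{Dir}$ so that Proposition~\ref{prop2.1} applies throughout.
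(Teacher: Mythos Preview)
Your proof is correct and follows exactly the approach the paper indicates: the paper itself gives no detailed argument, stating only that the theorem holds ``in view of (\ref{009}) and Proposition~\ref{prop2.1},'' and your proposal is precisely the natural unpacking of that sentence. The one extra care you take---checking that the weight $|k|^2$ in $\ell^2(|k|,\mathbb{N})$ matches the $1+k^2$ weight in~(\ref{009}) since $k\ge1$---is a legitimate bookkeeping point that the paper leaves implicit.
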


If it does not lead to confusion, for convenience we will loosely
use one and the same notation $L_{Dir}$  for the operators
$L_{Dir}$ and $\mathcal{L}_{d}. $

\section{Localization of spectra}

Throughout this section we need the following
lemmas.

\begin{Lemma}
\label{lem3.1} For each $n \in \mathbb{N} $
\begin{equation}
\label{3.1} \sum_{k \neq \pm n} \frac{1}{|n^2 - k^2|} <
\frac{2\log 6n}{n};
\end{equation}
\begin{equation}
\label{3.2} \sum_{k \neq \pm n} \frac{1}{|n^2 - k^2|^2} <
\frac{4}{n^2}.
\end{equation}
\end{Lemma}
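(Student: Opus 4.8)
The plan is to establish both bounds by elementary estimates, separating the sum according to whether $k$ is close to $\pm n$ or far from it.

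\medskip

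For \eqref{3.1}, I would first use the symmetry $k \mapsto -k$ to write
$$\sum_{k \neq \pm n} \frac{1}{|n^2-k^2|} = \frac{1}{n^2} + 2\sum_{k=1,\,k\neq n}^\infty \frac{1}{|n^2-k^2|},$$
where the isolated term $1/n^2$ comes from $k=0$. Then I would factor $|n^2-k^2| = |n-k|\,(n+k)$ for $k \geq 1$ and split the sum over $k \in \mathbb{N}\setminus\{n\}$ into the ``near'' range, say $1 \le k \le 2n$ (excluding $k=n$), and the ``far'' range $k > 2n$. In the near range one has $n+k \geq n$, so each term is at most $\frac{1}{n\,|n-k|}$, and $\sum_{1 \le k \le 2n,\,k\neq n} \frac{1}{|n-k|} \le 2\sum_{j=1}^{n}\frac1j \le 2(1+\log n)$ (actually $2\sum_{j=1}^{n-1}\frac 1j$ plus the $k=2n$ boundary term, which only improves things); this contributes $O(\log n / n)$. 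In the far range $k>2n$ one has $|n-k| = k-n > k/2$ and $n+k>k$, so each term is at most $\frac{2}{k^2}$, and $\sum_{k>2n} \frac{2}{k^2} < \frac{1}{n}$. Collecting the constants carefully — the bound $\sum_{j=1}^{n}\frac1j < 1+\log n \le \log(3n)$ for $n\ge 1$, together with the $1/n^2 \le 1/n$ term and the tail — yields a total below $\frac{2\log 6n}{n}$; the constant $6$ (rather than something smaller) gives comfortable slack to absorb all the lower-order pieces without a delicate optimization.

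\medskip

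For \eqref{3.2} the argument is easier because the series converges faster. Again by symmetry,
$$\sum_{k \neq \pm n}\frac{1}{|n^2-k^2|^2} = \frac{1}{n^4} + 2\sum_{k=1,\,k\neq n}^\infty \frac{1}{(n-k)^2(n+k)^2}.$$
Bounding $(n+k)^2 \ge n^2$ in the near range $1 \le k \le 2n$ gives a contribution at most $\frac{2}{n^2}\sum_{j\ge 1}\frac{1}{j^2} = \frac{2}{n^2}\cdot\frac{\pi^2}{6} < \frac{3.3}{n^2}$ — this already exceeds the target, so instead I would be slightly more careful: split off the two terms $k = n\pm 1$ (each equal to $\frac{1}{(2n\pm1)^2} < \frac{1}{n^2}$ combined) and for $|n-k|\ge 2$ use $(n-k)^2(n+k)^2 \ge 4n^2$ when $1\le k\le 2n$ and $(n-k)^2(n+k)^2 > k^4/4$ when $k>2n$. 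This way the bulk sum is controlled by $\frac{1}{n^2}\sum_{j\ge 2}\frac{1}{j^2} = \frac{1}{n^2}(\frac{\pi^2}{6}-1) < \frac{0.65}{n^2}$ from the near range plus $\sum_{k>2n}\frac{4}{k^4} = O(1/n^3)$ from the far range, and adding the $k=n\pm1$ terms and the $1/n^4$ term keeps the total below $\frac{4}{n^2}$.

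\medskip

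The main obstacle, such as it is, is purely bookkeeping: choosing the split point and deciding which few terms to isolate so that the crude per-range estimates sum to something below the stated constants, without introducing a genuine optimization problem. There is no conceptual difficulty — the only care needed is to verify the inequalities hold for \emph{all} $n \in \mathbb{N}$, including the small values $n=1,2$, where the logarithm and the generous constants $6$ and $4$ leave ample room.
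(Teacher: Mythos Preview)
The paper itself omits the proof entirely (``The proof is elementary, and therefore, we omit it''), so there is no argument to compare against. Your outline for \eqref{3.2} is essentially workable (with some arithmetic to tidy up, and $n=1$ checked by hand). But your argument for \eqref{3.1} has a real gap in the constant.

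The crude inequality $n+k\ge n$ in the near range yields at best
\[
2\sum_{k=1,\,k\neq n}^{2n}\frac{1}{|n-k|(n+k)}\;\le\;\frac{2}{n}\cdot 2\sum_{j=1}^{n}\frac{1}{j}\;\sim\;\frac{4\log n}{n},
\]
which is asymptotically \emph{twice} the claimed bound $\tfrac{2\log 6n}{n}\sim\tfrac{2\log n}{n}$. No amount of adjusting the lower--order pieces, or the constant $6$, fixes this; the defect is in the leading coefficient. The remedy is to use the partial--fraction identity
\[
\frac{1}{|n^2-k^2|}=\frac{1}{2n}\Bigl(\frac{1}{|n-k|}\pm\frac{1}{n+k}\Bigr)
\]
(plus sign for $k<n$, minus for $k>n$), which produces the factor $\tfrac{1}{2n}$ rather than $\tfrac{1}{n}$. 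A short telescoping then gives the exact value
\[
\sum_{k\neq\pm n}\frac{1}{|n^2-k^2|}=\frac{2}{n}\sum_{j=1}^{2n}\frac{1}{j}-\frac{1}{2n^2}
\;<\;\frac{2}{n}\bigl(1+\log 2n\bigr)=\frac{2\log(2en)}{n}<\frac{2\log 6n}{n},
\]
since $2e<6$. The same identity yields $\sum_{k\neq\pm n}\frac{1}{(n^2-k^2)^2}=\frac{\pi^2}{6n^2}-\frac{3}{8n^4}<\frac{2}{n^2}$, which is stronger than \eqref{3.2} and avoids all the case--splitting.
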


The proof is elementary, and therefore, we omit it.

\begin{Lemma}
\label{lem3.2} There exists an absolute constant $C> 0 $ such that

(a) if $n \in \mathbb{N} $ and $b\geq 2, $ then
\begin{equation}
\label{3.3} \sum_k \frac{1}{|n^2 -k^2|+b} \leq C \frac{\log
b}{\sqrt{b}};
\end{equation}

(b)  if $n \geq 0$ and $b > 0 $  then
\begin{equation}
\label{3.4} \sum_{k \neq \pm n} \frac{1}{|n^2 - k^2|^2+ b^2} \leq
\frac{C}{(n^2+b^2)^{1/2} (n^4 + b^2)^{1/4}}.
\end{equation}
\end{Lemma}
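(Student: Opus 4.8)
Both estimates rest on two elementary ingredients, so the plan is to isolate them first. The key counting fact is that for every $n\ge 0$ and $T>0$,
\[
\#\{k\in\mathbb{Z}:\ |n^2-k^2|\le T\}\ \le\ 2+\frac{4T}{\max(n,\sqrt{T})}.
\]
I would prove this by looking at $k\ge 0$: the condition $n^2-T\le k^2\le n^2+T$ forces $0\le k\le\sqrt{2T}$ if $n^2\le T$, and $\sqrt{n^2-T}\le k\le\sqrt{n^2+T}$ if $n^2>T$; since $\sqrt{n^2+T}-\sqrt{n^2-T}=2T/(\sqrt{n^2+T}+\sqrt{n^2-T})\le 2T/n$, in both cases the number of admissible $k\ge 0$ is at most $1+2T/\max(n,\sqrt T)$, and doubling accounts for $k<0$. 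The second ingredient is a dyadic splitting of the ``nonresonant'' range $\{|n^2-k^2|\ge b\}$ into the blocks $B_j=\{\,2^jb\le|n^2-k^2|<2^{j+1}b\,\}$, $j\ge 0$, on which $|n^2-k^2|$ is comparable to $2^jb$.

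For part (a) I would write $\sum_k=\sum_{|n^2-k^2|<b}+\sum_{|n^2-k^2|\ge b}$. In the first sum each term is at most $1/b$ and, by the counting fact with $T=b$ (using $b\ge 2$), there are at most $C\sqrt b$ terms, so this piece is $\le C/\sqrt b$. In the second sum, $1/(|n^2-k^2|+b)\le 1/|n^2-k^2|$, so $B_j$ contributes at most $C\sqrt{2^{j+1}b}\,(2^jb)^{-1}=C\,2^{-j/2}/\sqrt b$, and summing the geometric series over $j\ge 0$ gives $\le C/\sqrt b$. Hence $\sum_k(|n^2-k^2|+b)^{-1}\le C/\sqrt b$; since $b\ge 2$, the quantity $\log b$ is bounded below by a positive absolute constant, so this is $\le C\log b/\sqrt b$.

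For part (b) I would distinguish $b\le n$ and $b>n$. If $b\le n$ (so $n\ge 1$), then for all $k\ne\pm n$ one has $|n^2-k^2|\ge 2n-1\ge n\ge b$, whence $|n^2-k^2|^2+b^2\le 2|n^2-k^2|^2$ and, by \eqref{3.2}, $\sum_{k\ne\pm n}(|n^2-k^2|^2+b^2)^{-1}\le 2\sum_{k\ne\pm n}|n^2-k^2|^{-2}<8/n^2$; and for $b\le n$ the right side of \eqref{3.4} is comparable to $1/n^2$, so the estimate holds. If $b>n$, I split the sum as in (a). The resonant part has at most $C\,b/\max(n,\sqrt b)$ terms (since $b>n$, this quantity is $\ge 1$, so the additive constant in the count is harmless), each $\le 1/b^2$, hence is $\le C/(b\max(n,\sqrt b))$. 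For the nonresonant part, $B_j$ contributes at most $C\,(2^jb/\max(n,\sqrt{2^jb}))\,(2^jb)^{-2}=C/(2^jb\,\max(n,\sqrt{2^jb}))$; the blocks with $2^jb<n^2$ give a geometric series (ratio $1/2$) summing to $\le C/(bn)$, and those with $2^jb\ge n^2$ a series in $(2^jb)^{-3/2}$ summing to $\le C(n^2)^{-3/2}\le C/(bn)$ when $b<n^2$, or to $\le Cb^{-3/2}$ when $b\ge n^2$; in every case the total is $\le C/(b\max(n,\sqrt b))$. It then remains to check $1/(b\max(n,\sqrt b))\le C/((n^2+b^2)^{1/2}(n^4+b^2)^{1/4})$ for $b>n$: there $(n^2+b^2)^{1/2}$ is comparable to $b$, while $(n^4+b^2)^{1/4}$ is comparable to $n$ if $b\le n^2$ and to $\sqrt b$ if $b>n^2$, i.e.\ to $\max(n,\sqrt b)$ in both subcases.

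The only genuine obstacle is the counting fact together with the bookkeeping in case (b): one must use the sharp count of order $T/\max(n,\sqrt T)$ rather than the cruder $\sqrt T$, because in the range $n<b\le n^2$ the target $1/(b\max(n,\sqrt b))=1/(bn)$ is strictly smaller than $1/b^{3/2}$, so the weaker count would not give \eqref{3.4}. Everything else is routine summation of geometric series and a comparison of the three regimes $b\le n$, $n<b\le n^2$, $b>n^2$ with the single right side of \eqref{3.4}.
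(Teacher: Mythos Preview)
The paper does not prove this lemma at all; it simply refers the reader to \cite{DM15}, Appendix, Lemma~79. Your argument is correct and self-contained, so in that sense you supply more than the paper does here. In fact your treatment of part~(a) gives the stronger bound $\sum_k(|n^2-k^2|+b)^{-1}\le C/\sqrt b$ without the $\log b$ factor, as you observe.

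One point in part~(b) deserves a small repair. Your justification ``since $b>n$, the quantity $b/\max(n,\sqrt b)$ is $\ge1$, so the additive constant in the count is harmless'' fails when $n=0$ and $0<b<1$. The cleanest fix is to note that once $k=\pm n$ are excluded, the count actually carries \emph{no} additive constant: for $n^2>T$ the interval $[\sqrt{n^2-T},\sqrt{n^2+T}]$ has length at most $2T/n$ and contains $n$, so after removing $n$ it contains at most $2T/n$ integers; for $n^2\le T$ the interval $[0,\sqrt{n^2+T}]$ has length at most $\sqrt{2T}$ and again contains $n$. Hence $\#\{k\ne\pm n:|n^2-k^2|\le T\}\le CT/\max(n,\sqrt T)$ outright, and the rest of your dyadic summation goes through verbatim. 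Alternatively, one can dispose of $n=0$ directly: then the left side of \eqref{3.4} equals $2\sum_{k\ge1}(k^4+b^2)^{-1}$, which is at most $Cb^{-3/2}$ for $b\ge1$ and at most $2\zeta(4)\le C b^{-3/2}$ for $0<b<1$.
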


A proof of this lemma can be found in
\cite{DM15}, see Appendix, Lemma~79.

We study the localization of spectra of the operators $L_{Per^\pm}
$ and $L_{Dir} $ by using their Fourier representations. By
(\ref{0031}) and Theorem~\ref{thm001}, each of the operators $L=
L_{Per^\pm}$ has the form
\begin{equation}
\label{3.5} L = L^0 +V,
\end{equation}
where the operators $L^0 $ and $V$ are defined by their action on
the sequence of Fourier coefficients of any $y =
\sum_{\Gamma_{Per^\pm}} y_k \exp {ikx} \in H^1_{Per^\pm}:$
\begin{equation}
\label{3.6} L^0: \; (y_k) \to (k^2 y_k), \quad k \in
\Gamma_{Per^\pm}
\end{equation}
and
\begin{equation}
\label{3.7} V:\; (y_m) \to (z_k), \quad z_k = \sum_{m} V(k-m) y_m,
\quad k, m \in \Gamma_{Per^\pm}.
\end{equation}
(We suppress in the notations of $L^0 $ and $V$ the dependence on
the boundary conditions $Per^\pm.$)

In the case of Dirichlet boundary condition, by  (\ref{2.31}) and
Theorem~\ref{thm2.2}, the operator $L = L_{Dir}$ has the form
(\ref{3.5}), where the operators $L^0 $ and $V$ are defined by
their action on the sequence of Fourier coefficients of any $y =
\sum_{\mathbb{N}} y_k \sqrt{2} \sin kx \in H^1_{Dir}:$
\begin{equation}
\label{3.9} L^0: \; (y_k) \to (k^2 y_k), \quad k \in
\mathbb{N}
\end{equation}
and
\begin{equation}
\label{3.10} V: \; (y_m) \to (z_k), \quad z_k = \frac{1}{\sqrt{2}}
\sum_{m} \left (\tilde{V}(|k-m|) -\tilde{V} (k+m) \right ) y_m,
\quad k, m \in \mathbb{N}.
\end{equation}
(We suppress in the notations of $L^0 $
and $V$ the dependence on
the boundary conditions $Dir.$)

Of course, in the regular case where $ v \in L^2 ([0,\pi]),$ the
operators $L^0$ and $V$ are, respectively, the Fourier
representations of $ -d^2/dx^2 $ and the multiplication operator
$y\to v \cdot y.$ But if   $v \in H^{-1}_{loc} (\mathbb{R})$ is a
singular periodic potential, then the situation is more
complicated, so we are able to write (\ref{3.5}) with (\ref{3.6})
and (\ref{3.7}), or (\ref{3.9}) and (\ref{3.10}), only after
having the results from Section 3 and 4 (see Theorem~\ref{thm001}
and Theorem~\ref{thm2.2}).

 In view of (\ref{3.6}) and (\ref{3.9}) the operator $L^0 $
 is diagonal,
 so, for $\lambda \neq k^2, \; k \in \Gamma_{bc}, $
 we may consider (in the space $\ell^2 (\Gamma_{bc})$)
its inverse operator
\begin{equation}
\label{3.11} R^0_\lambda :\; (z_k) \to \left ( \frac{z_k}{\lambda-
k^2} \right ),
 \qquad k \in \Gamma_{bc}.
\end{equation}

One of the technical difficulties that arises for singular
potentials is connected with the standard perturbation type
formulae for the resolvent $R_\lambda = (\lambda - L^0 -V)^{-1}.$
In the case where $v \in L^2 ([0,\pi]) $ one can represent the
resolvent in the form (e.g., see \cite{DM15}, Section 1.2)
\begin{equation}
\label{3.21} R_\lambda = (1-R_\lambda^0 V)^{-1} R_\lambda^0 =
\sum_{k=0}^\infty (R_\lambda^0 V)^k R_\lambda^0,
\end{equation}
or
\begin{equation}
\label{3.22} R_\lambda = R_\lambda^0 (1-V R_\lambda^0 )^{-1}  =
\sum_{k=0}^\infty R_\lambda^0 (V R_\lambda^0)^k.
\end{equation}
 The simplest
conditions that guarantee the convergence of the series
(\ref{3.21}) or (\ref{3.22}) in $\ell^2 $ are $$ \|R_\lambda^0 V\|
<1, \quad \text{respectively,} \quad \|V R_\lambda^0\| < 1. $$
Each of these conditions can be easily verified for large enough
$n$ if $ Re \, \lambda \in [n-1, n+1]$ and $ |\lambda - n^2 | \geq
C(\|v\|), $ which leads to a series of results on the spectra,
zones of instability and spectral decompositions.

The situation is more complicated if $v$ is a singular potential.
Then, in general, there are no good estimates for the norms of
 $      R^0_\lambda V  $ and
$ V R^0_\lambda. $  However, one can write (\ref{3.21}) or
(\ref{3.22}) as
\begin{equation}
\label{3.23} R_\lambda = R^0_\lambda + R^0_\lambda V R^0_\lambda +
R^0_\lambda V R^0_\lambda V R^0_\lambda + \cdots = K^2_\lambda +
\sum_{m=1}^\infty K_\lambda(K_\lambda V K_\lambda)^m K_\lambda,
\end{equation}
provided
\begin{equation}
\label{3.24} (K_\lambda)^2 = R^0_\lambda.
\end{equation}
We define an operator $K= K_\lambda $ with the property
(\ref{3.24})  by its matrix representation
\begin{equation}
\label{3.25} K_{jm} = \frac{1}{(\lambda - j^2)^{1/2}}
\delta_{jm},\qquad j,m \in \Gamma_{bc},
\end{equation}
where $$z^{1/2} = \sqrt{r} e^{i\varphi/2} \quad \text{if} \quad z=
re^{i\varphi}, \;\; 0\leq \varphi < 2\pi. $$

Then $R_\lambda $ is well--defined if
\begin{equation}
\label{3.26} \|K_\lambda V K_\lambda:  \;
\ell^2 (\Gamma_{bc})  \to  \ell^2 (\Gamma_{bc})\| <
1.
\end{equation}

In view of (\ref{0.15}), (\ref{3.7}) and (\ref{3.25}), the matrix
representation of $KVK$ for periodic or anti--periodic boundary
conditions $bc = Per^\pm$ is
\begin{equation}
\label{3.29} (KVK)_{jm} = \frac{V(j-m)}{(\lambda -
j^2)^{1/2}(\lambda - m^2)^{1/2}} =\frac{i(j-m)q(j-m)}{(\lambda -
j^2)^{1/2}(\lambda - m^2)^{1/2}},
\end{equation}
where $j,m \in 2\mathbb{Z} $ for $bc = Per^+,$ and $j,m \in 1+
2\mathbb{Z} $ for $bc=Per^-.$ Therefore, we have for its
Hilbert--Schmidt norm (which majorizes its $\ell^2 $-norm)
\begin{equation}
\label{3.30} \|KVK\|_{HS}^2 = \sum_{j,m \in \Gamma_{Per^\pm}}
\frac{(j-m)^2 |q(j-m)|^2} {|\lambda - j^2| |\lambda - m^2|}.
\end{equation}

By (\ref{2.3}), (\ref{3.10}) and (\ref{3.25}), the matrix
representation of $KVK$ for Dirichlet boundary conditions $bc =
Dir $ is
\begin{equation}
\label{3.32} (KVK)_{jm} = \frac{1}{\sqrt{2}}
\frac{\tilde{V}(|j-m|)}{(\lambda -
j^2)^{1/2}(\lambda - m^2)^{1/2}}
- \frac{1}{\sqrt{2}}
\frac{\tilde{V}(j+m)}{(\lambda -
j^2)^{1/2}(\lambda - m^2)^{1/2}}
\end{equation}
$$ = \frac{1}{\sqrt{2}} \frac{|j-m| \tilde{q}(|j-m|)}{(\lambda -
j^2)^{1/2}(\lambda - m^2)^{1/2}} - \frac{1}{\sqrt{2}} \frac{(j+m)
\tilde{q}(j+m)}{(\lambda - j^2)^{1/2}(\lambda - m^2)^{1/2}}. $$
where $j,m \in \mathbb{N}. $ Therefore, we have for its
Hilbert--Schmidt norm (which majorizes its $\ell^2 $-norm)
\begin{equation}
\label{3.33} \|KVK\|_{HS}^2 \leq 2 \sum_{j,m \in \mathbb{N}}
\frac{(j-m)^2 |\tilde{q}(|j-m|)|^2} {|\lambda - j^2| |\lambda -
m^2|} + 2 \sum_{j,m \in \mathbb{N}} \frac{(j+m)^2
|\tilde{q}(j+m)|^2} {|\lambda - j^2| |\lambda - m^2|}.
\end{equation}

We set for convenience
\begin{equation}
\label{3.35}
\tilde{q} (0) = 0, \quad
\tilde{r} (s) = \tilde{q} (|s|) \quad \text{for} \;\; s \neq 0,
\quad s \in \mathbb{Z} .
\end{equation}
In view of (\ref{3.33}) and (\ref{3.35}), we have
\begin{equation}
\label{3.37} \|KVK\|_{HS}^2 \leq
\sum_{j,m \in \mathbb{Z}}
\frac{(j-m)^2 |\tilde{r}(j-m)|^2} {|\lambda - j^2| |\lambda - m^2|}.
\end{equation}

We divide the plane $\mathbb{C}$ into strips, correspondingly to
the boundary conditions,  as follows:

if $bc = Per^+ $ then $\mathbb{C} = H_0 \cup H_2 \cup H_4 \cup
\cdots, $ and

if $bc = Per^- $ then $\mathbb{C} = H_1 \cup H_3 \cup H_5 \cup
\cdots, $\\ where
\begin{equation}
\label{1.6}
H_0 = \{\lambda \in \mathbb{C}: Re \, \lambda \leq 1\}, \quad
H_1 = \{\lambda \in \mathbb{C}: Re \, \lambda \leq 4\},
\end{equation}
\begin{equation}
\label{1.7}
H_n = \{\lambda \in \mathbb{C}: \; (n-1)^2 \leq Re\, \lambda
\leq (n+1)^2 \},  \quad n \geq 2;
\end{equation}

- if $bc = Dir,$  then
$\mathbb{C} = G_1 \cup G_2 \cup G_3 \cup \cdots, $  where
\begin{equation}
\label{1.8}
G_1 = \{\lambda : Re \, \lambda \leq 2\}, \quad
G_n = \{\lambda :  (n-1)n \leq  Re \, \lambda \leq n(n+1) \},
\quad n \geq 2.
\end{equation}

Consider also the discs
\begin{equation}
\label{1.10}
D_n  = \{\lambda \in \mathbb{C}: \; |\lambda - n^2| < n/4 \},
 \quad n \in \mathbb{N},
\end{equation}
Then, for $n \geq 3,$
\begin{equation}
\label{1.12}
\sum_{k \in n + 2\mathbb{Z}} \frac{1}{|\lambda - k^2|} \leq
C_1 \frac{\log n}{n}, \quad
\sum_{k \in n + 2\mathbb{Z}} \frac{1}{|\lambda - k^2|^2} \leq
\frac{C_1}{n^2},
\quad  \forall \lambda \in H_n \setminus D_n,
\end{equation}
and
\begin{equation}
\label{1.14}
\sum_{k \in \mathbb{Z}} \frac{1}{|\lambda - k^2|} \leq
C_1 \frac{\log n}{n}, \quad
\sum_{k \in \mathbb{Z}} \frac{1}{|\lambda - k^2|^2} \leq
\frac{C_1}{n^2},
\quad \forall \lambda \in G_n \setminus D_n,
\end{equation}
where $C_1$  is an absolute constant.

Indeed, if $\lambda \in H_n, $ then one can easily see that $$
|\lambda - k^2|  \geq |n^2 - k^2 |/4 \quad \text{for} \quad  k \in
n + 2\mathbb{Z}. $$ Therefore,  if $\lambda \in H_n \setminus D_n,
$  then (\ref{3.1}) implies that $$ \sum_{k \in n + 2\mathbb{Z}}
\frac{1}{|\lambda - k^2|} \leq \frac{2}{n/4} + \sum_{k\neq \pm n}
\frac{4}{|n^2 - k^2|} \leq \frac{8}{n} + \frac{8\log 6n}{n} \leq
C_1 \frac{\log n}{n}, $$ which proves the first inequality in
(\ref{1.12}). The second inequality in (\ref{1.12}) and the
inequalities in (\ref{1.14}) follow from Lemma~\ref{lem3.1} by the
same argument.

Next we estimate the Hilbert--Schmidt norm of the operator
$K_\lambda V K_\lambda $ for $bc = Per^\pm $ or $Dir,$  and
correspondingly, $ \lambda \in H_n \setminus D_n $  or $ \lambda
\in G_n \setminus D_n, \; n \in \mathbb{N}. $

For each $\ell^2$--sequence  $x=(x(j))_{j \in \mathbb{Z}} $ and
$m \in \mathbb{N}$
we set
\begin{equation}
\label{1.15}
\mathcal{E}_m (x) = \left (   \sum_{|j|\geq m} |x(j)|^2   \right )^{1/2}.
\end{equation}

\begin{Lemma}
\label{lem3.3} Let $v = Q^\prime, $ where $Q(x)  = \sum_{k \in
2\mathbb{Z} } q(k) e^{ikx} =\sum_{m=1}^\infty \tilde{q}(m) \sqrt{2}
\sin mx $ is a $\pi$--periodic  $L^2([0,\pi])$ function, and let $$
q = (q(k) )_{k \in 2\mathbb{Z}}, \quad \tilde{q} = (\tilde{q}
(m))_{m \in \mathbb{N}} $$ be the sequences of its Fourier
coefficients respect to the orthonormal bases $\{e^{ikx}, \; k \in
2\mathbb{Z} \}$ and $\{\sqrt{2} \sin mx, \; m \in \mathbb{N} \}.$
Then, for $n\geq 3,$
\begin{equation}
\label{1.17} \|K_\lambda VK_\lambda \|_{HS} \leq C \left (
\mathcal{E}_{\sqrt{n}} (q) + \|q\|/ \sqrt{n} \right ), \quad
\lambda \in H_n \setminus D_n, \;\; bc = Per^\pm,
\end{equation}
and
\begin{equation}
\label{1.18} \|K_\lambda VK_\lambda \|_{HS} \leq C \left (
\mathcal{E}_{\sqrt{n}} (\tilde{q}) + \|\tilde{q}\|/ \sqrt{n}
\right ), \quad \lambda \in G_n \setminus D_n,  \;\; bc = Dir,
\end{equation}
where $C$ is an absolute constant.
\end{Lemma}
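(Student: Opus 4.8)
The plan is to estimate the Hilbert--Schmidt norm of $K_\lambda V K_\lambda$ starting from the explicit bounds \eqref{3.30} in the periodic/antiperiodic case and \eqref{3.37} in the Dirichlet case, both of which have the shape
\[
\|K_\lambda V K_\lambda\|_{HS}^2 \leq \sum_{j,m}\frac{(j-m)^2\,|r(j-m)|^2}{|\lambda-j^2|\,|\lambda-m^2|},
\]
where $r$ is $q$ (over $2\mathbb{Z}$, or $n+2\mathbb{Z}$) in the first case and $\tilde r$ of \eqref{3.35} (over $\mathbb{Z}$) in the second. Setting $s=j-m$ and summing first in $j$ for fixed $s$, one gets $\sum_s s^2|r(s)|^2\,\sigma_n(s)$, where $\sigma_n(s)=\sup_{j}\sum_{m}\big(|\lambda-j^2||\lambda-m^2|\big)^{-1}$ restricted to $m=j-s$; by Cauchy--Schwarz on the two resolvent factors this is controlled by $\big(\sum_k |\lambda-k^2|^{-2}\big)$, which by \eqref{1.12}--\eqref{1.14} is $\leq C_1/n^2$ for $\lambda\in H_n\setminus D_n$ or $\lambda\in G_n\setminus D_n$ and $n\geq 3$. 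So the crude bound is $\|K_\lambda V K_\lambda\|_{HS}^2\leq (C_1/n^2)\sum_s s^2|r(s)|^2$, which is not good enough because it has $\|Q\|^2$-type weight $s^2|r(s)|^2$ rather than $|r(s)|^2$.

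The point is that the factor $(j-m)^2=s^2$ must be absorbed by gaining from the resolvents when $|s|$ is small, and estimated crudely only when $|s|$ is large. First I would split the sum at $|s|=\sqrt n$. For $|s|\geq\sqrt n$: bound $(j-m)^2/\big(|\lambda-j^2|\,|\lambda-m^2|\big)$ by noting that one of $|j|,|m|$ must be $\geq |s|/2$, so at least one resolvent factor, say $|\lambda-j^2|$, satisfies $|\lambda-j^2|\geq c\,|j|^2\geq c\,s^2/4$ once $|s|\geq\sqrt n$ and $\lambda\in H_n\setminus D_n$ (here one uses that $\lambda\in H_n$ forces $\mathrm{Re}\,\lambda\leq (n+1)^2$ while $j^2\geq s^2/4\gg n^2$, giving $|\lambda-j^2|\gtrsim j^2$); this kills the $s^2$ and leaves $\sum_{|s|\geq\sqrt n}|r(s)|^2\cdot\sum_k|\lambda-k^2|^{-1}\leq C\,\mathcal{E}_{\sqrt n}(r)^2\cdot(C_1\log n/n)$, i.e. a contribution $\leq C\,\mathcal{E}_{\sqrt n}(r)^2$ (the $\log n/n$ is $\leq 1$). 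For $|s|<\sqrt n$: now $s^2<n$, so $s^2\sum_k|\lambda-k^2|^{-2}\leq n\cdot C_1/n^2=C_1/n$, giving a contribution $\leq (C_1/n)\sum_{|s|<\sqrt n}|r(s)|^2\leq (C_1/n)\|r\|^2$. Adding the two pieces and taking square roots yields \eqref{1.17}, and the Dirichlet estimate \eqref{1.18} follows identically from \eqref{3.37} with $r=\tilde r$, using $\|\tilde r\|=\|\tilde q\|$ and $\mathcal{E}_{\sqrt n}(\tilde r)\leq\sqrt2\,\mathcal{E}_{\sqrt n}(\tilde q)$ by the symmetrization \eqref{3.35}.

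\textbf{The main obstacle} will be the bookkeeping in the regime $|s|\geq\sqrt n$: one needs a clean argument that, for $\lambda$ outside the disc $D_n$ but inside the strip $H_n$ (or $G_n$), whenever $|j|\geq|s|/2\geq\sqrt n/2$ one has $|\lambda-j^2|\geq c\max(j^2,|n^2-j^2|)$ with an absolute $c$, so that the $(j-m)^2$ factor is genuinely dominated. This requires separating the cases $j^2\gg \mathrm{Re}\,\lambda$ (where $|\lambda-j^2|\sim j^2$) from $j$ near $\pm n$ (excluded by the $D_n$ cut) — essentially the same geometry already used to derive \eqref{1.12} and \eqref{1.14}. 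Once that elementary inequality is in hand, everything reduces to the two sums in Lemma~\ref{lem3.1}/Lemma~\ref{lem3.2} and the splitting above, with no further subtlety; the constant $C$ is absolute because $C_1$ is.
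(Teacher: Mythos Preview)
Your two--piece split at $|s|=\sqrt n$ does not go through: the claim that for $|s|\geq\sqrt n$ one of the resolvent factors satisfies $|\lambda-j^2|\geq c\,s^2/4$ is false in the intermediate range $\sqrt n<|s|\lesssim n$. Indeed, from $|j|\geq|s|/2$ you only get $j^2\geq s^2/4$, and your parenthetical ``$j^2\geq s^2/4\gg n^2$'' is precisely what fails here --- for instance if $|s|=2n$ one may have $j=\pm n$, and then $|\lambda-j^2|$ is only $\geq n/4$ (the $D_n$ cut), while $s^2=4n^2$. So $s^2/|\lambda-j^2|$ is of order $n$, not bounded, and the remaining sum $\sum_m|\lambda-m^2|^{-1}\sim(\log n)/n$ does not save you. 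Your proposed resolution of the ``main obstacle'' (separate $j^2\gg\mathrm{Re}\,\lambda$ from $j$ near $\pm n$, the latter handled by the $D_n$ exclusion) does not work either: the $D_n$ cut controls only $j=\pm n$, not nearby lattice points, and in any case gives $|\lambda-j^2|\gtrsim n$, not $\gtrsim s^2$.

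The paper's fix is to split into \emph{three} ranges: $|s|\leq\sqrt n$, $\sqrt n<|s|\leq 4n$, and $|s|>4n$. Your small--$s$ argument is exactly the paper's $\Sigma_1$. Your large--$s$ argument (absorb $s^2$ into one resolvent via $|\lambda-j^2|\geq s^2/8$) is exactly the paper's $\Sigma_3$, and it is correct once $|s|>4n$ since then $j^2\geq s^2/4>4n^2>(n+1)^2$ genuinely forces $|\lambda-j^2|\geq j^2-(n+1)^2\geq s^2/8$. The missing middle range $\Sigma_2$ is handled by the \emph{same} Cauchy--Schwarz bound you used for $\Sigma_1$: the inner sum is $\leq s^2\sum_m|\lambda-m^2|^{-2}\leq 16n^2\cdot C_1/n^2=16C_1$, a bounded constant, and summing $|r(s)|^2$ over $|s|>\sqrt n$ gives $\mathcal E_{\sqrt n}(r)^2$. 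With this one extra line the proof is complete.
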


\begin{proof}

Fix $n \in \mathbb{N}.$
We prove only (\ref{1.17}) because,
in view of (\ref{3.35}) and (\ref{3.37}),
the proof of (\ref{1.18}) is practically the same
(the only difference is that the summation indices
will run in $\mathbb{Z}$).

By (\ref{3.30}),
\begin{equation}
\label{1.21} \|KVK\|^2_{HS} \leq \sum_s \left ( \sum_m
\frac{s^2}{|\lambda -m^2||\lambda - (m+s)^2|} \right )  |q(s)|^2 =
\Sigma_1 + \Sigma_2 + \Sigma_3,
\end{equation}
where $s \in  2\mathbb{Z}, \;  m \in n + 2\mathbb{Z} $ and
\begin{equation}
\label{1.22}
\Sigma_1 = \sum_{|s| \leq \sqrt{n}} \cdots, \quad
\Sigma_2 = \sum_{\sqrt{n} < |s| \leq 4n} \cdots, \quad
\Sigma_3 = \sum_{|s| > 4n} \cdots.
\end{equation}
The Cauchy inequality implies that
\begin{equation}
\label{1.23} \sum_{m\in n + 2\mathbb{Z}} \frac{1}{|\lambda
-m^2||\lambda - (m+s)^2|} \leq \sum_{m\in n + 2\mathbb{Z}}
\frac{1}{|\lambda -m^2|^2}.
\end{equation}
Thus, by (\ref{1.14}) and (\ref{1.15}),
\begin{equation}
\label{1.24} \Sigma_1 \leq \sum_{|s|\leq \sqrt{n}} |q(s)|^2  s^2
\frac{C_1}{n^2} \leq (\sqrt{n})^2 \frac{C_1}{n^2} \|q\|^2 =
\frac{C_1}{n}\|q\|^2, \quad \lambda \in H_n \setminus D_n,
\end{equation}
\begin{equation}
\label{1.25}
\Sigma_2 \leq  (4n)^2 \frac{C_1}{n^2}
 \sum_{|s| > \sqrt{n}} |q(s)|^2
= 16 C_1 \left ( \mathcal{E}_{\sqrt{n}}(q) \right )^2, \quad
\lambda \in H_n \setminus D_n.
\end{equation}

Next we estimate $\Sigma_3 $ for $n \geq 3.$
First we show that if
$|s| > 4n $ then
\begin{equation}
\label{1.26} \sum_m \frac{s^2}{|\lambda -m^2||\lambda - (m+s)^2|}
\leq 16 \frac{C_1 \log n}{n}, \quad \lambda \in H_n \setminus D_n.
\end{equation}
Indeed, if $ |m| \geq |s|/2,$ then  (since $ |s|/4 >n \geq 3 $)
$$|\lambda - m^2| \geq m^2 - |Re \, \lambda | \geq  s^2/4 -
(n+1)^2
> s^2/4 - (|s|/4 +1)^2 \geq s^2/8. $$ Thus, by (\ref{1.12}), $$
\sum_{|m|\geq |s|/2} \frac{s^2}{|\lambda -m^2||\lambda - (m+s)^2|}
\leq \sum_m \frac{8}{|\lambda - (m+s)^2|} \leq 8\frac{C_1 \log
n}{n} $$ for $ \lambda \in H_n \setminus D_n. $ If $|m| < |s|/2,$
then $|m+s| > |s|- |s|/2 = |s|/2 ,$ and therefore, $$ |\lambda
-(m+s)^2| \geq (m+s)^2 - |Re\, \lambda | \geq s^2/4 - (n+1)^2 \geq
s^2/8. $$ Therefore, by (\ref{1.12}), $$ \sum_{|m|<|s|/2}
\frac{s^2}{|\lambda -m^2||\lambda - (m+s)^2|} \leq \sum_m
\frac{8}{|\lambda - m^2|} \leq 8\frac{C_1 \log n}{n} $$ for $
\lambda \in H_n \setminus D_n, $ which proves (\ref{1.26}).

Now, by (\ref{1.26}),
\begin{equation}
\label{1.28} \Sigma_3 \leq 16\frac{C_1 \log n}{n} \sum_{|s| \geq
4n} |q(s)|^2 = 16\frac{C_1 \log n}{n} \left (\mathcal{E}_{4n} (q)
\right )^2.
\end{equation}

Finally, (\ref{1.21}), (\ref{1.24}), (\ref{1.25}) and (\ref{1.28})
imply (\ref{1.17}).

\end{proof}

Let  $H^N $ denote the half--plane
\begin{equation}
\label{1.30} H^N = \{ \lambda \in \mathbb{C}: \;\;  Re\, \lambda <
N^2+N \}, \qquad N \in \mathbb{N},
\end{equation}
and let $R_N $ be the rectangle
\begin{equation}
\label{1.31} R_N = \{\lambda \in \mathbb{C}: \;\; -N < Re\,
\lambda < N^2 +N, \quad |Im \lambda | < N  \}.
\end{equation}

\begin{Lemma}
\label{lem3.4} In the above notations, for $bc= Per^\pm $ or
$Dir,$ we have
\begin{equation}
\label{1.32} \sup \left \{ \|K_\lambda V K_\lambda \|_{HS}, \;\;
\lambda \in H^N \setminus R_N \right \} \leq C \left ( \frac{(\log
N)^{1/2} }{N^{1/4}} \|q\| + \mathcal{E}_{4\sqrt{N}} (q) \right ),
\end{equation}
where $C$ is an absolute constant, and $q$ is replaced by
$\tilde{q}$ if $bc = Dir. $
\end{Lemma}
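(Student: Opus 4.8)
The plan is to reduce the estimate over the unbounded region $H^N\setminus R_N$ to the strip‑wise estimates already established in Lemma~\ref{lem3.3}, by covering $H^N\setminus R_N$ with the horizontal strips $H_n$ (for $bc=Per^\pm$) or $G_n$ (for $bc=Dir$) together with the parts of those strips that fall outside the discs $D_n$. First I would observe that for $n\le N$ every strip $H_n$ (resp. $G_n$) lies inside $H^N$, and that a point $\lambda\in H^N\setminus R_N$ with $\operatorname{Re}\lambda$ in the range governed by $H_n$ automatically satisfies $\lambda\notin D_n$: indeed $D_n\subset\{|\operatorname{Im}\lambda|<n/4\}$, so once $n\le N$ and $|\operatorname{Im}\lambda|\ge N$ we are outside $D_n$; and for the bounded‑imaginary‑part part of $H^N\setminus R_N$, namely $\operatorname{Re}\lambda\le -N$, one checks directly that $\lambda$ is far from every $k^2$. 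Thus $H^N\setminus R_N$ is contained in the union $\bigcup_{n}\big(H_n\setminus D_n\big)$ (resp. $\bigcup_n(G_n\setminus D_n)$) over the relevant range of $n$, and on each piece Lemma~\ref{lem3.3} applies verbatim once $n\ge 3$.

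The second step is to take the supremum of the bounds $\mathcal{E}_{\sqrt n}(q)+\|q\|/\sqrt n$ (resp. with $\tilde q$) over all $n$ that actually occur. The key point is a monotonicity/worst‑case argument: the tail functional $\mathcal{E}_m(q)$ is nonincreasing in $m$, and $1/\sqrt n$ is decreasing in $n$, so among the strips that meet $H^N\setminus R_N$ the dominant contribution comes from the \emph{smallest} admissible index. Because the rectangle $R_N$ has been removed, the smallest strip index $n$ that can contribute is of order $\sqrt N$: a point with $\operatorname{Re}\lambda$ small (say in $H_n$ with $n$ of moderate size) but $|\operatorname{Im}\lambda|\ge N$ forces $|\lambda-k^2|\ge |\operatorname{Im}\lambda|\ge N$ for all $k$, so the Hilbert–Schmidt sum (\ref{3.30}) or (\ref{3.37}) is bounded by $\sum_s s^2|q(s)|^2\cdot\big(\sum_m |\lambda-m^2|^{-1}\big)\big(\sum_m|\lambda-m^2|^{-1}\big)$‑type quantities that are $O(\|q\|^2\,(\log N)/N^{1/2})$ by a direct computation analogous to (\ref{1.26}); this already gives the $\tfrac{(\log N)^{1/2}}{N^{1/4}}\|q\|$ term. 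For the strips with $n\gtrsim\sqrt N$ one simply invokes (\ref{1.17})/(\ref{1.18}) and uses $\mathcal{E}_{\sqrt n}(q)\le\mathcal{E}_{\sqrt{\sqrt N}}(q)=\mathcal{E}_{N^{1/4}}(q)$ and $\|q\|/\sqrt n\le\|q\|/N^{1/4}$; adjusting constants and replacing $N^{1/4}$ by $4\sqrt N$ in the tail index (which only makes $\mathcal{E}$ smaller is false — rather one keeps the weaker tail $\mathcal{E}_{4\sqrt N}$, which is what the statement claims, and which is dominated by $\mathcal{E}_{N^{1/4}}$ only after re‑examining; in fact the correct bookkeeping gives tail index comparable to $\sqrt{\,\cdot\,}$ of the strip index, and the smallest strip index is $\asymp\sqrt N$, so the tail index is $\asymp N^{1/4}$, and $\mathcal{E}_{4\sqrt N}\le\mathcal{E}_{N^{1/4}}$ for $N$ large — so the stated bound with $\mathcal{E}_{4\sqrt N}$ is weaker and hence also valid). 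I would present this cleanly by just bounding $\mathcal{E}_{\sqrt n}(q)\le\mathcal{E}_{4\sqrt N}(q)$ for the relevant $n$, noting this holds because those $n$ satisfy $\sqrt n\le 4\sqrt N$ trivially while for the genuinely large strips $\mathcal{E}$ is even smaller.

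The handful of remaining small indices $n\le 2$ (where Lemma~\ref{lem3.3} is not stated) is handled separately: the corresponding portion of $H^N\setminus R_N$ is covered by the requirement $|\operatorname{Im}\lambda|\ge N$ or $\operatorname{Re}\lambda\le -N$, on which $|\lambda-k^2|\ge N$ uniformly, so the Hilbert–Schmidt norm there is $O(\|q\|\,(\log N)^{1/2}/N^{1/4})$ by the same crude estimate as above. Assembling the three contributions — the low‑$n$/far‑from‑real region, the transition strips near $n\asymp\sqrt N$, and the high‑$n$ strips — yields (\ref{1.32}). The Dirichlet case is identical after replacing $q$ by $\tilde q$ and letting the summation indices run over $\mathbb{Z}$ via (\ref{3.35})–(\ref{3.37}), exactly as in the proof of Lemma~\ref{lem3.3}. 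The main obstacle, and the step requiring care rather than routine work, is the bookkeeping in the second paragraph: verifying that after excising $R_N$ the smallest relevant strip index is of order $\sqrt N$, so that the $\|q\|$‑coefficient degrades only to $(\log N)^{1/2}N^{-1/4}$ and the tail starts no earlier than $\asymp N^{1/4}$ (hence a fortiori the stated $\mathcal{E}_{4\sqrt N}$ bound holds); everything else is a direct consequence of Lemma~\ref{lem3.3} and Lemma~\ref{lem3.1}.
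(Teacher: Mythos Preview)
Your reduction to Lemma~\ref{lem3.3} cannot produce the bound stated in Lemma~\ref{lem3.4}, and the argument contains a concrete error about the geometry and a sign error in the final comparison of tails.

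First, the claim that ``the smallest strip index $n$ that can contribute is of order $\sqrt N$'' is false. The region $H^N\setminus R_N$ contains, for instance, the point $\lambda=4+iN$, which lies in $H_2$; more generally, every strip $H_n$ with $0\le n\le N$ meets $H^N\setminus R_N$ along the line $|\operatorname{Im}\lambda|=N$. So the strip index ranges over \emph{all} $n\le N$, not just $n\gtrsim\sqrt N$. For the small-$n$ strips, Lemma~\ref{lem3.3} gives only $\mathcal{E}_{\sqrt n}(q)+\|q\|/\sqrt n$, which is of order $\|q\|$ when $n=3$ and carries no decay in $N$ whatsoever; the bound in Lemma~\ref{lem3.3} does not see $\operatorname{Im}\lambda$ at all.

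Second, even granting your split into ``low $n$'' (handled directly) and ``$n\gtrsim\sqrt N$'' (handled by Lemma~\ref{lem3.3}), the latter yields $\mathcal{E}_{\sqrt n}(q)\le\mathcal{E}_{N^{1/4}}(q)$, not $\mathcal{E}_{4\sqrt N}(q)$. Since $N^{1/4}\le 4\sqrt N$ and $\mathcal{E}_m$ is nonincreasing in $m$, one has $\mathcal{E}_{4\sqrt N}(q)\le\mathcal{E}_{N^{1/4}}(q)$; thus the right-hand side you obtain is \emph{larger} than the right-hand side of (\ref{1.32}), and your inequality does not imply the lemma as stated. Your parenthetical reasoning that ``the stated bound with $\mathcal{E}_{4\sqrt N}$ is weaker and hence also valid'' has the inequality reversed.

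The paper does not reduce to Lemma~\ref{lem3.3}. Instead it exploits $|\operatorname{Im}\lambda|\ge N$ directly: for $\lambda=x+iy$ with $|y|\ge N$ and $x\in G_n$, it proves the pointwise comparison
\[
|x-m^2|+|y|\;\ge\;\tfrac14\bigl(|n^2-m^2|+N\bigr),
\]
so that the Hilbert--Schmidt sum is dominated by $\sum_s\sigma(n^2,N;s)|r(s)|^2$ with $\sigma(n^2,N;s)=\sum_m\frac{2s^2}{(|n^2-m^2|+N)(|n^2-(m+s)^2|+N)}$. The point is that the parameter $N$ now appears explicitly in the denominators, and Lemma~\ref{lem3.2} (formulas (\ref{3.3}) and (\ref{3.4}), which involve the auxiliary parameter $b$) gives $\sigma(n^2,N;s)\le 32C/\sqrt N$ for $|s|\le4\sqrt N$ and $\sigma(n^2,N;s)\le 32C$ for $4\sqrt N<|s|\le4n$, uniformly in $n\le N$. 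This is what produces the cutoff at $4\sqrt N$ rather than at $N^{1/4}$. The case $\operatorname{Re}\lambda\le -N$ is handled by a separate elementary estimate. Lemma~\ref{lem3.3} and Lemma~\ref{lem3.1} are not used at all; the essential input is Lemma~\ref{lem3.2}.
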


\begin{proof} Consider the sequence $r= (r(s))_{s \in \mathbb{Z}},$
defined by
\begin{equation}
\label{1.33} r(s) = \begin{cases} 0 & \text{for odd} \; s,  \\
\max(|q(s)|, |q(-s)|) & \text{for even} \; s,
\end{cases}.
\end{equation}
Then, in view
of (\ref{1.33}),  we have $r \in \ell^2 (\mathbb{Z})$ and $\|r\|
\leq 2\|q\|.$

If $bc = Per^\pm, $ then we have,
by (\ref{3.30}),
\begin{equation}
\label{1.34} \|KVK\|_{HS}^2 \leq \sum_{j,m \in \mathbb{Z}}
\frac{(j-m)^2|r(j-m)|^2} {|\lambda - j^2| |\lambda - m^2|}.
\end{equation}
On the other hand, if $bc = Dir,$ then (\ref{3.37}) gives the same
estimate for  $\|KVK\|_{HS}^2$ but with $r$ replaced by the
sequence $\tilde{r} \in (\ref{3.35}).$ So, to prove (\ref{1.32}),
it is enough to estimate the right side of (\ref{1.34}) for
$\lambda \in H^N \setminus R_N \}.$

If $Re\,\lambda  \leq -N,$ then (\ref{1.34}) implies that $$
\|KVK\|_{HS}^2 \leq \sum_{j,m \in \mathbb{Z}}
\frac{(j-m)^2|r(j-m)|^2} {|N + j^2| |N + m^2|}. $$ On the other
hand, for $b\geq 1, $ the following estimate holds:
\begin{equation}
\label{3.43}
\sum_{j,m \in \mathbb{Z}}
\frac{(j-m)^2|r(j-m)|^2} {|b^2 + j^2| |b^2 + m^2|}
\leq 4 \|r\|^2 \frac{1+\pi}{b}.
\end{equation}
Indeed, the left--hand side of (\ref{3.43}) does not exceed
$$
 \sum_{j,m \in \mathbb{Z}} \frac{2(j^2+m^2)|r(j-m)|^2} {|b^2 + j^2|
|b^2 + m^2|}$$   $$ \leq 2\sum_m \frac{1}{|b^2 + m^2|} \sum_j
|r(j-m)|^2 +2\sum_j \frac{1}{|b^2 + j^2|}
\sum_m |r(j-m)|^2 $$  $$
\leq 4 \|r\|^2 \left ( \frac{1}{b^2} + 2 \int_0^\infty
\frac{1}{b^2+x^2} dx \right ) =4 \|r\|^2 \left ( \frac{1}{b^2} +
\frac{\pi}{b} \right ) \leq 4 \|r\|^2 \frac{1+\pi}{b}.            $$
Now, with $b= \sqrt{N},$ (\ref{3.43}) yields
\begin{equation}
\label{1.35} \|KVK\|_{HS}^2 \leq C \frac{\|r\|^2}{\sqrt{N}} \quad
\text{if}
 \;\; Re\,\lambda  \leq -N,
\end{equation}
where $C$ is an absolute constant.

By (\ref{1.34})  and
the elementary inequality
$$|\lambda - m^2| =
\sqrt{(x-m^2)^2 + y^2} \geq (|x-m^2|+|y|)/\sqrt{2},
\quad \lambda = x+iy,$$
we have
\begin{equation}
\label{1.36} \|K_\lambda V K_\lambda \|^2_{HS} \leq \sum_{s \in
\mathbb{Z}} \sigma (x,y;s) |r(s)|^2,
\end{equation}
where
\begin{equation}
\label{1.37} \sigma (x,y;s) = \sum_{m\in \mathbb{Z}}
\frac{2s^2}{(|x-m^2|+|y|)(|x-(m+s)^2|+|y|)}.
\end{equation}

Now, suppose that $ \lambda = x+iy \in H^N \setminus R_N $ and
$|y| \geq N. $ By (\ref{1.8}) and (\ref{1.30}), $$ H^N \subset
\bigcup_{1 \leq n \leq N} G_n,   $$ so $\lambda \in G_n $ for some
$n \leq N.$ Moreover,
\begin{equation}
\label{1.38} \sigma (x,y;s) \leq 16 \sigma (n^2,N;s) \quad
\text{if} \quad \lambda \in G_n, \; \; |y| \geq N.
\end{equation}
Indeed, then one can easily see that $$ |x-m^2|
+|y| \geq \frac{1}{4}(|n^2-m^2| + N), \quad m \in \mathbb{Z}, $$
which implies (\ref{1.38}).

By (\ref{1.37}) and (\ref{1.38}), if $ \lambda = x+iy \in G_n
\setminus R_N  $  and $ |y| \geq N,$ then
\begin{equation}
\label{1.39}
 \|K_\lambda V K_\lambda \|^2_{HS}
\leq \sum_s \sigma (n^2,N;s) |r(s)|^2 \leq  \Sigma_1 + \Sigma_2 +
\Sigma_3,
\end{equation}
where $$ \Sigma_1 =\sum_{|s| \leq 4\sqrt{N}} \sigma (n^2,N;s)
|r(s)|^2, \quad
 \Sigma_2 = \sum_{4\sqrt{N} < |s| \leq 4n}\cdots,
 \quad  \Sigma_3 = \sum_{|s| > 4n} \cdots.
 $$

If $|s| \leq 4\sqrt{N}, $ then the Cauchy inequality and
(\ref{3.4}) imply that $$ \sigma (n^2,N;s) \leq 32N \cdot \sum_{m}
\frac{1}{|n^2-m^2|^2+N^2} \leq 32N \frac{C}{N(n^4 +N^2)^{1/4}}
\leq \frac{32C}{\sqrt{N}}. $$ Thus
\begin{equation}
\label{1.41} \Sigma_1 \leq  \frac{32C}{\sqrt{N}} \|r\|^2.
\end{equation}

If $4\sqrt{N} < |s| \leq 4n $ then the Cauchy inequality and
(\ref{3.4}) yield $$ \sigma (n^2,N;s) \leq 32n^2 \cdot \sum_{m}
\frac{1}{|n^2-m^2|^2+N^2} \leq 32n^2 \frac{C}{N(n^4 +N^2)^{1/4}}
\leq 32C $$ because $n \leq N. $ Thus
\begin{equation}
\label{1.42} \Sigma_2 \leq  32C \cdot \left (
\mathcal{E}_{4\sqrt{N}}(r)\right )^2.
\end{equation}

Let $ |s| > 4n.$ If $|m| < |s|/2 $ then $ |m+s| \geq |s|/2,$ and
therefore, $$ |n^2 - (m+s)^2| \geq |m+s|^2 - n^2 \geq (|s|/2)^2 -
(|s|/4)^2 \geq s^2/8. $$ Thus, by (\ref{3.3}), $$ \sum_{|m| <
|s|/2} \frac{s^2}{(|n^2-m^2|+N)(|n^2 - (m+s)^2|+N)} \leq  \sum_m
\frac{8}{|n^2-m^2|+N} \leq 8C \frac{\log N}{\sqrt{N}}. $$ If $|m|
\geq |s|/2, $  then we have the same estimate because $ m^2 - n^2
\geq (|s|/2)^2 - (|s|/4)^2 \geq s^2/8, $ and therefore, again by
(\ref{3.3}), $$ \sum_{|m| \geq |s|/2}
\frac{s^2}{(|n^2-m^2|+N)(|n^2 - (m+s)^2|+N)} \leq  \sum_m
\frac{8}{|n^2-(m+s)^2|+N} \leq 8C \frac{\log N}{\sqrt{N}}. $$ Thus
$ \sigma (n^2,N;s) \leq 32C  ( \log N)/\sqrt{N}, $ so we have
\begin{equation}
\label{1.43} \Sigma_3 \leq 32C \|r\|^2 \frac{\log N}{\sqrt{N}}.
\end{equation}
Now, in view of (\ref{1.33}) and (\ref{3.35}), the estimates
(\ref{1.35}) and (\ref{1.41})--(\ref{1.43}) yield (\ref{1.32}),
which completes the proof.
\end{proof}

\begin{Theorem}
\label{thm3.1} For each periodic potential $v \in H_{loc}^{-1}
(\mathbb{R}), $ the spectrum of the operators
$L_{bc} (v) $  with $bc = Per^\pm,  \, Dir $
is discrete. Moreover,
if $bc = Per^\pm $ then, respectively,
for each large enough even
number
 $N^+ > 0 $    or odd number $N^-$, we have
\begin{equation}
\label{3.51} Sp \left ( L_{Per^\pm} \right ) \subset R_{N^\pm}
\cup \bigcup_{n \in N^\pm  + 2\mathbb{N}} D_n,
\end{equation}
where $R_N $ is the rectangle (\ref{1.31}),  $D_n = \{\lambda:\;
|\lambda - n^2 | < n/4 \}, $  and
\begin{equation}
\label{3.52} \# \left ( Sp \left ( L_{Per^\pm} \right ) \cap
R_{N^\pm} \right ) =
\begin{cases}
2N^+ +1    \\  2N^-
\end{cases},
\quad
\# \left ( Sp \left ( L_{Per^\pm} \right ) \cap D_n \right ) = 2
\;\;\text{for} \;\; n \in N^\pm  + 2\mathbb{N},
\end{equation}
where each eigenvalue is counted with its algebraic multiplicity.

If $bc = Dir$ then,
for each large enough number
 $N \in \mathbb{N}, $  we have
\begin{equation}
\label{3.53} Sp \left ( L_{Dir} \right ) \subset R_{N} \cup
\bigcup_{n =N+1}^\infty D_n
\end{equation}
and
\begin{equation}
\label{3.54} \# \left ( Sp \left ( L_{Dir} \right ) \cap R_N
\right ) = N+1, \quad \# \left ( Sp \left ( L_{Dir}) \right ) \cap
D_n \right ) = 1 \;\;\text{for} \;\; n > N.
\end{equation}
\end{Theorem}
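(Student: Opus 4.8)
The plan is to run the standard Fourier-method / resolvent-perturbation scheme, which is now available because Theorems~\ref{thm001} and \ref{thm2.2} provide the representations $L_{bc}=L^0+V$ in (\ref{3.5})--(\ref{3.10}). Discreteness of the spectra is already contained in Theorem~\ref{thm2}(c) and Theorem~\ref{thm2.1}(c), so the real content is the localization (\ref{3.51}), (\ref{3.53}) and the counts (\ref{3.52}), (\ref{3.54}). The workhorse is the series (\ref{3.23}),
$$R_\lambda=(\lambda-L)^{-1}=(K_\lambda)^2+K_\lambda(K_\lambda VK_\lambda)\bigl(I-K_\lambda VK_\lambda\bigr)^{-1}K_\lambda ,$$
which is valid, and hence shows $\lambda\notin Sp(L_{bc})$, as soon as $\|K_\lambda VK_\lambda\|<1$. (The diagonal part $C\cdot\mathrm{Id}$ of $V$ coming from $V(0)=C$ contributes at most $|C|\sup_k|\lambda-k^2|^{-1}$ to this norm, which is $O(N^{-1})$ on every contour below, so I suppress it; recall also $R^0_\lambda=(K_\lambda)^2$ from (\ref{3.11}).)

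First I would establish the localization. Since $q\in\ell^2(2\mathbb{Z})$ (resp. $\tilde q\in\ell^2(\mathbb{N})$ when $bc=Dir$), the right-hand sides of (\ref{1.17}), (\ref{1.18}) and (\ref{1.32}) tend to $0$; therefore Lemma~\ref{lem3.3} and Lemma~\ref{lem3.4} give an $N_0$ with the property that for every $N\geq N_0$ (even if $bc=Per^+$, odd if $bc=Per^-$) one has $\|K_\lambda VK_\lambda\|_{HS}<1$ on $H^N\setminus R_N$ and on every $H_n\setminus D_n$ (resp. $G_n\setminus D_n$) with $n\geq N$. Since $\mathbb{C}=H^N\cup\bigcup_{n\geq N}H_n$ (resp. $=H^N\cup\bigcup_{n\geq N}G_n$), every $\lambda$ outside $R_N\cup\bigcup_{n>N}D_n$ lies in $H^N\setminus R_N$ or in some $H_n\setminus D_n$, $n\geq N$; on all of these $\|K_\lambda VK_\lambda\|<1$, hence $\lambda\notin Sp(L_{bc})$, which is (\ref{3.51}) and (\ref{3.53}).

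Next, for the eigenvalue count I would use a homotopy in place of a direct projection estimate. Fix $N\geq N_0$ and let $\Omega$ be $R_N$ or a disc $D_n$ with $n>N$. One checks that $\partial\Omega$ lies inside the region where $\|K_\lambda VK_\lambda\|\leq\theta<1$: the left, top and bottom sides of $\partial R_N$ sit in $H^N\setminus R_N$, its right side $\{Re\,\lambda=N^2+N\}$ in $H_N\setminus D_N$ (resp. $G_N\setminus D_N$), and $\partial D_n\subset H_n\setminus D_n$; moreover $\|K_\lambda\|\leq N^{-1/2}$ on $\partial R_N$ (there $|\lambda-k^2|\geq N$ for all $k\in\Gamma_{bc}$) and $\|K_\lambda\|\leq 2n^{-1/2}$ on $\partial D_n$. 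Then, for $t\in[0,1]$, the operator $L^0+tV$ has resolvent $R^{(t)}_\lambda=(K_\lambda)^2+\sum_{m\geq1}t^m K_\lambda(K_\lambda VK_\lambda)^mK_\lambda$ on $\partial\Omega$, the series converging uniformly in $(t,\lambda)$, so the Riesz projections $P^{(t)}_\Omega=\tfrac{1}{2\pi i}\int_{\partial\Omega}R^{(t)}_\lambda\,d\lambda$ form a norm-continuous family of idempotents and $\operatorname{rank}P^{(t)}_\Omega$ is independent of $t$. This rank equals $\#\bigl(Sp(L_{bc})\cap\Omega\bigr)$ (with algebraic multiplicities) at $t=1$ and $\#\{k\in\Gamma_{bc}:k^2\in\Omega\}$ at $t=0$; evaluating the latter for $\Omega=R_N$ and $\Omega=D_n$ from the description of $Sp(L^0_{bc})$ at the start of Section~3 gives (\ref{3.52}) and (\ref{3.54}).

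The hard part is really the count in the big rectangle $R_N$: one cannot simply bound $\|P_{R_N}-P^0_{R_N}\|<1$, since the perimeter of $R_N$ grows like $N^2$ whereas $\|R_\lambda-R^0_\lambda\|$ on $\partial R_N$ is only of order $N^{-5/4}(\log N)^{1/2}$. Switching from the comparison argument to the $t$-homotopy $L^0+tV$ removes this difficulty, because there only norm-continuity --- not norm-smallness --- of $t\mapsto P^{(t)}_{R_N}$ is needed. The remaining issue is the bookkeeping that fits each piece of $\partial R_N$ and each circle $\partial D_n$ into the scope of Lemma~\ref{lem3.3} or Lemma~\ref{lem3.4}; given the definitions (\ref{1.6})--(\ref{1.8}), (\ref{1.10}), (\ref{1.30}), (\ref{1.31}), this is straightforward.
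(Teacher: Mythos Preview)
Your proposal is correct and follows essentially the same route as the paper: localization via $\|K_\lambda VK_\lambda\|<1$ from Lemmas~\ref{lem3.3} and \ref{lem3.4}, then the eigenvalue count by the homotopy $t\mapsto L^0+tV$ and continuity of the Riesz projections, which is exactly the paper's ``continuous parametrization'' $v_\tau=\tau v$. You supply more of the contour bookkeeping (placing the sides of $\partial R_N$ in $H^N\setminus R_N$ or $H_N\setminus D_N$) and make explicit why the homotopy is preferred over a direct $\|P-P^0\|<1$ estimate, but the argument is the same.
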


\begin{proof}
In view of (\ref{3.23}), the resolvent $R_\lambda $ is well
defined if $\|KVK\| <1.$ Therefore, (\ref{3.51}) and (\ref{3.53})
follow from Lemmas \ref{lem3.3} and \ref{lem3.4}.

To prove (\ref{3.52}) and (\ref{3.54}) we use a standard method of
continuous parametrization. Let us consider the one--parameter
family of potentials  $v_\tau (x) = \tau v(x), \; \tau \in [0,1]. $
Then, in the notation of Lemma~\ref{lem3.3}, we have $v_\tau = \tau
\cdot Q^\prime, $ and the assertions of Lemmas \ref{lem3.3} and
\ref{lem3.4} hold with $q $ and $\tilde{q} $ replaced, respectively,
by $ \tau \cdot q $ and $\tau \cdot \tilde{q}.$ Therefore,
(\ref{3.51}) and (\ref{3.53}) hold, with
 $ L_{bc} =L_{bc} (v) $ replaced by $L_{bc} (v_\tau). $
Moreover, the corresponding resolvents $R_\lambda (L_{bc}
(v_\tau)) $ are analytic in $\lambda $ and continuous in $\tau. $

Now, let us prove the first formula in (\ref{3.52}) in the case
$bc= Per^+. $ Fix an even $N^+ \in \mathbb{N} $ so that
(\ref{3.51}) holds, and consider the projection
\begin{equation}
\label{3.56} P^N (\tau) = \frac{1}{2 \pi i} \int_{\lambda \in
\partial R_N} \left (\lambda-L_{Per^+} (v_\tau) \right )^{-1}
d\lambda.
\end{equation}
The dimension $\dim \left (P^N (\tau) \right ) $ gives the number
of eigenvalues inside the rectangle $R_N.$ Being an integer, it is
a constant, so, by the relation (a) at the begging of Section~3,
we have
$$ \dim P^N (1) = \dim P^N (0) =2N^+ +1.$$ In view of the
relations  (a)--(c) at the begging of Section~3, the same argument
shows that (\ref{3.52}) and (\ref{3.54}) hold in all cases.
\end{proof}

{\em Remark.} It is possible to choose the disks $D_n =
\{\lambda:\; |\lambda - n^2 | < r_n \} $ in Lemma~\ref{lem3.3} so
that $ r_n/n \to 0.  $ Indeed, if we take $r_n = n /\varphi (n),$
where $\varphi (n) \to \infty $ but $\varphi (n)/\sqrt{n} \to 0 $
and $\varphi (n) \mathcal{E}_{\sqrt{n}} (W) \to 0, $ then,
modifying the proof of Lemma~\ref{lem3.3}, one can get that
$\|K_\lambda VK_\lambda \|_{HS}  \to 0 $ as $n \to \infty.$
Therefore, Theorem~\ref{thm3.1} could be sharpen: {\em for large
enough $N^\pm $ and $N$, (\ref{3.51})--(\ref{3.54})  hold with
$D_n = \{\lambda:\; |\lambda - n^2 | < r_n \} $ for some sequence
$\{r_n\} $ such that} $ r_n/n \to 0.$

\section{Conclusion}

The main goal of our paper was to bring into the framework of
Fourier method the analysis of Hill--Schr\"odinger operators with
periodic $H^{-1}_{loc} (\mathbb{R})$ potential,
 considered with
periodic, antiperiodic and Dirichlet boundary conditions. As soon
as this is done we can apply the methodology developed in
\cite{KM2,DM3,DM5} (see a detailed exposition in \cite{DM15}) to
study the relationship between smoothness of a potential $v$ and
rates of decay of spectral gaps $ \gamma_n = \lambda^+_n -
\lambda^-_n $ and deviations $\delta_n $ under a weak a priori
assumption $ v \in H^{-1}.$ (In \cite{KM2,DM3,DM5, DM15} the basic
assumption is $ v \in L^2([0,\pi]).$) Still, there is a lot of
technical problems; we present all the details elsewhere. But now
let us give these results as stronger versions of Theorems 54 and
67 in \cite{DM15}.

\begin{Theorem}
\label{thm33.1} Let $L = L^0 + v(x) $ be a Hill--Schr\"odinger
operator with a real--valued $\pi$--periodic potential
$v \in H^{-1}_{loc} (\mathbb{R}) ,$  and
let $\gamma = (\gamma_n)$ be its gap sequence. If $\omega =
(\omega (n))_{n\in \mathbb{Z}} $ is a sub--multiplicative weight
such that
\begin{equation}
\label{33.001}  \frac{\log \omega (n)}{n} \searrow 0 \quad
\text{as} \quad n \to \infty,
\end{equation}
then, with
$$ \Omega = (\Omega (n)), \quad  \Omega (n)=
\frac{\omega (n)}{n}, $$  we have
\begin{equation}
\label{33.002} \gamma \in \ell^2 (\mathbb{N}, \Omega) \Rightarrow
v \in H(\Omega ).
\end{equation}
If $\Omega $ is a sub--multiplicative weight of exponential type,
i.e.,
\begin{equation}
\label{33.1a} \lim_{n\to \infty} \frac{\log \Omega (n)}{n} >0,
\end{equation}
 then there exists $\varepsilon >0 $ such that
\begin{equation}
\label{33.003} \gamma \in \ell^2 (\mathbb{N}, \Omega) \Rightarrow
v \in H(e^{\varepsilon |n|} ).
\end{equation}
\end{Theorem}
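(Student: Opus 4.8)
The plan is to transport the problem to the Fourier side via Theorem~\ref{thm001} and Theorem~\ref{thm2.2}, and then to run, essentially verbatim, the machinery of \cite{KM2,DM3,DM5} exposed in \cite{DM15}, the only new ingredient being the bookkeeping of the extra factor $j-m$ occurring in the matrices (\ref{3.29})--(\ref{3.32}) of the $H^{-1}$ case. Fix $bc = Per^+$ for even $n$ and $bc = Per^-$ for odd $n$, write $L_{Per^\pm} = L^0 + V$ as in (\ref{3.5}), and use Theorem~\ref{thm3.1} (together with the sharpening in the Remark following it) to localize, for all large $n$, the two eigenvalues $\lambda_n^\pm$ of $L_{Per^\pm}$ inside a disc $D_n = \{\lambda : |\lambda - n^2| < r_n\}$ with $r_n/n \to 0$. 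On such a disc, the Lyapunov--Schmidt reduction onto the coordinate plane $\mathrm{span}\{e^{inx},e^{-inx}\}$, carried out through the resolvent representation (\ref{3.23}) with $K_\lambda^2 = R^0_\lambda$, converts the eigenvalue equation into a scalar quadratic
\[
\left(\lambda - n^2 - \alpha_n(\lambda)\right)^2 = \beta_n^+(\lambda)\,\beta_n^-(\lambda),
\]
where $\alpha_n$ and $\beta_n^\pm$ are the entries of $V + VR^0_\lambda V + \cdots$ in which the $\pm n$ rows and columns have been deleted from the interior resolvents. A standard argument (as in \cite{DM15}), exploiting the Lipschitz-smallness in $\lambda$ of $\alpha_n,\beta_n^\pm$ on $D_n$, then gives $\gamma_n = \lambda_n^+ - \lambda_n^- = 2\sqrt{\beta_n^+(\lambda_n^*)\,\beta_n^-(\lambda_n^*)}$ for some $\lambda_n^* \in D_n$; since for real $v$ (hence real $Q$) one has $\overline{\beta_n^+(\lambda)} = \beta_n^-(\lambda)$ at real $\lambda$ and $\lambda_n^* \in [\lambda_n^-,\lambda_n^+] \subset \mathbb{R}$, this yields $\gamma_n = 2\,|\beta_n^+(\lambda_n^*)|$.

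The crux is then the expansion, valid for $\lambda \in D_n$,
\[
\beta_n^+(\lambda) = V(2n) + \sum_{m \neq \pm n} \frac{V(n-m)\,V(m+n)}{\lambda - m^2} + \rho_n(\lambda),
\]
combined with the identity (Proposition~\ref{prop01}) $V(n-m)\,V(m+n) = -(n-m)(m+n)\,q(n-m)\,q(m+n) = -(n^2-m^2)\,q(n-m)\,q(m+n)$, so the apparent growth of the middle term is killed by its denominator up to the factor $(n^2-m^2)/(m^2-\lambda)$, which is $O(1)$ on $\partial D_n$; the remainder $\rho_n$ is dominated by a geometric series in $\|K_\lambda V K_\lambda\|_{HS}$, which tends to $0$ by Lemma~\ref{lem3.3}. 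Estimating the resulting sums uniformly on $\partial D_n$ by means of Lemma~\ref{lem3.1} and Lemma~\ref{lem3.2}, exactly as the $L^2$-case sums are organized in \cite{DM15}, one obtains a two-sided estimate
\[
\bigl|\,\gamma_n - 2\,|V(2n)|\,\bigr| \le \varepsilon_n, \qquad \varepsilon_n \le C\,\bigl(|q|\ast|q|\bigr)(2n) + o(1)\,|V(2n)|,
\]
where $|q|\ast|q|$ is the convolution of the sequence $(|q(k)|)_{k \in 2\mathbb{Z}}$ with itself; since $q \in \ell^2$ (because $v \in H^{-1}_{loc}(\mathbb{R})$), the sequence $(\varepsilon_n)$ is a priori in $\ell^2(\mathbb{N})$.

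Granting the two-sided estimate, (\ref{33.002}) follows by a weighted summation and a fixed-point step that use only the sub-multiplicativity $\omega(n) \le \omega(n-m)\,\omega(m)$ and (\ref{33.001}): $\ell^1(\omega)$ is a Banach algebra under convolution, and $\ell^2(\mathbb{N},\Omega)$ with $\Omega(n) = \omega(n)/n$ is a module over it, so a convolution of an $\ell^2(\mathbb{N},\Omega)$ sequence with an $\ell^2(\mathbb{N})$ sequence returns to $\ell^2(\mathbb{N},\Omega)$. Feeding $\gamma \in \ell^2(\mathbb{N},\Omega)$ into the estimate and improving the a priori information $q \in \ell^2$ step by step (or in one stroke via a contraction argument) therefore gives $\bigl(|V(2n)|\bigr)_n \in \ell^2(\mathbb{N},\Omega)$, i.e. $\sum_n |q(2n)|^2\,\omega(n)^2 < \infty$; running the same for $bc = Per^-$ recovers the coefficients $q(k)$ with $k \in 2+4\mathbb{Z}$, so together with $bc = Per^+$ all Fourier coefficients of $Q$ are controlled, which is precisely $v \in H(\Omega)$. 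For (\ref{33.003}): when $\Omega$ is sub-multiplicative of exponential type, so that $\omega(n) \sim e^{cn}$, the same bootstrap still converges, but the algebra and module norms lose a fixed fraction of the exponential rate at each application, so one concludes only $|q(n)| = O(e^{-\varepsilon|n|})$ for some $0 < \varepsilon < c$, i.e. $v \in H(e^{\varepsilon|n|})$.

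The step I expect to be the main obstacle is precisely the systematic tracking of the factor $j-m$ which is absent in the classical $L^2$ setting: it has to be verified at every stage --- in the Lyapunov--Schmidt reduction, in the series for $\rho_n$, and in the weighted-convolution lemma --- that this factor is exactly absorbed by a denominator $\lambda - m^2$ or a difference $n^2 - m^2$, so that no genuine growth in $n$ survives, and that all the constants can be taken uniform for $\lambda$ on the circles $|\lambda - n^2| = r_n$; here Lemma~\ref{lem3.2} is indispensable. Once these estimates are brought into line with the $L^2$-case bookkeeping of \cite{DM15}, the passage to weighted spaces and the fixed-point step are formal and identical to the regular case.
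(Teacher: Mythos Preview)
The paper does not actually prove Theorem~\ref{thm33.1}: it is stated in Section~6 (``Conclusion'') explicitly without proof, with the sentence ``Still, there is a lot of technical problems; we present all the details elsewhere.'' The theorem is announced as a strengthening of Theorems~54 and~67 in \cite{DM15}, obtainable by applying the machinery of \cite{KM2,DM3,DM5,DM15} once the Fourier representations of Sections~3--5 are in place.

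Your proposal is therefore not competing with a proof in the paper but rather sketching what the paper itself indicates should be done. Your outline---Lyapunov--Schmidt reduction on the two-dimensional block at $\pm n$, the resolvent expansion (\ref{3.23}) with $K_\lambda$, the formula $\gamma_n = 2|\beta_n^+(\lambda_n^*)|$ in the self-adjoint case, the cancellation $V(n-m)V(m+n)=-(n^2-m^2)q(n-m)q(m+n)$ that absorbs the extra $H^{-1}$ factor into the denominator, and the sub-multiplicative bootstrap---is exactly the strategy the authors have in mind, and your identification of the main obstacle (uniform absorption of the $j-m$ factor at every stage, controlled by Lemmas~\ref{lem3.1}--\ref{lem3.2}) matches the paper's own warning that ``there is a lot of technical problems.'' As a high-level plan your sketch is correct and faithful to the intended approach; what remains is precisely the detailed verification the authors defer to a separate paper.
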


The following theorem summarizes our results about the
Hill--Schr\"odinger operator with complex--valued  potentials $v
\in H^{-1}. $

\begin{Theorem}
\label{thm44.2}  Let $L = L^0 + v(x) $ be the Hill--Schr\"odinger
operator with a $\pi$--periodic potential
$v \in H^{-1}_{loc} (\mathbb{R}).$

Then, for large enough $n > N(v)$  the operator $L$ has, in a disc
of center $n^2 $ and radius $r_n = n/4, $ exactly two (counted
with their algebraic multiplicity) periodic (for even $n$), or
antiperiodic (for odd $n$)   eigenvalues $\lambda^+_n $ and $
\lambda^-_n, $ and one Dirichlet eigenvalue $\mu_n. $

Let
\begin{equation}
\label{44.3} \Delta_n = |\lambda^+_n - \lambda^-_n | +
|\lambda^+_n - \mu_n|, \quad n > N (v);
\end{equation}
then, for each sub-multiplicative weight $\omega $ and $$ \Omega =
(\Omega (n)), \quad  \Omega (n)= \frac{\omega (n)}{n}, $$ we have
\begin{equation}
\label{44.4} v \in H(\Omega )  \; \Rightarrow \;  (\Delta_n)  \in
\ell^2 (\Omega ).
\end{equation}

Conversely, in the above notations, if  $\omega = (\omega
(n))_{n\in \mathbb{Z}} $ is a sub--multiplicative weight such that
\begin{equation}
\label{44.5}  \frac{\log \omega (n)}{n} \searrow 0 \quad \text{as}
\quad n \to \infty,
\end{equation}
then
\begin{equation}
\label{44.6}  (\Delta_n) \in \ell^2 (\Omega) \;  \Rightarrow  \; v
\in H(\Omega ).
\end{equation}

If $\omega $ is a sub--multiplicative weight of exponential type,
i.e.,
\begin{equation}
\label{44.7} \lim_{n\to \infty}  \frac{\log \omega (n)}{n} >0
\end{equation}
then
\begin{equation}
\label{44.8} (\Delta_n) \in \ell^2 (\Omega) \; \Rightarrow   \;
\exists  \varepsilon >0: \; v \in H(e^{\varepsilon |n|} ).
\end{equation}
\end{Theorem}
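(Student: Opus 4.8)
The existence-and-counting assertion is a direct consequence of Theorem~\ref{thm3.1}: applying it to $L_{Per^+}$ for even $n$, to $L_{Per^-}$ for odd $n$, and to $L_{Dir}$, one obtains that for all $n$ larger than some $N(v)$ the disc $D_n=\{\lambda:\ |\lambda-n^2|<n/4\}$ contains exactly two periodic or antiperiodic eigenvalues $\lambda_n^\pm$ and exactly one Dirichlet eigenvalue $\mu_n$, each counted with its algebraic multiplicity. Hence the substance of the statement is the pair of implications (\ref{44.4}) and (\ref{44.6})--(\ref{44.8}), and the plan is to transplant to the present $H^{-1}$ setting the Fourier-method analysis of \cite{KM2,DM3,DM5} (surveyed in \cite{DM15}), originally developed under the assumption $v\in L^2([0,\pi])$; the passage is made possible precisely by the Fourier representations established in Theorems~\ref{thm001} and~\ref{thm2.2}, which put all three operators in the uniform shape $L^0+V$ acting on weighted $\ell^2$ spaces.

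The first step is the standard low-dimensional reduction of the eigenvalue problem near $n^2$. For $n$ large and $\lambda$ in the part of $D_n$ where $\|K_\lambda VK_\lambda\|<1$ --- a region which, by Lemmas~\ref{lem3.3} and~\ref{lem3.4}, exhausts $D_n$ outside a disc of radius $o(n)$ around $n^2$ --- the resolvent has the norm-convergent expansion (\ref{3.23}). Integrating it over $\partial D_n$ and restricting to the unperturbed eigenspace at $n^2$ --- one-dimensional, spanned by $\sqrt2\sin nx$, when $bc=Dir$, two-dimensional, spanned by $e^{inx}$ and $e^{-inx}$, when $bc=Per^\pm$ --- one rewrites the characteristic equation as $\lambda-n^2=\alpha^{Dir}_n(\lambda)$ in the Dirichlet case and as $(\lambda-n^2-\alpha_n(\lambda))^2=\beta_n^+(\lambda)\,\beta_n^-(\lambda)$ in the periodic/antiperiodic case, where $\alpha_n$, $\alpha^{Dir}_n$, $\beta_n^\pm$ are given by series whose generic term is a finite product of entries of the matrix $K_\lambda VK_\lambda$ of (\ref{3.29}), (\ref{3.32}). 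In particular $\beta_n^\pm(\lambda)=V(\pm2n)+$ (a series of products of two or more entries), and, crucially, in the $H^{-1}$ case $V(\pm2n)=\pm2in\,q(\pm2n)$, so after dividing by $n$ the principal part of $\beta_n^\pm$ is $q(\pm2n)$.

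For the direct implication (\ref{44.4}): from the reduced equations $\gamma_n=\lambda_n^+-\lambda_n^-$ equals, up to bounded factors, $\sqrt{|\beta_n^+(\lambda)\beta_n^-(\lambda)|}$ at $\lambda$ near $n^2$, and $\mu_n-\tfrac12(\lambda_n^++\lambda_n^-)$ is dominated by $|\beta_n^+|+|\beta_n^-|$ together with a Dirichlet term of the same order; hence $\Delta_n\le C\,\bigl(|\beta_n^+|+|\beta_n^-|\bigr)$, the functionals taken near $\lambda=n^2$. Now $v\in H(\Omega)$ with $\Omega(n)=\omega(n)/n$ means $\sum_n|V(n)|^2\Omega(n)^2<\infty$, that is $\sum_n|q(n)|^2\omega(n)^2<\infty$ (equivalently $Q\in H(\omega)$); the contribution of the principal part $q(2n)$ to $\Omega(n)\Delta_n$ is then an $\ell^2$ sequence outright, while each higher-order term is, by the Hilbert--Schmidt bounds of Lemmas~\ref{lem3.3}--\ref{lem3.4} and the decay estimates of Lemma~\ref{lem3.2}, a convolution-type expression in $(|q(k)|)_k$ which, thanks to the sub-multiplicativity $\omega(j+k)\le\omega(j)\omega(k)$, again lies in $\ell^2(\omega)$ and carries an extra factor tending to $0$. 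Summation over $n$ then gives $(\Delta_n)\in\ell^2(\Omega)$.

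The converse (\ref{44.6})--(\ref{44.8}) is the \emph{main obstacle}. What one needs is the opposite estimate $|q(2n)|\le C\,(\Delta_n+\varepsilon_n)$ with $(\varepsilon_n)$ a controllable ``tail'', and the difficulty is that a priori a correction term in $\beta_n^\pm$ might be comparable to, and cancel, the principal term $V(2n)$. This is resolved, exactly as in the proofs of Theorems~54 and~67 in \cite{DM15}, by a bootstrap: one starts from the crude decay that the $L^2$-theory already yields ($\Delta_n\to0$, in fact $\sum\Delta_n^2<\infty$) and iterates, at each round using the a priori monotonicity (\ref{44.5}) together with sub-multiplicativity to upgrade the weight under which $q$ --- equivalently $v$ --- is known to lie; the monotone-decay hypothesis on $\log\omega(n)/n$ is precisely what makes the iteration converge and deliver $v\in H(\Omega)$. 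When $\omega$ is of exponential type, (\ref{44.7}), the same scheme cannot refine the weight past a geometric rate, but it does produce analyticity of $v$ in a strip of some positive width $\varepsilon$, hence (\ref{44.8}). I expect the principal difficulty to be purely technical: tracking, through all the correction series, the extra factor $m$ carried by $V(m)=imq(m)$ in the $H^{-1}$ case, and checking that the estimates of Section~5 remain strong enough for the bootstrap to close --- no conceptually new ingredient beyond \cite{DM15} should be required.
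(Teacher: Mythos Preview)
Your outline matches the strategy the paper indicates, but there is nothing to compare it against: the paper does not prove Theorem~\ref{thm44.2}. In the Introduction the authors say explicitly that ``our results in that direction are presented without proofs in Section~6,'' and in Section~6 they repeat that ``we present all the details elsewhere,'' offering Theorems~\ref{thm33.1} and~\ref{thm44.2} only as announcements of ``stronger versions of Theorems~54 and~67 in \cite{DM15}.'' The sole argued ingredient is the counting of eigenvalues in $D_n$, which is indeed Theorem~\ref{thm3.1}, exactly as you cite it.

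Your sketch---reduce via Theorems~\ref{thm001} and~\ref{thm2.2} to the $\ell^2$ operators, use the resolvent expansion (\ref{3.23}) and the Hilbert--Schmidt control of Lemmas~\ref{lem3.3}--\ref{lem3.4} to set up the $2\times2$ and $1\times1$ reductions with functionals $\alpha_n,\beta_n^\pm$, then run the direct estimate and the bootstrap converse of \cite{DM15} with the adjustment $V(m)=imq(m)$---is precisely the methodology the authors point to. So your proposal is not so much an alternative proof as a fleshing-out of the program the paper defers; the paper itself supplies no proof with which it could differ.
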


2. Throughout  the paper and in Theorems \ref{thm33.1} and
\ref{thm44.2} we consider three types of boundary conditions:
$Per^\pm $ and $Dir $ in the form ($a^*$) , ($b^*$)
 and ($c^* \equiv c$) adjusted to the differential operators
 (\ref{0.1}) with {\em singular potentials}  $ v \in H^{-1}.$
 It is worth to observe that if $v$ happens to be
 a regular potential, i.e., $v \in L^2 ([0,\pi])$
 (or even $v \in H^\alpha, \, \alpha > -1/2 $)
 the boundary conditions  ($a^*$) and  ($b^*$)
{\em automatically } become equivalent to the boundary conditions
($a$) and ($b$) as we used to write them in the regular case.
Indeed (see the paragraph after (\ref{cor})), we have

($a^*$) $\quad   Per^+: \quad y(\pi)= y(0), \;\left ( y^\prime -
Qy \right ) (\pi)= \left ( y^\prime - Qy  \right ) (0). $

Therefore, with $ v \in L^2, $ both the $L^2$--function $Q$ and the
quasi--derivative $u =  y^\prime - Q y $ are continuous functions,
so the two terms $y^\prime $ and $Q y $  can be considered
separately. Then the second condition in ($a^*$)  can be rewritten
as
\begin{equation}
\label{6.21}
y^\prime (\pi) - y^\prime (0) = Q(\pi) y(\pi) - Q(0) y(0).
\end{equation}
But, since $Q$ is $\pi$--periodic (see Proposition~\ref{prop01}),
\begin{equation}
\label{6.22}
Q(\pi) = Q(0),
\end{equation}
and with the first condition in ($a^*$)
the right side of (\ref{6.21}) is
$Q(0) (y(\pi) - y(0) ) =0. $
Therefore, ($a^*$) comes to the form

($a$)  $\quad y(\pi) = y(0), \quad y^\prime (\pi) = y^\prime (0).
$

Of course, in the same way the condition ($b^*$) {\em
automatically} becomes equivalent to ($b$) if $v \in H^\alpha, \;
\alpha
> -1/2.$

A.Savchuk and A. Shkalikov checked (\cite{SS03}), Theorem 1.5)
which boundary conditions in terms of a function $y$ and its
quasi--derivative $u = y^\prime - Qy$ are regular by
Birkhoff--Tamarkin. Not all of them are reduced to some canonical
boundary conditions in the case of $L^2$--potentials; the result
could depend on the value of $Q(0).$ For example,
Dirichlet--Neumann   bc $$ y(0) =0, \quad  (y^\prime - Q y)(\pi)
=0 $$ would became $$ y(0) =0, \quad y^\prime (\pi) = Q(\pi) \cdot
y(\pi). $$ Of course, one can adjust $Q$ in advance by choosing
(as it is done in \cite{SS05}) $$ Q(x) = - \int_x^\pi v(t) dt
\quad \text{if} \;\; v \in L^2. $$ But this choice is not good if
Dirichlet--Neumann   bc is written with changed roles of the end
points, i.e., $$ (y^\prime - Q y)(0) =0, \quad y(\pi) =0. $$ We
want to restrict ourselves to such boundary conditions with $v \in
H^{-1} $ that if by chance $v \in L^2 $ then the reduced boundary
conditions do not depend on $Q(0).$

We consider as {\em good} self--adjoint bc
only the following ones:
$$ Dir: \quad  y(0) =0, \quad y(\pi) = 0$$
and
$$ y(\pi) = e^{i\theta}  y(0)$$
$$(y^\prime - Q y)(\pi)  =e^{i\theta}
(y^\prime - Q y)(0) + B e^{i\theta} y(0), $$
where $ \theta \in [0, 2\pi) $ and $B $ is real.

Observations of this subsection are quite elementary but they
would be important if we would try to extend statements like
Theorem~\ref{thm44.2} by finding other troikas of boundary
conditions (and corresponding troikas of eigenvalues like
$\{\lambda^+, \lambda^-,   \mu\})$ and using these spectral
triangles and the decay rates of their diameters to characterize a
smoothness of potentials $v$ with a priori assumption $ v \in
H^{-1} $ (or even $v   \in L^2 ([0, \pi]).$

\end{document}